\newcounter{bobcomments}
\newtheorem{theorem}{Theorem}[section]
\newtheorem{lem}[theorem]{Lemma}
\newtheorem{proposition}[theorem]{Proposition}
\newtheorem{cor}[theorem]{Corollary}
\newtheorem{qu}[theorem]{Question}
\theoremstyle{definition}
\newtheorem{dfn}[theorem]{Definition}
\newtheorem{ex}[theorem]{Example}
\newtheorem{rmk}[theorem]{Remark}
\newtheorem{ntn}[theorem]{Notation}
\numberwithin{theorem}{section}
\DeclareMathOperator{\Aut}{Aut}
\newcommand{\Z}{\mathbb{Z}}
\title[Equations in Seifert 3-manifold groups]{
Decidability of equations and first-order theory \\ in  Seifert 3-manifold groups }
\thanks{The research of the authors was supported by the EPSRC Fellowship grant EP/V032003/1 ‘Algorithmic, topological and geometric aspects of infinite groups, monoids and inverse semigroups’. }
\begin{document}
\maketitle

\vspace{-4mm}

\begin{center}
ROBERT D. GRAY
\footnote{School of Engineering, Mathematics, and Physics, University of East Anglia, Norwich NR4 7TJ, England.
Email \texttt{Robert.D.Gray@uea.ac.uk}.
}
and
ALEX LEVINE
\footnote{School of Engineering, Mathematics, and Physics, University of East Anglia, Norwich NR4 7TJ, England.
Email \texttt{A.Levine@uea.ac.uk}.
}

\end{center}

\begin{abstract}
In   
\cite{Aschenbrenner2015}*{Question 5.2 \& Question 5.3}
Aschenbrenner, Friedl and Wilton
ask: 
(1) Is the equation problem solvable for the fundamental group of any $3$-manifold? and 
(2) Is the first-order theory of the fundamental group of any $3$-manifold  decidable? 
In this paper we answer both of these questions by proving that Hilbert's tenth problem over the integers can be encoded in equations over any non-virtually abelian fundamental group of any Seifert fibered 3-manifold whose orbifold has non-negative Euler characteristic.
We use this to show that the equation problem (and hence also the first-order theory) is undecidable in this infinite family of $3$-manifold groups and then apply it to classify the Seifert 3-manifold groups with decidable equation problems and decidable first-order theories, in the case that the orbifold has non-negative Euler characteristic. 
In contrast, we show that for this class of Seifert 3-manifold groups the single equation problem is decidable. 
For every Seifert 3-manifold group $G$ 
where the orbifold has negative Euler characteristic
we show that either $G$ has decidable equation problem or $G$ has a finite index subgroup of index $2$ that has decidable equation problem. 
These negative Euler characteristic results follow from work of Liang on central extensions of hyperbolic groups. 
We also discuss why Liang's results do not suffice to deal with all the negative Euler characteristic cases.     
We show how to construct several other infinite families of $3$-manifold groups with undecidable equation problem (and hence also undecidable first-order theory) including examples that are not Seifert manifold groups and examples that are not virtually nilpotent. 
In addition, we observe that there are numerous other infinite families for which the first-order theory is undecidable such as  
fundamental groups of manifolds modeled on 3-dimensional Sol geometry.
\end{abstract}

\section{Introduction}\label{sec:intro}
Fundamental groups of compact $3$-manifolds enjoy many good algorithmic
properties e.g. they all have decidable word problem, conjugacy problem and
subgroup membership problem; see \cite{Hempel1987}, \cite{Pr06}, \cite{Pr16},
\cite{Friedl2016}, and \cite{Aschenbrenner2015}*{Section~4}. Furthermore, the
isomorphism problem is decidable for the case of closed, orientable
3–manifolds, and the homeomorphism problem is decidable for orientable,
irreducible 3–manifolds; see \cite{Sela1995} and
\cite{Aschenbrenner2015}*{Section~4}. These results are discussed in detail in
the survey article of Aschenbrenner, Friedl and Wilton \cite{Aschenbrenner2015}
on decision problems for $3$-manifolds and their fundamental groups. At the end
of that paper 
the authors state several open problems and conjectures.
Specifically they ask: 

\begin{qu}\label{AFWOpen1}\cite{Aschenbrenner2015}*{Question 5.2} Is the equation problem solvable for the fundamental group of any 3–manifold? \end{qu}

\vspace{-4mm}

Here the \emph{equation problem}, also called the \emph{Diophantine problem}, which generalises both the word problem and the conjugacy problem, asks for a solution to the problem whether any finite set of ``equations'' over a group has a solution; see below for a formal definition. More generally, they pose the following question which asks about decidability of the full elementary theory of 3–manifold groups viewed as a structures in the language of groups in the model-theoretic sense.

\begin{qu}\label{AFWOpen1}\cite{Aschenbrenner2015}*{Question 5.3} Let $\pi$ be a $3$-manifold group. Is the first-order theory of $\pi$ decidable? \end{qu}

\vspace{-4mm}

The main purpose of this paper is to answer both of these questions by proving that Hilbert's tenth problem over the integers can be encoded in equations over any non-virtually abelian fundamental group of any Seifert fibered 3-manifold whose orbifold has non-negative Euler characteristic. This allows us to give an explicit classification of the Seifert fibered 3-manifolds with decidable equation problems, and those with decidable first-order theories, in this case. For the other case of Seifert 3-manifold groups where the orbifold has negative Euler characteristic we shall explain how results of Liang \cite{Liang2014} can be applied to give an up to index $2$ classification of those Seifert 3-manifold groups with decidable equation problems. 
We shall also explain and clarify the reasons why Liang's results cannot immediately be applied to deal with all the negative characteristic cases, and how this relates to the open problem of whether decidability the equation problem passes to finite index extensions.

In more detail, our first main result is the following (see Section~\ref{sec:prelim} below for the formal definitions of the terms used in the statement). 
Note that in the statement of the following result, and throughout the rest of this paper, we shall opt to use the expression \emph{Diophantine problem} rather than \emph{equation problem} since it has become the more commonly used term used for this decision problem in the literature.  

\vspace{1.5mm}

\noindent \textbf{Theorem~A.} 
\emph{
Let $M$ be a Seifert fibered manifold with base orbifold $B$ and set $G=\pi_1(M)$. 
\begin{enumerate} 
\item If $\chi(B) \geq 0$ then either 
\begin{enumerate} 
\item $G$ is not virtually abelian in which case Hilbert's tenth problem over $\Z$  is reducible to the Diophantine problem in $G$. In particular $G$ has an undecidable Diophantine problem and thus also has an undecidable first-order theory; or else
\item $G$ is virtually abelian and thus has a decidable first-order theory with ETD0L solutions. 
In particular $G$ has a decidable Diophantine problem with EDT0L solutions.  
  \end{enumerate}
\item If $\chi(B) < 0$ then either $G$ has a decidable Diophantine problem or $G$ contains an index $2$ subgroup that has a decidable Diophantine problem.     
  \end{enumerate}
} 

\vspace{1.5mm}

The full statement of Theorem~A is given in Theorem~\ref{Seifert-DP} below where we precisely characterise the cases where these problems are and are not decidable in terms of the parameters in the finite presentations defining the group. In particular it can be read off from Theorem~\ref{Seifert-DP} that there are several infinite families of examples of Seifert $3$-manifold groups for which the Diophantine problem is undecidable.      

Part (2) of Theorem~A is obtained as a corollary of a result of Liang \cite{Liang2014}. 
We conjecture that in case (2) of Theorem~A the group $G$ will always have decidable Diophantine problem. 
We note that in Liang's original paper \cite{Liang2014} he pointed out that his main result in that paper could be applied to certain Seifert $3$-manifold groups. Indeed Liang's result concerns central extensions of hyperbolic groups and as Liang points out in his paper \cite{Liang2014} the fundamental groups of a large and interesting class of Seifert fibered spaces are of this form. Part (2) of Theorem~A and Theorem~\ref{Seifert-DP} clarify the exact extent to which Liang's result on central extensions may be applied to prove decidability of the Diophantine problem for certain Seifert $3$-manifold groups.  
In particular, we see that Liang's result is not enough on its own to deal with all the negative characteristic examples.

When we restrict to single equations we shall show.  

\vspace{1.5mm}

\noindent \textbf{Theorem~B.} 
\emph{
Let $M$ be a Seifert fibered manifold with base orbifold $B$.
\begin{enumerate} 
\item If $\chi(B) \geq  0$ then the Seifert manifold group $G=\pi_1(M)$ has a decidable single equation problem. 
\item If $\chi(B) <  0$ then either $\pi_1(M)$ has a decidable single equation problem or it has an index 2 subgroup that does.   
  \end{enumerate}
} 

\vspace{1.5mm}

\vspace{1.5mm}

\noindent \textbf{Corollary~C.} 
\emph{
Every Seifert 3-manifold group $G$ has a subgroup $H \leq G$ with index at most $2$ such that $H$ has a decidable single equation problem.     
} 

\vspace{1.5mm}

We conjecture that all Seifert 3-manifold groups have decidable single equation problem. This also raises the interesting question of whether all $3$-manifold groups might have single equation problem.  

Our main results concern fundamental groups of Seifert fibered manifolds. These manifolds and their fundamental groups are an important class that are key building blocks in general theory of 3-manifold groups. Seifert fibered manifolds are important to the geometrisation theorem because they represent six of the eight Thurston geometries and serve as fundamental building blocks for many 3-manifolds.         
There are a range of algebraic results about residual properties of Seifert $3$-manifold groups that relate to algorithmic properties. For instance in \cite{Scott1978} 
Scott 
showed that Seifert 3-manifold groups are subgroup separable and 
in
\cite{Niblo1992} Niblo generalised that result by proving that they are double coset separable. 
Related work includes   
\cite{Sun2020}
which gives a 
characterisation of which finitely generated subgroups of finitely generated 3-manifold groups are separable. 
Using topological results, in  
\cite{Martino2007} Martino
showed that Seifert groups are conjugacy separable.  
 Allenby et al. give an algebraic proof of the same result in \cite{Allenby2005} and \cite{Allenby2010}.
We note that not all 3-manifold groups are subgroup separable see e.g.
\cite{Burns1987}, 
while in \cite{Hamilton2013} it is shown that 
the fundamental group of any  compact, orientable 3-manifold is conjugacy separable.

For the rest of the introduction we shall explain how the main results of this paper, described above, relate to other existing literature on the Diophantine problem, and first-order theory, of finitely generated groups.  
The study of the Diophantine problem in groups has a long and distinguished
history. In fundamental work Makanin \cites{Makanin_semigroups, Makanin1982}
proved that the Diophantine problem is decidable in free monoids and in free
groups. Building on this, Razborov \cite{Razborov_thesis} developed a powerful
method for describing sets of solutions to systems of equations in free groups
using what are now called Makanin-Razaborov diagrams. This work laid the
foundations that ultimately led to solutions to the Tarski problems
\cites{Kharlampovich2006, Sela2006} on the first-order theory of free groups.
The Diophantine problem has since been shown to be decidable for right-angled
Artin groups \cite{Diekert2001}, hyperbolic groups \cites{Sela, Dahmani2010}
and virtually direct products of hyperbolic groups \cite{CiobanuHoltRees}.
Torsion-free relatively hyperbolic groups have also been studied, with various
examples, such as those with virtually abelian parabolic subgroups having
decidable Diophantine problem \cite{dahmani2009existential}.

When a group has a decidable Diophantine problem it is of interest to study the properties of sets of solutions to systems of equations over that group. A recent focus has been on describing the solution set to a system of equations over groups using formal language theory via, so called, EDT0L languages. In \cite{Ciobanu2016} it was proved that the solution set of a word equation is an EDT0L language and in \cite{EDT0L_RAAGs} it was shown that the set of all solutions to a system of equations over a right-angled Artin group is an EDT0L language.   
Since then several other classes of groups have been shown to have the property that solutions to equations over the group can be represented by EDT0L languages, including hyperbolic groups \cite{eqns_hyp_grps} and virtually abelian groups \cite{VAEP}.

In the case that the orbifold has non-negative characteristic, our results on
the Diophantine problem for $3$-manifold groups connects with results in the
literature on solving equations over the Heisenberg group and other nilpotent
groups. Roman'kov \cite{Romankov1979} gave the first example of a nilpotent
group with undecidable Diophantine problem. Later Truss \cite{Truss1995} proved
that the free nilpotent group $N(3,2)$ of step $3$ and rank $2$ has undecidable
Diophantine problem. Some non-nilpotent undecidability results include free 
metabelian groups \cite{RomankovMetabelian}, the
restricted wreath product \(\Z \wr \Z\) \cite{Dong2025}, some other wreath
products of abelian groups \cite{KharlampovichMyasnikov25} and Thompson's group
\(F\) \cite{ElliottLevine}. The connection with the work in this paper comes
from the fact that the Seifert $3$-manifold groups with undecidable Diophantine
problem described in this paper include in particular the Heisenberg group
$H(\Z)$ which is isomorphic to the free nilpotent group $N(2, 2)$. The
Heisenberg group was shown to have undecidable Diophantine problem in
\cite{duchin_liang_shapiro} as part of a more general result showing that all
free nilpotent groups have undecidable Diophantine problem.  In contrast, in
the same paper \cite{duchin_liang_shapiro} they prove that the single equation
problem is decidable in the Heisenberg group.  Seifert 
$3$-manifold groups are typically not nilpotent groups, but if $M$ is a
Seifert fibered manifold with base orbifold $B$ such that $\chi(B)=0$, then
$\pi_1(M)$ is a virtually nilpotent group; in fact it is virtually class
$2$ nilpotent  (see Proposition~\ref{euclidean-Seifert-virt-nilp} below for a
proof of this).       

We obtain the main undecidability results in this paper by proving that there
are explicit encodings of Hilbert's tenth problem over the integers into every
non-virtually abelian fundamental group of any Seifert fibered 3-manifold whose
orbifold has non-negative Euler characteristic. In contrast, our positive
solution to the single equation problem arises from the fact (see
Proposition~\ref{euclidean-Seifert-virt-nilp} below) that all such groups are
virtually nilpotent. The result \cite{Garreta2020}*{Corollary~4.29} shows that
if $G$ is a finitely generated virtually nilpotent group that is not virtually
abelian then there exists a ring of algebraic integers $O$ e-interpretable in
$G$, and the  Diophantine problem in $O$ is reducible to the Diophantine
problem in $G$. Two very recent preprints \cite{koymans2024hilbert} and
\cite{alpoge2025rank} state results which would imply that these
generalisations of Hilbert's problem are also undecidable. Thus one consequence
of the results claimed in the two preprints \cites{koymans2024hilbert,
alpoge2025rank} combined with \cite{Garreta2020}*{Corollary~4.29} is that it
would give an alternative way of obtaining the corollary to Theorem~A stating
that the Diophantine problem is undecidable in non-virtually abelian Seifert
manifold groups in the case of non-negative orbifold characteristic. Note
however that that approach would not suffice to recover Theorem~A itself nor
recover the full statement of that result in Theorem~\ref{Seifert-DP}.

Taken together our results give an, up to index 2, classification of Seifert 3-manifold groups with decidable Diophantine problems. Note that it is an open question whether in general decidability of the Diophantine problem is preserved under finite index extensions or subgroups. Even for virtual RAAGs this question remains open.
The results in this paper are a step towards the ultimate goal of classifying the $3$-manifold groups with decidable Diophantine problems and those with decidable first-order theories. At the end of the paper in Section~\ref{sec:other} below we discuss what we currently know regarding these questions for some other $3$-manifold groups. As explained e.g. in \cite[Theorem 1.8.1]{AschenbrennerBook2015} Seifert fibered spaces account for all compact oriented manifolds in six of the eight Thurston geometries. It is natural to ask what we can say for the fundamental groups for the other two geometries. Some properties of these groups are summarised in \cite[Table~1.1]{AschenbrennerBook2015}. Of the remaining two geometries, a natural family worth considering is fundamental  groups of manifolds modeled on 3-dimensional Sol geometry. We do not know whether such groups have decidable Diophantine problem. However, as we shall explain in Section~\ref{sec:other}, they do have an undecidable first-order theory.    
In the same section we shall also show how to construct several other infinite families of $3$-manifold groups with undecidable Diophantine problem including examples that are not Seifert manifold groups and examples that are not virtually nilpotent.

\section{Preliminaries}
\label{sec:prelim}

\subsection{Notation}
We recall some standard concepts and clarify certain notations that we will use
throughout.

We use $FG(A)$ to denote the free group on alphabet $A$. If \(G\) is a group
and \(x, y \in G\) we define \([x, y] = xyx^{-1}y^{-1}\). The
\emph{centraliser} \(C_G(x)\) of an element \(x\) in a group \(G\) is defined
by \(C_G(x) = \{y \in G : xy = yx\}\).  If $G$ is a group, $N$ is a normal
subgroup of $G$ and $S$ is any subset of $G$ then we define \( S / N = \{ Nx : x
\in S \} \); that is, the set of all cosets of $N$ that $S$ intersects. Note
that there is no assumption that $N$ is contained in the set $S$ here.
When working with a group $G$ defined by a presentation we write $u \equiv v$
to say $u$ and $v$ are equal as words, and $u=v$ to mean they represent the
same element of the group. If multiple groups are present, we may write $u
=_G v$ to clarify that we mean they are equal in the group $G$.

\subsection{Equations and first-order theory of groups}

	We now define equations and the Diophantine problem in a group.

	\begin{dfn}
			Let \(G\) be a finitely generated group and \(\mathcal X\) be a
			finite set. An \emph{equation} in \(G\) is an element \(w \in G
			\ast FG(\mathcal X)\) and denoted \(w = 1\). A \emph{solution} to
			\(w = 1\) is a homomorphism \(\phi \colon G \ast FG(\mathcal X) \to
			G\) which fixes \(G\) pointwise, such that \((w) \phi = 1\). We
			refer to the elements of \(\mathcal X\) as the \emph{variables} of
			the equation. A \emph{(finite) system of equations} in \(G\) is a
			finite set of equations in \(G\) using the same set of variables. A
			\emph{solution} to a system \(\mathcal E\) of equations is a
			homomorphism that is a solution to every equation in \(\mathcal
			E\). If \(\mathcal E\) is a system, when writing out \(\mathcal E\),
			we usually use the logical operator \(\wedge\) to delimit equations
			in the system; that is \(\mathcal E = (w_1 = 1) \wedge (w_2 = 1)
			\wedge \cdots \wedge (w_n = 1)\).

	The \emph{Diophantine problem} in \(G\) is the decision question asking if
	there is an algorithm taking a system \(\mathcal E\) of equations in \(G\)
	as input and outputting whether or not \(\mathcal E\) admits a solution. The
	\emph{single equation problem} in \(G\) is the decision question asking if
	there is an algorithm that takes a single equation \(w = 1\) in \(G\)
	and outputs if it admits a solution.
\end{dfn}
    
		We will often abuse notation, and consider a solution to an equation
		with \(n \in \mathbb{Z}_{>0}\) variables \(X_1, \ldots, X_n\) in a
		group \(G\) to be an \(n\)-tuple of elements \((g_1, \ \ldots, \
		g_n)\), instead of a homomorphism from \(G \ast FG(X_1, \ \ldots, \
		X_n) \to G\). One can recover a homomorphism \(\phi\) from a 
		tuple by setting \(\phi\) to be the unique homomorphism satisfying 
		\((g) \phi = g\) for all \(g \in G\) and \((X_i) \phi = g_i\) for
		all \(i\). We
		further abuse notation by considering equations of the form
		\(w_1 = w_2\). Such an equation is used to denote the equation 
		\(w_1w_2^{-1} = 1\).

	We now define various technical terms relating to equations that will be
	frequently used throughout our proofs.
	\begin{dfn}
		Let \(G\) be a finitely generated group. If \(\mathcal E\) is a
		system of equations in \(G\) and \((X_1, \ldots, X_n)\) are
		a collection of (not necessarily all) variables occurring in
		\(\mathcal E\), we define the set of solutions to
		\((X_1, \ldots, X_n)\) in \(\mathcal E\) to be set
		\[
		\{(g_1, \ldots, g_n) : \textrm{ there exist } h_1, \ldots, h_m
				\in G \textrm{ such that } (g_1, \ldots, g_n, h_1, \ldots, h_m)
\text{ is a solution to } \mathcal E\}.\]

		Let \(S \subseteq G^n\), where \(n\) is a
		positive integer. We say \(S\) is \emph{equationally definable} in
		\(G\) if there is a system \(\mathcal E\) of equations in \(G\) with
		\(m \geq n\) variables, where there are \(n\) variables
		\((X_1, \ldots, X_n)\) in \(\mathcal E\), such that the set of
		solutions to \((X_1, \ldots, X_n)\) in \(\mathcal E\) is \(S\).
	\end{dfn}
We can combine equationally definable sets to make new equationally definable sets. For instance, if a subset $S \subseteq G$ of a group $G$ is equationally definable then the set of squares $X = \{ x^2 : x \in S \}$ is clearly also equationally definable. Note that the set of squares $X$ is not the same thing as the square $S^2$ of the set $S$.        

		We provide some straightforward examples of equations
		in an abelian group.

		\begin{ex}
			Let \(G = \langle a, b, c \mid ab = ba, ac = ca, bc = cb, c^4 = 1
			\rangle \cong \Z^2 \times C_4\). Consider the equation \(X^2 = 1\)
			in \(G\). It is not difficult to see that the only element of order
			\(2\) in \(G\) is \(c^2\), and thus the set of solutions to \(X^2 =
			1\), where \(X\) is a variable, is simply \(\{c^2\}\). Similarly,
			the system of equations \((Y = Xa) \wedge (Z = X^4)\) has the
			set of solutions \(\{(a^i b^j c^k, a^{i + 1} b^j c^k, a^{4i} b^{4j})
			: i, j \in \Z, k \in \{0, 1, 2, 3\}\}\).
		\end{ex}

	\begin{dfn}
		Let \(\mathcal Q_1\) and \(\mathcal Q_2\) be decision problems (for
		example, the Diophantine problem in a group). We say \(\mathcal Q_1\)
		is \emph{reducible} to \(\mathcal Q_2\) if an there is an algorithm
		that takes as input an instance \(I_1\) of \(\mathcal Q_1\) and outputs
		an instance \(I_2\) of \(\mathcal Q_2\) such that \(I_1\) is true if
		and only if \(I_2\) is true.
	\end{dfn}

	An important use of reducing one decision problem to another is the fact
	that if \(\mathcal Q_1\) reduces to \(\mathcal Q_2\) and \(Q_2\) is
	decidable, then so is \(\mathcal Q_1\). In our case, we frequently use
	the contrapositive of this, by showing that the undecidable Hilbert's
	tenth problem over the ring of integers is reducible to the Diophantine
	problem in certain groups.

	One important generalisation of the Diophantine problem in a group is the
	first-order theory (with constants). Roughly speaking, a \emph{first-order
	sentence (with constants)} in a group is any finite expression that can be
	constructed using variables, constants from the group, any of the logical
	operators \(\forall\), \(\exists\), \(\wedge\), \(\vee\), \(\neg\) and
	\(=\), together with the group's multiplication \(\cdot\).  The
	\emph{first-order theory} in a group (as a decision problem) asks if there
	is an algorithm that decides if a given first-order sentence admits a solution. We refer to \cite{hodges1993model, marker2002model} for a formal introduction to first-order theories of groups.

\subsection{Hilbert's tenth problem}
 As alluded to above, our source of undecidability will be Hilbert's tenth
 problem over the ring of integers. This decision problem takes as input a
 finite system of integer polynomials over the same (finite) set of variables,
 and asks if this system admits a solutions in integers. This was shown to be
 undecidable by Matijasevi\v{c} \cite{Matijasevic1970}, and has since been
 applied to show that the Diophantine problem is undecidable in various
 different groups.

\subsection{Equations in groups and EDT0L languages}

	A recent development in the study of equations in groups is the use of
	EDT0L languages to describe sets of solutions. None of the proofs we give
	use this definition directly - we simply cite existing literature to
	conclude that certain groups have solutions that can be described using
	EDT0L languages.  We thus omit the definition, and refer the reader to a
	survey on the use of EDT0L languages to describe equations
	\cite{CiobanuLevine} and a textbook giving a more detailed study into EDT0L
	languages as an abstract class \cite{math_theory_L_systems}. We say a group
	has a \emph{Diophantine problem with EDT0L solutions} if the set of
	solutions can be described using an EDT0L language, as defined formally in
	\cite{CiobanuLevine}.  Ciobanu and Evetts have also used EDT0L languages to
	describe solutions to first-order sentences in virtually abelian groups
	\cite{CiobanuEvetts}, and so we analogously define the term
	\emph{first-order theory with EDT0L solutions}.

\begin{rmk}\label{rmk:virtuallyAbelianEDT0L} 
When we say a group has \emph{decidable Diophantine problem with EDT0L solutions} this means that the Diophantine problem is decidable and the set of solutions to any finite system of equations has a description as an EDT0L language. Note that this does not necessarily mean that there is an algorithm that takes a finite system of equations over the group and returns an EDT0L language describing the solution set. The same comment applies when we talk about a group having \emph{decidable first-order theory with ETD0L solutions}.    
Combining the results in the papers  
\cite{CiobanuEvetts, Ersov1972} 
it follows that virtually abelian groups have decidable first-order theory with ETD0L solutions.
 \end{rmk}

\subsection{Seifert 3-manifold groups}
Seifert fibered spaces are a fundamental class of 3-manifolds that were
originally introduced by Seifert in 1933; see \cite{Seifert1933}. The definition was
later extended to include singular fibres which reverse orientation and it is
the more general definition that we work with here; see
\cite[Section~3]{Scott1983}. A \emph{Seifert fibered space} is a 3-manifold $M$ with a
decomposition of $M$ into disjoint circles, that are called \emph{fibres}, such
that each circle has a neighbourhood in $M$ which is a union of fibres and is
isomorphic to a fibered solid torus of Klein bottle. Here the \emph{trivial
fibred solid torus} is the product $S^1 \times D^2$ with the product foliation
by circles $S^1 \times \{ y \}, y \in D^2$, and a \emph{fibred solid torus} is
a solid torus with a foliation by circles which is finitely covered by a
trivial fibered solid torus. In a similar way, a \emph{fibred solid Klein
bottle} is a solid Klein bottle that is  finitely covered by a trivial fibered
solid torus.  Given any Seifert fibered space $M$, one can obtain an orbifold
$B$, called the \emph{base orbifold}, by quotienting out by the $S^1$-action on
the fibres of $M$ i.e. by identifying each fiber to a point.

Our interest is in Seifert 3-manifold groups which are the fundamental groups of Seifert fibered spaces. We begin by recalling the classification of Seifert 3-manifold groups and their presentations.

\begin{theorem}[see, for example {\cite{OrlikNotes}*{Section 5.2}}]
				\label{Seifert-pres}
Let \(M\) be a Seifert fibered manifold with  
				base orbifold \(B\).
				Then:
				\begin{enumerate}
								\item If \(B\) is orientable:
\[
 \pi_1(M) \cong {\displaystyle \langle u_{1},v_{1},...u_{g},v_{g},q_{1},...q_{r},h|u_{i}h=h^{\epsilon }u_{i},v_{i}h=h^{\epsilon }v_{i},q_{i}h=hq_{i},q_{j}^{\alpha_{j}}h^{\beta_{j}}=1,q_{1}...q_{r}[u_{1},v_{1}]...[u_{g},v_{g}]=h^{b}\rangle } 
\]
where \(\epsilon \in \{-1, 1\}\), \(\alpha_i, \beta_j \in \Z_{\geq 1}\) are
coprime for each \(j\), and \(b \in
\Z\), such that one of
the following two cases holds:
\begin{enumerate}
\item \emph{Type o1}: \(\epsilon = 1\) and \(0 < \beta_j < \alpha_j\) for all \(j\);
\item \emph{Type o2}: \(g > 0\), \(\epsilon = -1\), \(0 < \beta_j \leq \frac{\alpha_j}{2}\) for
				all \(j\) and \(b \in \{0, 1\}\), unless \(\alpha_j = 2\) for some \(j\),
				in which case \(b = 0\).
\end{enumerate}
\(\epsilon = -1\) being of \emph{type o2};
\item If \(B\) is non-orientable and \(g > 0\): 
\[
 \pi_1(M) \cong {\displaystyle \langle v_{1},...,v_{g},q_{1},...q_{r},h|v_{i}h=h^{\epsilon _{i}}v_{i},q_{i}h=hq_{i},q_{j}^{\alpha_{j}}h^{\beta_{j}}=1,q_{1}...q_{r}v_{1}^{2}...v_{g}^{2}=h^{b}\rangle }
\]
where each \(\epsilon_i \in \{-1, 1\}\) such that one of the following four cases holds:
\begin{enumerate}
\item \emph{Type n1}: \(\epsilon_i = 1\) for all \(i\), \(0 < \beta_j \leq
				\frac{\alpha_j}{2}\) for all \(j\) and \(b \in \{0, 1\}\), unless
				\(\alpha_j = 2\) for some \(j\), in which case \(b = 0\);
\item \emph{Type n2}: \(\epsilon_i = -1\) for all \(i\), and \(0 < \beta_j <
				\alpha_j\) for all \(j\);
\item \emph{Type n3}: \(g \geq 2\), \(\epsilon_1 = 1\) and \(\epsilon_i = -1\) for all \(i >
				1\), \(0 < \beta_j \leq \frac{\alpha_j}{2}\) for all \(j\) and \(b \in
				\{0, 1\}\), unless \(\alpha_j = 2\) for some \(j\), in which case \(b =
				0\);
\item \emph{Type n4}: \(g \geq 3\), \(\epsilon_1 = \epsilon_2 = 1\) and \(\epsilon_i = -1\)
				for all \(i > 2\), \(0 < \beta_j \leq \frac{\alpha_j}{2}\) for all
				\(j\) and \(b \in \{0, 1\}\), unless \(\alpha_j = 2\) for some \(j\),
				in which case \(b = 0\).
\end{enumerate}
\end{enumerate}
\end{theorem}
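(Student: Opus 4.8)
The plan is to compute $\pi_1(M)$ directly from the Seifert fibration and then normalise the resulting data into canonical form. Since $M$ is foliated by circles over the base orbifold $B$, a regular fibre $h$ has homotopy class generating a normal cyclic subgroup (central precisely when $M$ is orientable over an orientable base), and one has the orbifold short exact sequence
\[
1 \to \langle h \rangle \to \pi_1(M) \to \pi_1^{\mathrm{orb}}(B) \to 1 .
\]
The strategy is to lift a standard presentation of $\pi_1^{\mathrm{orb}}(B)$ generator by generator and relator by relator, recording at each step how conjugation by a loop acts on $h$, and to read off the exceptional-fibre relations from the gluing data.

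Concretely, I would first delete small fibred neighbourhoods of the $r$ exceptional fibres, leaving an honest $S^1$-bundle over a compact surface $B_0$ with $r$ boundary circles, orientable of genus $g$ in case (1) and non-orientable of genus $g$ in case (2). A van Kampen computation on this bundle yields the generators $u_i,v_i$ (respectively the cross-cap generators $v_i$) lifting a generating set of $\pi_1(B_0)$, the boundary loops $q_j$, and the fibre $h$; the single long relator $q_1\cdots q_r[u_1,v_1]\cdots[u_g,v_g]=h^b$ (respectively $q_1\cdots q_r v_1^2\cdots v_g^2=h^b$) is the lift of the surface relation, with the exponent $b$ recording the Euler number, that is, the obstruction to a section. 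Re-gluing each fibred solid torus or Klein bottle then imposes the coprime relations $q_j^{\alpha_j}h^{\beta_j}=1$. The conjugation relations come from fibre-orientation monodromy: a loop preserving the orientation of the fibre commutes with $h$, whereas one reversing it conjugates $h$ to $h^{-1}$. The boundary loops $q_j$ always preserve fibre orientation, giving $q_jh=hq_j$, while the surface generators contribute the single sign $\epsilon$ in the orientable-base case (so $\epsilon=1$ exactly when $M$ is orientable) and the independent signs $\epsilon_i$ attached to the cross-cap generators in the non-orientable-base case.

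The remaining and most delicate step is the normalisation that produces the precise type conditions o1, o2 and n1--n4. The Seifert invariants are not unique: one may replace $\beta_j$ by $\beta_j+\alpha_j$ while compensating in $b$, and, whenever the fibre can be reversed, flip $(\alpha_j,\beta_j)$ to $(\alpha_j,\alpha_j-\beta_j)$; likewise $h$ and $h^{-1}$ become conjugate in the presence of a fibre-reversing loop, which lets $b$ be reduced. Carrying this out gives $0<\beta_j<\alpha_j$ in the purely orientation-preserving case (type o1) and $0<\beta_j\le \alpha_j/2$ once reversals are available (types o2, n1, n3, n4), together with the reduction of $b$ to $\{0,1\}$ and its collapse to $0$ forced by an $\alpha_j=2$ relation. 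I expect the hard part to be exactly this bookkeeping: showing that the listed moves suffice to reach a canonical form, that the resulting constraints are mutually exclusive and exhaustive, and that the case split o1/o2 versus n1--n4 matches all admissible patterns of fibre-orientation behaviour over orientable and non-orientable bases. The cleanest organisation is to invoke Seifert's classification of fibred spaces as presented in the cited notes, rather than to re-derive the full homeomorphism classification from scratch.
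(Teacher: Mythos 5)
The paper does not prove this statement at all --- it is quoted from the literature (the theorem header reads ``see, for example \cite{OrlikNotes}*{Section 5.2}''), so there is no in-paper argument to compare yours against. Your outline --- van Kampen on the circle bundle over the base with the exceptional-fibre neighbourhoods removed, re-gluing the fibred solid tori to obtain the relations $q_j^{\alpha_j}h^{\beta_j}=1$, reading the conjugation relations off the fibre-orientation monodromy, and then normalising the Seifert invariants into the types o1, o2, n1--n4 --- is exactly the standard derivation carried out in the cited source and is the correct approach; the only caveat is that the normalisation and exhaustiveness bookkeeping you explicitly defer is where essentially all of the work lies, and in this paper that work is simply outsourced to the citation rather than reproduced.
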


\begin{ntn}
	Let \(M\) be a Seifert fibered manifold with base orbifold
	\(B\). Using the notation from \cref{Seifert-pres}, define
	\(\chi(B)\) by
	\[
		\chi(B) =
		\begin{cases}
				2 - (2g + \sum_{i = 1}^r (1 - \frac{1}{\alpha_i}))
			& \text{if } B \text{ is orientable} \\
			2 - (g + \sum_{i = 1}^r (1 - \frac{1}{\alpha_i}))
			& \text{if } B \text{ is non-orientable}.
		\end{cases}
	\]
	Furthermore, we say \(M\) (or \(\pi_1(M)\)) is \((g; \alpha_1, \ldots, \alpha_r)\).
\end{ntn}

We use $\pi_1^{orb}(B)$ to denote the \emph{orbifold fundamental group of $B$}. It is important to note 
this is not the same as 
the fundamental group of the underlying topological manifold, that is, it is not the same as the fundamental group of $B$. The orbifold fundamental group 
$\pi_1^{orb}(B)$ 
is related to the fundamental group of $M$ via e.g. \cite[Lemma~3.1]{Joecken2021} which in the notation of the presentations given in the statement of Theorem~\ref{Seifert-pres} says that
$\pi_1^{orb}(B) \cong \pi_1(M) / \langle h \rangle$.     

Now following \cite[Subsection~5.1]{Joecken2021} let $M$ be a 
Seifert fibered manifold 
with base orbifold $B$. Then one of the following holds: 
\begin{itemize} 
\item $B$ is a bad orbifold; or 
\item $B$ is a good orbifold, modelled on either $S^2$, $\mathbb{H}^2$ or $\mathbb{E}^2$. 
  \end{itemize}
Here by definition an orbifold is called \emph{good} if it is the quotient of a manifold by an action of a discrete group of isometries, and an orbifold that is not good is called \emph{bad}. 
Then we have the following:  
\begin{itemize} 
\item If $B$ is either a bad orbifold or a good orbifold modelled on $S^2$ then $\pi_1(M)$ is virtually cyclic; see \cite[Proposition 5.1]{Joecken2021}.
If $\pi_1(M)$ is infinite virtually cyclic then $\langle h \rangle$ is infinite cyclic by \cite{BoyerRolfsenWiest}*{Proposition 4.1(1)} and so $\pi_1(M) / \langle h \rangle$ is finite. Otherwise, $\pi_1(M)$ is finite and so $\pi_1(M) / \langle h \rangle$ is finite.        
\item If $B$ is a good orbifold modelled on $\mathbb{H}^2$ then 
the orbifold fundamental group 
$\pi_1^{orb}(B)$  
is hyperbolic; see \cite[Proof of Proposition 5.2]{Joecken2021}. 
\item If $B$ is a good orbifold modelled on $\mathbb{E}^2$ then the 
orbifold fundamental group 
$\pi_1^{orb}(B)$ 
is virtually $\mathbb{Z} \times \mathbb{Z}$; see \cite[Section~4]{BoyerRolfsenWiest}.  
  \end{itemize}
Combining these observations together with \cite[Theorem~5.3(2)]{Scott1983} gives the following well-known result. 

\begin{theorem}
		\label{Seifert-Euler-qts}
	Let \(M\) be a Seifert fibered manifold with base
	orbifold \(B\). Then 
	\begin{enumerate}
			\item If \(\chi(B) < 0\), then \(\pi_1(M) / \langle h
					\rangle\) is hyperbolic;
			\item If \(\chi(B) = 0\), then \(\pi_1(M) / \langle h
					\rangle\) is virtually \(\Z^2\);
			\item If \(\chi(B) > 0\), then \(\pi_1(M) / \langle h
					\rangle\) is finite.
	\end{enumerate}
	Moreover, \(\chi(B) = 0\) in precisely the following cases:
\begin{multicols}{2}
	\begin{itemize}
		\item Type (o1), (0; 2, 2, 2, 2);
		\item Type (o1), (0; 3, 3, 3);
		\item Type (o1), (0; 2, 4, 4);
		\item Type (o1), (0; 2, 3, 6); 
		\item Type (o1-2), (1;);
		\item Type (n1-2), (1; 2, 2);
		\item Type (n1-3), (2;).
	\end{itemize}
  \end{multicols}
\end{theorem}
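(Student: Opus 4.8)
The plan is to treat the trichotomy (1)--(3) and the explicit list in the ``Moreover'' clause separately. For the trichotomy I would first invoke the isomorphism $\pi_1^{orb}(B) \cong \pi_1(M)/\langle h \rangle$ recorded above, so that in every case the group in question is the orbifold fundamental group of $B$. The three bullet-pointed observations preceding the statement already pin down $\pi_1^{orb}(B)$ up to the relevant coarse classification once the model geometry of $B$ is known: if $B$ is bad or modelled on $S^2$ the quotient is finite, if $B$ is modelled on $\mathbb{H}^2$ it is hyperbolic, and if $B$ is modelled on $\mathbb{E}^2$ it is virtually $\Z^2$. The only remaining ingredient is the dictionary between the sign of the orbifold Euler characteristic and the model geometry, namely \cite[Theorem~5.3(2)]{Scott1983}: a good orbifold admits an $S^2$, $\mathbb{E}^2$ or $\mathbb{H}^2$ structure according as $\chi(B)$ is positive, zero or negative, and a bad orbifold has $\chi(B) > 0$. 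Combining these two facts yields (1)--(3) immediately: $\chi(B) < 0$ forces $\mathbb{H}^2$ and hence hyperbolicity; $\chi(B) = 0$ forces $\mathbb{E}^2$ and hence virtually $\Z^2$; and $\chi(B) > 0$ forces $S^2$ or bad, and hence finiteness.

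For the ``Moreover'' clause I would solve $\chi(B) = 0$ directly from the defining formula, treating the orientable and non-orientable cases in turn and assuming each listed $\alpha_i \geq 2$ (a value $\alpha_i = 1$ contributes nothing and may be discarded). In the orientable case the equation becomes $2g + \sum_{i=1}^{r}(1 - \tfrac{1}{\alpha_i}) = 2$. Since each summand lies in $[\tfrac12, 1)$, a short case analysis on $g$ is forced: $g \geq 2$ is impossible; $g = 1$ forces $r = 0$, giving $(1;)$; and $g = 0$ reduces to $\sum_{i=1}^{r}(1 - \tfrac{1}{\alpha_i}) = 2$, which admits only $r = 4$ with all $\alpha_i = 2$, or $r = 3$ with $\tfrac1{\alpha_1}+\tfrac1{\alpha_2}+\tfrac1{\alpha_3} = 1$. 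The latter is the classical Euclidean-triangle equation whose only solutions with $\alpha_i \geq 2$ are $(3,3,3)$, $(2,4,4)$ and $(2,3,6)$. This produces exactly the five orientable entries $(0;2,2,2,2)$, $(0;3,3,3)$, $(0;2,4,4)$, $(0;2,3,6)$ and $(1;)$.

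In the non-orientable case (where $g \geq 1$) the equation is $g + \sum_{i=1}^{r}(1 - \tfrac{1}{\alpha_i}) = 2$. Bounding each summand below by $\tfrac12$ again forces $g \leq 2$: for $g = 2$ we get $r = 0$ and the entry $(2;)$, while for $g = 1$ we need $\sum_{i=1}^{r}(1 - \tfrac{1}{\alpha_i}) = 1$, whose only solution with $\alpha_i \geq 2$ is $r = 2$ with $\alpha_1 = \alpha_2 = 2$, giving $(1;2,2)$. Finally I would read off the admissible types by checking the genus constraints in \cref{Seifert-pres}: for the $g = 0$ orientable vectors only type o1 is available (type o2 requires $g > 0$), for $(1;)$ both o1 and o2 occur, and on the non-orientable side the hypotheses $g \geq 2$ for n3 and $g \geq 3$ for n4 leave n1--n2 for $(1;2,2)$ and n1--n3 for $(2;)$.

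I do not anticipate any serious obstacle: the trichotomy is an immediate assembly of the cited facts, and the enumeration is elementary Diophantine bookkeeping. The only points demanding a little care are the solution of $\tfrac1{\alpha_1}+\tfrac1{\alpha_2}+\tfrac1{\alpha_3}=1$ and the correct matching of each parameter vector to the permitted types o1--o2 and n1--n4 via their genus hypotheses.
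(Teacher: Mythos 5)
Your proposal is correct and follows essentially the same route as the paper: the trichotomy is assembled from the identification $\pi_1^{orb}(B) \cong \pi_1(M)/\langle h\rangle$, the three geometric observations preceding the statement, and the dictionary between $\operatorname{sgn}\chi(B)$ and the model geometry from \cite[Theorem~5.3(2)]{Scott1983}, exactly as the paper does before declaring the result well known. Your explicit Diophantine enumeration of the $\chi(B)=0$ cases (including the type restrictions via the genus hypotheses of \cref{Seifert-pres}) is correct and merely fills in bookkeeping the paper leaves implicit.
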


\begin{lem}
		\label{h-inf-order}
	Let \(M\) be a Seifert fibered manifold with base orbifold
	\(B\) such that \(\chi(B) = 0\). Then, using the notation
	of \cref{Seifert-pres}, the element \(h\) has infinite order in
	\(\pi_1(M)\).
\end{lem}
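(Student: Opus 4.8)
The plan is to exploit the normal subgroup \(\langle h\rangle\) and the quotient computed in \cref{Seifert-Euler-qts}. From the relations in \cref{Seifert-pres} every generator conjugates \(h\) to \(h^{\pm 1}\), so \(\langle h\rangle\trianglelefteq G\) where \(G=\pi_1(M)\), and \(G/\langle h\rangle\cong\pi_1^{orb}(B)\). Since \(\chi(B)=0\), \cref{Seifert-Euler-qts} gives that \(\pi_1^{orb}(B)\) is virtually \(\Z^2\), hence infinite. It therefore suffices to rule out \(h\) having finite order. I would argue by contradiction: if \(h\) had finite order then \(\langle h\rangle\) would be a finite normal subgroup, so \(G\) would be a (finite)-by-(virtually \(\Z^2\)) group and hence itself virtually \(\Z^2\); the task is then to contradict this, i.e. to exhibit a genuine third dimension in \(G\).

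The substance of the proof is producing that third dimension. Working inside the crystallographic group \(Q=\pi_1^{orb}(B)\) (one of the finitely many groups listed in \cref{Seifert-Euler-qts}), I would fix two elements \(t_1,t_2\) generating a finite-index translation subgroup \(\cong\Z^2\) and choose lifts \(\tilde t_1,\tilde t_2\in G\). Because \(t_1,t_2\) commute in \(Q\), the commutator \([\tilde t_1,\tilde t_2]\) lies in \(\langle h\rangle\), and a direct computation from the long relation \(q_1\cdots q_r[u_1,v_1]\cdots[u_g,v_g]=h^b\) together with the relations \(q_j^{\alpha_j}h^{\beta_j}=1\) expresses it as \(h^{c}\), where \(c\) is controlled by the Seifert Euler number \(e=-\bigl(b+\sum_j \tfrac{\beta_j}{\alpha_j}\bigr)\). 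If \(e\neq 0\) (the Nil case) then \(c\neq 0\), and the relation \([\tilde t_1,\tilde t_2]=h^{c}\) with \(\tilde t_1,\tilde t_2\) of infinite order forces \(\langle h\rangle\) to be infinite, contradicting finiteness. If \(e=0\) (the Euclidean case) the lifts can be taken to commute, so \(\langle \tilde t_1,\tilde t_2,h\rangle\) is abelian of rank three, again forcing \(h\) to have infinite order. A cleaner but less elementary alternative is to note that, as \(\chi(B)=0\), \(M\) is modelled on \(\mathbb{E}^3\) or Nil geometry (see \cite{Scott1983}), so \(G\) is a torsion-free cocompact lattice of Hirsch length \(3\); then finite order of \(h\) would force \(h=1\) and hence \(G\cong\pi_1^{orb}(B)\) of Hirsch length \(2\), a contradiction.

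The main obstacle is exactly this non-degeneracy step: proving that the extension \(1\to\langle h\rangle\to G\to Q\to 1\) does not collapse \(h\) to torsion. Making the commutator computation \([\tilde t_1,\tilde t_2]=h^{c}\) precise and uniform is the delicate part, because it must be carried out across both orientation types and all seven cases of \cref{Seifert-Euler-qts}, and because in the type o2 and non-orientable cases \(\langle h\rangle\) is normal but not central (some generators invert \(h\)), so one works with \(h^2\) or maps onto a target such as the infinite dihedral group in which a reflection inverts an infinite-order translation. The Euclidean case \(e=0\) also needs separate treatment, since there the commutator mechanism degenerates and the infinite order of \(h\) comes instead from a genuine \(\Z^3\) subgroup.
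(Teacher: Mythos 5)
Your reduction to ruling out finite order for \(h\) is fine, but the central step --- ``producing the third dimension'' --- is where the argument breaks down. In the case \(e\neq 0\) you claim that the relation \([\tilde t_1,\tilde t_2]=h^{c}\) with \(c\neq 0\) and \(\tilde t_1,\tilde t_2\) of infinite order forces \(\langle h\rangle\) to be infinite. It does not: in the quotient of the integral Heisenberg group by the subgroup generated by the \(n\)-th power of the central generator \(z\), the images of the standard generators still have infinite order and satisfy \([x,y]=z\neq 1\), yet \(z\) has order \(n\) and the group is virtually \(\Z^2\). So no contradiction with ``\(G\) is virtually \(\Z^2\)'' can be extracted from the commutator identity alone; one would have to exhibit, say, a homomorphism from the presented group onto a group in which the image of \(h\) visibly has infinite order (onto the Heisenberg group itself, or onto \(\Z\) in the Euclidean case), and that is precisely the content you have not supplied. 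The \(e=0\) case has the same defect: asserting that \(\langle\tilde t_1,\tilde t_2,h\rangle\) is ``abelian of rank three'' presupposes that \(h\) has infinite order, which is the statement being proved.

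The one sound route in your write-up is the geometric aside: if one accepts that a closed Seifert fibered manifold with \(\chi(B)=0\) is modelled on \(\mathbb{E}^3\) or Nil, then \(\pi_1(M)\) is a torsion-free cocompact lattice of Hirsch length \(3\) and the conclusion follows. That is legitimate but much heavier than the paper's argument, which simply cites \cite{BoyerRolfsenWiest}*{Proposition 4.1(1)} --- \(h\) has finite order if and only if \(\pi_1(M)\) is finite --- and then notes that \(\pi_1(M)\) is infinite because, by \cref{Seifert-Euler-qts}, it surjects onto a virtually \(\Z^2\) group. If you want a self-contained elementary proof you should either import that equivalence or construct the explicit homomorphisms mentioned above; the commutator bookkeeping you outline, however carefully carried out across the seven cases, will not by itself certify that \(h\) is not torsion.
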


\begin{proof}
		By \cite{BoyerRolfsenWiest}*{Proposition 4.1(1)}, \(h\)
		has finite order if and only if \(\pi_1(M)\) is finite.
		But \cref{Seifert-Euler-qts} tells us that \(\pi_1(M)\)
		projects onto a virtually \(\Z^2\) group, and thus
		is infinite.
\end{proof}

\section{Seifert 3-manifold groups with zero orbifold Euler characteristic}
In this section we consider the Seifert 3-manifold groups $\pi_1(M)$ where $M$ be a Seifert fibered manifold whose base orbifold $B$ satisfies $\chi(B) = 0$. 
The results in this section give a complete classification of which of these groups have decidable Diophantine problem. It will turn out for this class that in all cases that the Diophantine problem is decidable, the first-order theory will also be decidable. 
The results of this section will then be applied in the next section where we prove the first main result of the article Theorem~\ref{Seifert-DP}.  
Throughout this section in all cases where we show the Diophantine problem is undecidable we do this by giving an encoding Hilbert's tenth problem over the integers.

The next two results provide a framework for reducing Hilbert's tenth problem
to the Diophantine problem in various
examples on Seifert 3-manifold groups with whose base orbifold has zero Euler
charactersitic.
\begin{proposition}
				\label{comm-undecidable}
	Let \(G\) be a group admitting a normal infinite cyclic subgroup
	\(\langle h \rangle\). Further, suppose that there exist \(x, y \in G\)
	satisfying all of the following:
	\begin{enumerate}
		\item There exists \(c \in \Z \setminus \{0\}\) such that for all \(i, j \in \Z\),
						\([x^i, y^j] = h^{cij}\);
		\item There exist equationally definable sets \(S_x\) and \(S_y\), such that
						\(S_x / \langle h \rangle = \langle x \rangle / \langle h \rangle\)
						and \(S_y / \langle h \rangle = \langle y \rangle / \langle h \rangle\);
		\item \(\langle h \rangle\) is equationally definable.
	\end{enumerate}
Then the ring of integers $\Z$ is interpretable in $G$ by finite systems of equations, and thus Hilbert's tenth problem over $\Z$  is reducible to the Diophantine problem in $G$. 
Hence \(G\) has an undecidable Diophantine problem.
\end{proposition}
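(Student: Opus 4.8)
The plan is to exhibit an interpretation of the ring $(\Z,+,\times)$ in $G$ by finite systems of equations, taking as domain the subgroup $\langle h\rangle$ under the bijection $n\mapsto h^n$ (well defined since $\langle h\rangle$ is infinite cyclic), and then to invoke the standard fact that such an interpretation reduces Hilbert's tenth problem over $\Z$ to the Diophantine problem in $G$. The domain $\langle h\rangle$ is equationally definable by hypothesis~(3). Addition is immediate: since $h^mh^n=h^{m+n}$, the set $\{(u,v,w)\in\langle h\rangle^3 : w=uv\}$ is equationally definable (three copies of the set from~(3) conjoined with the single equation $uv=w$), and under $n\mapsto h^n$ it is exactly the graph $\{(m,n,m+n)\}$ of addition. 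The real work is to define multiplication.

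The key technical preliminary I would establish first is that hypothesis~(1) forces $h$ to commute with both $x$ and $y$. Expanding $[x^2,y]$ via $[ab,c]=a[b,c]a^{-1}[a,c]$ gives $xh^cx^{-1}h^c$, which by~(1) equals $h^{2c}$, so $xh^cx^{-1}=h^c$; since conjugation by $x$ is an automorphism of $\langle h\rangle\cong\Z$ (as $\langle h\rangle$ is normal) and hence is $\pm 1$, the alternative $xhx^{-1}=h^{-1}$ would give $h^{2c}=1$, contradicting $c\neq 0$ and $h$ having infinite order; thus $[x,h]=1$, and symmetrically $[y,h]=1$. Consequently every power of $h$ is central in $\langle x,y,h\rangle$ and therefore drops out of any commutator. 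Since $S_x/\langle h\rangle=\langle x\rangle/\langle h\rangle$, every element of $S_x$ has the form $X=h^{k_1}x^i$, and likewise every $Y\in S_y$ equals $h^{k_2}y^j$; hence $[X,y]=[x^i,y]=h^{ci}$, $[x,Y]=[x,y^j]=h^{cj}$, and $[X,Y]=[x^i,y^j]=h^{cij}$, all independent of the chosen representatives.

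I would then define the multiplication gadget as the ternary relation $M=\{(u,v,w)\in\langle h\rangle^3 : \exists\,X\in S_x,\ \exists\,Y\in S_y \text{ with } [X,y]=u^c,\ [x,Y]=v^c,\ [X,Y]=w^c\}$. This is equationally definable: it is the conjunction of the defining systems for $\langle h\rangle$ (hypothesis~(3), three copies), for $S_x$ and $S_y$ (hypothesis~(2)), and of the three group equations $XyX^{-1}y^{-1}=u^c$, $xYx^{-1}Y^{-1}=v^c$, $XYX^{-1}Y^{-1}=w^c$ (in which $x,y$ are constants and $u^c,v^c,w^c$ are literal words), after which $X,Y$ are existentially projected away, using that equationally definable sets are closed under conjunction and projection. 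To check correctness, write $u=h^a$, $v=h^b$, $w=h^d$; by the previous paragraph the constraint $[X,y]=u^c$ reads $h^{ci}=h^{ca}$, forcing $i=a$ (as $c\neq 0$), and likewise $j=b$, after which $[X,Y]=h^{cab}=w^c=h^{cd}$ gives $d=ab$. Conversely any $(h^a,h^b,h^{ab})$ is attained by choosing representatives $X\in S_x\cap\langle h\rangle x^a$ and $Y\in S_y\cap\langle h\rangle y^b$, which exist by~(2). Thus $M=\{(h^i,h^j,h^{ij}):i,j\in\Z\}$, the graph of multiplication under $n\mapsto h^n$.

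With an equationally definable domain $\langle h\rangle\cong\Z$ together with equationally definable graphs of $+$ and $\times$, I would finish by translating any finite system of integer polynomial equations into a finite system over $G$ in the routine way: one $\langle h\rangle$-variable per integer unknown, auxiliary $\langle h\rangle$-variables for each intermediate sum and product enforced through the addition and multiplication graphs, and each integer constant $n$ encoded as $h^n$ (with $0$ as the identity), so that the group system is satisfiable if and only if the polynomial system has an integer solution. This is precisely an interpretation of $\Z$ by finite systems of equations, reducing Hilbert's tenth problem over $\Z$ to the Diophantine problem in $G$; undecidability of the former (Matijasevi\v{c}) then yields undecidability of the latter. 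I expect the main obstacle to be the representative-independence underlying the gadget, i.e.\ the observation that hypothesis~(1) secretly forces $h$ to centralise $x$ and $y$: this is what makes arbitrary elements of $S_x$ and $S_y$ interchangeable and lets the index-reading equations $[X,y]=u^c$ and $[x,Y]=v^c$ pin down the exponents faithfully.
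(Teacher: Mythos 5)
Your proof is correct and follows essentially the same route as the paper's: the domain is $\langle h\rangle$, addition is just the group operation restricted to $\langle h\rangle$, and multiplication is encoded by the commutator gadget $[X,y]=u^c$, $[x,Y]=v^c$, $[X,Y]=w^c$ with $X\in S_x$, $Y\in S_y$, which is precisely the paper's system assembled from the relations $R_x$, $R_y$ and the equation $[X',Y']=Z^c$. The one point of divergence is how the action of $x$ and $y$ on $\langle h\rangle$ is handled: the paper replaces $x,y$ by $x^2,y^2$ (and $S_x,S_y$ by their sets of squares) to force centralisation of $h$, whereas you observe that hypothesis (1) already forces $[x,h]=[y,h]=1$ via the identity $[x^2,y]=x[x,y]x^{-1}[x,y]$ together with $c\neq 0$ and the infinite order of $h$; this observation is correct and shows the paper's squaring step is in fact unnecessary.
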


\begin{proof}
We accomplish this by embedding the integers into
	\(G\) as \(\langle h \rangle\). Since \(\langle h \rangle\) is equationally 
	definable, we can use the Diophantine problem to add constraints of the
	form \(X \in \langle h \rangle\). To recover the Diophantine problem in the
	ring of integers, we must encode two types of equation: \(U + V = W\) and
	\(UV = W\), where \(U\), \(V\) and \(W\) can denote constants or variables.
	The additive case is straighforward; we simply encode it as
	\(XY = Z\), with the constraints that \(X, Y, Z \in \langle h \rangle\),
	where \(X\) is a constant if and only if \(U\) is, in which case
	\(X = h^U\), and the analogous statements hold for \(Y\) and \(V\), and
	\(Z\) and \(W\).

	The equation \(UV = W\) requires a little extra work. Since \(\langle h
	\rangle\) is normal, and infinite cyclic, elements of \(G\) act by conjugation on \(\langle
	h \rangle\) by either fixing \(h\), or mapping \(h\) to \(h^{-1}\). Thus any
	element of \(G\) that is a square will centralise \(h\). In particular, by
	replacing \(x\) with \(x^2\) and \(y\) with \(y^2\), we can assume without
	loss of generality that \(xhx^{-1} = h\) and \(yhy^{-1} = h\). Note that this
	does not affect the assumption that \([x^i, y^j] = h^{cij}\), modulo
	a change of the constant \(c\).  For the second
	assumption, this is also okay, since the set of squares of elements of an
	equationally definable set is also equationally definable.
Furthermore, if $\overline{S_x}$ is the set of squares of elements of $S_x$ then 
since $S_x / \langle h \rangle = \langle x \rangle / \langle h \rangle$ 
we have 
\[
\overline{S_x} / \langle h \rangle =
\{ g^2 \langle h \rangle : g \in S_x \}   
= \{ k^2 \langle h \rangle : k \in \langle x \rangle  \}   
= \langle x^2 \rangle / \langle h \rangle,   
\]   
and similarly for the corresponding set $\overline{S_y}$ of squares of elements of $S_y$.  

	Next note that since \(x\) and \(y\) both commute with \(h\), we have \([x^i
	h^{k_1}, y^j h^{k_2}] = h^{cij}\). We now show that the set \(R_x  = 
	\{(x^i h^k, h^i) : x^i h^k \in S_x\}\) is equationally definable.  We show
	that it is the set of solutions to the variables \((X, Z)\) within the equation \([X,
	y] = Z^c\), with the constraints \(X \in S_x\) and \(Z \in \langle h
	\rangle\). This follows since, by the above \([x^i h^k, y] = h^{ci}\), and so
	\((x^i h^k, h^l)\) is a solution if and only if 
$[x^i h^k, y] = h^{lc}$ which holds if and only if $h^{ci} = h^{lc}$ if and only if  
\(i = l\), since $c \neq 0$. 
Hence \(R_x\) is
	equationally definable. By symmetry \(R_y  = \{(y^j h^k, h^j) :  
	y^j h^k \in S_y\}\) is also equationally definable.

	We can now return to the equation in the ring of integers \(UV = W\).
	We encode this as the following system of equations with constraints:
\[ 
([X', Y'] = Z^c) \wedge 
(X, Y, Z \in \langle h \rangle) \wedge 
(X' \in S_x) \wedge
(Y' \in S_y) \wedge
((X', X)) \in R_x \wedge 
((Y', Y)) \in R_y.  
\]
Then 
$(h^i, h^j, h^k, x^ph^q, y^rh^s)$ 
is a solution to $(X,Y,Z,X',Y')$ in the above system of equations if and only if 
\[
[x^p h^q,y^rh^s] = h^{ck} \wedge 
(x^p h^q,h^i) \in R_x \wedge
(y^r h^s,h^j) \in R_y 
\] 
which holds if and only if $ h^{cpr} = h^{ck} $ and $ p=i $ and $r=j$. Since $c \neq 0$ this holds if and only if $ij=k$, $p=i$ and $r=j$.     
It follows that the set of solutions to \((X, Y, Z)\) 
in the above system of equations is 
\(\{(h^i, h^j, h^{ij}) :  
	i, j \in \Z\}\). 

We have proved that  ring of integers $\Z$ is interpretable in $G$ by finite systems of equations, and thus Hilbert's tenth problem over $\Z$  is reducible to the Diophantine problem in $G$. In particular, \(G\) has an undecidable Diophantine problem.
\end{proof}

\begin{lem}\label{lem:useful}
Let $G$ be a group and let $x,y,h \in G$ satisfy
\[
xh=hx, \quad yh=hy, \quad \mbox{and} \quad [x,y] = xyx^{-1}y^{-1} = h^d 
\]  
for some fixed integer $d \in \Z$. Then for all $i,j \in \Z$ we have 
\[
[x^i,y^j] = h^{dij}.
\]  
  \end{lem}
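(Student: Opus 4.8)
The plan is to exploit the fact that, although $x$ and $y$ need not commute, their commutator $[x,y] = h^d$ is a power of $h$ and is therefore \emph{central} in the subgroup $H = \langle x, y, h\rangle$: indeed $h$ commutes with $x$ and with $y$ by hypothesis, so $h^d$ commutes with $x$, with $y$, and (trivially) with $h$. Once we know that the commutator in question is central, the general nonlinear commutator expansions collapse to a bilinear form, and the claimed formula $[x^i, y^j] = h^{dij}$ is precisely the statement of that bilinearity.

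Concretely, I would carry out the argument in two stages. First I would fix $y$ and show by induction on $i \geq 0$ that $[x^i, y] = h^{di}$, using the expansion $[ab, c] = a[b,c]a^{-1}\cdot[a,c]$ with $a = x^{i-1}$, $b = x$ and $c = y$. Since $x$, and hence $x^{i-1}$, commutes with $h$, the conjugate $x^{i-1}[x,y]x^{-(i-1)}$ equals $[x,y] = h^d$, so the recursion reads $[x^i,y] = h^d \cdot [x^{i-1}, y]$, yielding $[x^i,y] = h^{di}$ with base case $[x^0, y] = 1 = h^0$. To cover negative $i$ I would use the identity $[x^{-1}, y] = x^{-1}[x,y]^{-1}x = h^{-d}$ (again because $x$ commutes with $h$) and run the same induction downward, so that $[x^i, y] = h^{di}$ holds for all $i \in \Z$.

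In the second stage I would fix $x^i$ and vary the second entry, proving $[x^i, y^j] = \bigl([x^i, y]\bigr)^j = h^{dij}$ by an identical induction, this time based on the identity $[a, bc] = [a,b]\cdot b[a,c]b^{-1}$ with $a = x^i$, $b = y$ and $c = y^{j-1}$. The element $[x^i, y] = h^{di}$ is a power of $h$ and hence central, so the conjugation by $y$ appearing in the expansion is trivial and the recursion collapses to $[x^i, y^j] = h^{di}\cdot [x^i, y^{j-1}]$; negative $j$ is handled exactly as before. Combining the two stages gives $[x^i, y^j] = h^{dij}$ for all $i, j \in \Z$.

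The computation is entirely routine; the only points requiring care are the bookkeeping for the negative exponents and being explicit that every conjugation appearing in the commutator expansions is trivial, because in each case the conjugated element is a power of $h$. The single genuine idea — that centrality of $[x,y]$ forces the commutator map to be bilinear in the exponents — is built in from the outset, so I do not anticipate any real obstacle.
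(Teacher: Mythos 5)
Your proof is correct and rests on the same idea as the paper's: the centrality of $h$ (hence of $[x,y]=h^d$) collapses the standard commutator expansions into a bilinear recursion, which is then closed by induction with a separate downward step for negative exponents. The only difference is organizational — the paper first computes $xy^jx^{-1}=(h^dy)^j=h^{dj}y^j$ directly and then runs a single induction on $|i|$, whereas you run two inductions (one in each exponent) — but this is cosmetic and both arguments are complete.
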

\begin{proof} 
From $xyx^{-1} = h^d y$ if follows that for all $j \in \Z$ we have 
\[
xy^j x^{-1} = (xy x^{-1})^j = (h^d y)^j = h^{dj} y^j 
\]   
with the last equality holding since $h$ commutes with $y$ by assumption. We shall now proceed by induction on \(|i|\) to show that
\begin{align}\label{NewOne} 
[x^i, y^j] & = h^{dij}.  
\end{align} 
If \(|i| = 0\), then both sides equal \(1\). Assume \(|i| > 0\), and inductively suppose the above statement holds, for all \(i' \in \Z\), with \(|i'| < |i|\). We begin with the case \(i > 0\). 
We have by induction  
\begin{align*} 
[x^i, y^j] 
& = x^{i - 1} (x y^j x^{-1}) x^{-(i-1)} y^{-j}  
 = x^{i - 1} h^{dj} y^j x^{-(i-1)} y^{-j} \\ 
& = x^{i - 1} y^j x^{-(i-1)} y^{-j} h^{dj}  
 = [x^{i-1}, y^j] h^{dj} = h^{(i-1)dj} h^{dj} = h^{ijd}. 
\end{align*} 
For \(i < 0\), note the in the above equalities, we only used the assumption \(i > 0\) to apply the inductive step. 
Thus the sequence of equalities above shows that for any integers $i$ and $j$ we have   
$[x^i, y^j] = [x^{i-1}, y^j] h^{dj}$. 
Replacing $i$ by $i+1$ in this equation and rearranging by moving  
$h^{dj}$ to the other side gives 
$[x^{i}, y^j] =[x^{i+1}, y^j] h^{-dj}  $ 
for all integers $i$ and $j$. Using induction, we thus obtain 
$[x^i, x^j] = h^{(i+1)jd} h^{-dj} = h^{ijd}$. 
This completes the proof that \eqref{NewOne} holds and thus completes the proof of the lemma. 
  \end{proof}

We now begin our analysis of the types of Seifert 3-manifold groups whose
base orbifold has 
zero Euler characteristic, as detailed in \cref{Seifert-Euler-qts}.

\begin{lem}[o1 - (1;)]\label{thm:3ManifoldGroups}
				\label{o1-1}
	  Let $b \in \mathbb{Z}$ and let 
	  \[
		  G = 
		  \langle 
		  u_1, v_1, h \mid 
		  [u_1,h]=1, \; [v_1,h]=1, \; [u_1,v_1]=h^b
		  \rangle. 
	  \]
	  Then the following are equivalent:
	  \begin{enumerate}
	  	\item \(G\) has a decidable Diophantine problem;
	  	\item \(b = 0\);
	  	\item \(G \cong \Z^3\).
	  \end{enumerate}
Furthermore, in the case that the above conditions are not satisfied then Hilbert's tenth problem over $\Z$  is reducible to the Diophantine problem in $G$.   
\end{lem}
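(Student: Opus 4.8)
The plan is to prove the three equivalences by establishing $(2)\Leftrightarrow(3)$, $(2)\Rightarrow(1)$, and the contrapositive $\neg(2)\Rightarrow\neg(1)$, the last of which will simultaneously yield the concluding ``furthermore'' claim. The implication $(2)\Rightarrow(3)$ is immediate: when $b=0$ the relator $[u_1,v_1]=h^b$ reads $[u_1,v_1]=1$, so $G$ is the free abelian group on $\{u_1,v_1,h\}$, i.e. $G\cong\Z^3$. For $(3)\Rightarrow(2)$, if $G\cong\Z^3$ then $G$ is abelian, so $[u_1,v_1]=1$ and hence $h^b=1$; since $h$ has infinite order this forces $b=0$. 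Here the infinite order of $h$ follows from \cref{h-inf-order} (this being the type o1, $(1;)$ case, for which $\chi(B)=0$), or directly from the representation $u_1\mapsto I+bE_{12}$, $v_1\mapsto I+E_{23}$, $h\mapsto I+E_{13}$ into $\mathrm{GL}_3(\Z)$, under which $h^n\mapsto I+nE_{13}$. Finally $(2)\Rightarrow(1)$ is immediate from \cref{rmk:virtuallyAbelianEDT0L}, since $\Z^3$ is (virtually) abelian and therefore has decidable, indeed EDT0L, Diophantine problem.

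The substance of the lemma, and the source of the furthermore statement, is the contrapositive $b\neq0\Rightarrow\neg(1)$, which I would obtain by verifying the three hypotheses of \cref{comm-undecidable} with $x=u_1$, $y=v_1$ and $c=b$. Because $h$ commutes with $u_1$, with $v_1$, and with itself, $\langle h\rangle$ is central, hence normal, and it is infinite cyclic as $h$ has infinite order. Hypothesis (1) follows directly from \cref{lem:useful} applied with $d=b$, which gives $[u_1^i,v_1^j]=h^{bij}$ for all $i,j\in\Z$, with $c=b\neq0$ as required.

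The main work lies in hypotheses (2) and (3), and the careful point is to compute the relevant centralisers and the centre of $G$ exactly. Using the normal form $u_1^iv_1^jh^k$ for elements of $G$ (valid since $G$ is a central extension of $G/\langle h\rangle\cong\Z^2$ by the infinite cyclic $\langle h\rangle$, with uniqueness of the exponents guaranteed by the faithfulness of the matrix representation above when $b\neq0$), together with $[u_1^p,v_1]=h^{bp}$, $[u_1,v_1^q]=h^{bq}$ and $b\neq0$, I would show $C_G(u_1)=\langle u_1\rangle\langle h\rangle$, $C_G(v_1)=\langle v_1\rangle\langle h\rangle$, and $Z(G)=\langle h\rangle$. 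This yields hypothesis (2) by taking $S_x=C_G(u_1)$ and $S_y=C_G(v_1)$, which are equationally definable as the solution sets of $[X,u_1]=1$ and $[X,v_1]=1$ respectively, since then $S_x/\langle h\rangle=\langle u_1\rangle/\langle h\rangle$ and $S_y/\langle h\rangle=\langle v_1\rangle/\langle h\rangle$. It also yields hypothesis (3), because $\langle h\rangle=Z(G)$ is equationally definable as the solution set of $[X,u_1]=1\wedge[X,v_1]=1\wedge[X,h]=1$.

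I expect the only genuine obstacle to be justifying these three structural identities, which rests entirely on the explicit normal form and on $h$ having infinite order; the commutator computations themselves are routine once the normal form is in hand. With the hypotheses verified, \cref{comm-undecidable} delivers both the reduction of Hilbert's tenth problem over $\Z$ to the Diophantine problem in $G$ and the undecidability of the latter, which completes $\neg(2)\Rightarrow\neg(1)$ and establishes the furthermore clause, thereby closing the cycle of equivalences.
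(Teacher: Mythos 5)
Your proposal is correct and follows essentially the same route as the paper's proof: split on $b=0$ versus $b\neq 0$, establish the normal form $u_1^iv_1^jh^k$, identify $C_G(u_1)=\langle u_1,h\rangle$ and $C_G(v_1)=\langle v_1,h\rangle$ as the equationally definable sets, obtain $[u_1^i,v_1^j]=h^{bij}$ from \cref{lem:useful}, and invoke \cref{comm-undecidable}. The only cosmetic differences are your explicit unitriangular matrix representation (the paper instead cites \cref{h-inf-order} and the quotient $G/\langle h\rangle\cong\Z^2$ for infinite order of $h$ and uniqueness of normal forms) and your spelling out of the $(3)\Rightarrow(2)$ direction, which the paper leaves implicit.
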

\begin{proof}
We begin with the case that \(b = 0\). Then the presentation for \(G\) is clearly that for \(\Z^3\). Thus \(G\) has a decidable Diophantine problem. Therefore it remains to deal with the case that \(b \neq 0\).

First note that since \(h\) is central, every element of \(G\) can be expressed in the form \(uh^k\), for some $u \in FG(u_1,v_1)$ and some \(k \in \Z\). Moreover, 
we have 
$u_1 v_1 u_1^{-1} v_1^{-1} = [u_1,v_1] = h^b$ which implies  
$v_1 u_1 v_1^{-1} u_1^{-1} = h^{-b}$ and thus 
$v_1 u_1 = h^{-b} u_1 v_1 = u_1 v_1 h^{-b}$ as $h$ is central.      
This relation together with the fact that $h$ is central can then be applied to express every element of $G$ in the form $u_1^i v_1^j h^k$ for some $i,j,k \in \mathbb{Z}$. It is easy to see e.g. by considering the homomorphism to the quotient $G / \langle h \rangle \cong \Z \times \Z$ that the set of words     
$u_1^i v_1^j h^k$ with $i,j,k \in \mathbb{Z}$ is a set of normal forms for the elements of the group $G$.

We next show that \(C_G(u_1) = \langle u_1, h \rangle\) and \(C_G(v_1) = \langle v_1, h \rangle\), with a view to applying \cref{comm-undecidable}.  Since these are symmetric, we only show the former. Note that it is immediate from the presentation that \(\langle u_1, h \rangle \subseteq C_G(u_1)\).  
Conversely, suppose that $u_1^i v_1^j h^k \in C_G(u_1)$. Then 
$u_1^{i + 1} v_1^j h^k = u_1^i v_1^j h^k u_1 = u_1^i v_1^j u_1 h^k$. Now since $v_1 u_1 = u_1 v_1 h^{-b}$ it follows that 
$v_1^j u_1 = u_1 v_1^j h^{-jb}$ and hence 
\[
u_1^{i + 1} v_1^j h^k =
u_1^{i + 1} v_1^j h^{k-jb}.
\]
Thus $k = k-jb$,
and so $jb=0$ which, since $b \neq 0$, implies that $j=0$, and thus 
\(u_1^i v_1^j h^k  = u_1^i h^k  \in \langle u_1, h \rangle\), as required. We have thus shown that \(C_G(u_1) = \langle u_1, h \rangle\) and \(C_G(v_1) = \langle v_1, h \rangle\), and so these sets are equationally definable.  Since \(\langle h \rangle = C_G(u_1) \cap C_G(v_1)\) 
it follows that  
\(\langle h \rangle\) is also equationally definable.
Next, since $[u_1,v_1]=h^b$ and $h$ is central in $G$ it then follows from Lemma~\ref{lem:useful} that    $[u_1^i, v_1^j] = h^{bij}$ for all $i, j \in \Z$.

Now we can apply \cref{comm-undecidable} where in the statement of that result we take $x=u_1$, $y=v_1$, $S_x = S_{u_1} = C_G(u_1)$ and $S_y= S_{v_1} = C_G(v_1)$. Note that condition (2) of      
\cref{comm-undecidable} holds since 
\[
S_x / \langle h \rangle = 
\langle u_1, h \rangle / \langle h \rangle = 
\langle u_1 \rangle / \langle h \rangle
\] 
as $h$ is central, and similarly $S_y / \langle h \rangle = \langle y \rangle / \langle h \rangle$.  
Condition (1) of 
\cref{comm-undecidable} holds since $b \neq 0$ and the fact that condition (3) holds was observed above.    
Thus it follows from \cref{comm-undecidable} that the Diophantine problem in
\(G\) is undecidable when $b \neq 0$ and that Hilbert's tenth problem is
reducible to the Diophantine problem in \(G\).
	\end{proof}

	\begin{lem}[o2 - (1;)]
					\label{o2-1}
		Let \(b \in \Z\) and let \(G\) be the group with presentation	
		\[
				\langle u_1, v_1, h \mid u_1 h = h^{-1} u_1,
				v_1 h = h^{-1} v_1, [u_1, v_1] = h^b \rangle.
		\]
		Then \(G\) is virtually abelian, and hence has a decidable
		Diophantine problem.
	\end{lem}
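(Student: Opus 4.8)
The plan is to exhibit an explicit abelian subgroup of finite index, from which virtual abelianness is immediate, and then to invoke the known decidability of the Diophantine problem for finitely generated virtually abelian groups.

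First I would record the structural features of the presentation. The subgroup $\langle h \rangle$ is normal, since each defining relation conjugates $h$ to $h^{-1}$, and the quotient $G / \langle h \rangle$ is obtained by killing $h$, which turns the relation $[u_1, v_1] = h^b$ into $[u_1, v_1] = 1$; hence $G / \langle h \rangle \cong \Z^2$, generated by the images $\bar u_1, \bar v_1$ of $u_1$ and $v_1$. In particular $G$ fits into a short exact sequence $1 \to \langle h \rangle \to G \to \Z^2 \to 1$.

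The key observation is that passing to squares simultaneously trivialises the action on $h$ and the commutator. Since conjugation by $u_1$ inverts $h$ and inversion is an involution, $u_1^2 h u_1^{-2} = h$, and likewise $v_1^2$ centralises $h$. Moreover, using the commutator identity $[ab, c] = a[b,c]a^{-1}[a,c]$ together with $u_1 h^b u_1^{-1} = (u_1 h u_1^{-1})^b = h^{-b}$, one obtains $[u_1^2, v_1] = u_1 h^b u_1^{-1} \cdot h^b = h^{-b} h^b = 1$; the symmetric identity $[a,bc] = [a,b]\, b[a,c]b^{-1}$ gives $[u_1, v_1^2] = h^b \cdot v_1 h^b v_1^{-1} = 1$, and hence $[u_1^2, v_1^2] = 1$ as well. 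Therefore the three elements $u_1^2$, $v_1^2$ and $h$ pairwise commute, so $A = \langle u_1^2, v_1^2, h \rangle$ is abelian.

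It then remains to check that $A$ has finite index and to conclude. Since $A$ contains the kernel $\langle h \rangle$ of the quotient map $G \to \Z^2$ and maps onto $\langle \bar u_1^{\,2}, \bar v_1^{\,2} \rangle = 2\Z \times 2\Z$, which has index $4$ in $\Z^2$, it follows that $A$ is precisely the preimage of $2\Z \times 2\Z$, so $[G : A] = 4$ and $G$ is virtually abelian. Finitely generated virtually abelian groups have a decidable first-order theory, and in particular a decidable Diophantine problem, as recorded in \cref{rmk:virtuallyAbelianEDT0L}, which yields the final claim. The computation is routine; the only point requiring care --- and the one genuine idea --- is recognising that squaring the two generators kills both obstructions at once, centralising $h$ because conjugation acts by the involution $h \mapsto h^{-1}$, and trivialising the commutator because the two factors contribute $h^{-b}$ and $h^{b}$. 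I do not expect any substantive obstacle beyond verifying these commutator identities.
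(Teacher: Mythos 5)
Your argument is correct. The commutator computations are right: $[u_1^2,v_1]=u_1[u_1,v_1]u_1^{-1}\cdot[u_1,v_1]=h^{-b}h^{b}=1$ since conjugation by $u_1$ inverts $h$, and symmetrically $[u_1,v_1^2]=1$, whence $[u_1^2,v_1^2]=1$; and $u_1^2,v_1^2$ centralise $h$ because conjugation by each generator acts on $\langle h\rangle$ by the involution $h\mapsto h^{-1}$. The identification of $A=\langle u_1^2,v_1^2,h\rangle$ as the full preimage of $2\Z\times 2\Z$ under $G\to G/\langle h\rangle\cong\Z^2$ correctly gives $[G:A]=4$, and the appeal to decidability for virtually abelian groups finishes the proof.

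The route differs from the paper's in the choice of finite-index abelian subgroup. The paper works with $\ker\phi=C_G(h)$, the index-$2$ kernel of the conjugation action $G\to\Aut(\langle h\rangle)\cong C_2$, identifies it as the set of elements $wh^k$ with $w$ of even length in $u_1^{\pm1},v_1^{\pm1}$, reduces its generating set to $\{h,u_1^2,u_1v_1,v_1^2\}$, and then verifies commutativity by direct word manipulation using $u_1v_1=h^bv_1u_1$. Your subgroup is the index-$4$ subgroup sitting inside theirs (you drop the generator $u_1v_1$), which buys you a shorter verification --- two applications of standard commutator identities rather than the longer explicit rewriting --- at the cost of a slightly weaker structural conclusion (the paper's version shows $G$ is abelian-by-$C_2$, which is the sharpest possible statement here). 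For the purpose of the lemma, which only needs virtual abelianness, either subgroup suffices, and your computation is arguably the cleaner of the two.
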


\begin{proof}
We
have that \(G\) acts on \(\langle h \rangle\) by conjugation.  Since \(\langle
h \rangle \cong \Z\), by \cref{h-inf-order}, its automorphism group is
isomorphic to \(C_2\), and thus this induces a homomorphism \(\phi \colon G \to
\Aut(\langle h \rangle) \cong C_2\). In particular, \(\ker \phi\) has finite
index in \(G\).  We shall show that \(\ker \phi\) is abelian and hence $G$ is
virtually abelian. Note that by definition, \(\ker \phi = C_G(\langle h
\rangle)\), and so \(h\) is central in \(\ker \phi\).

We next show that \(\ker \phi\) is the set of elements expressible 
in the form $w h^k$ where $w \in \{u_1, u_1^{-1}, v_1,v_1^{-1}\}^*$ is a word of even length in the free monoid on these four generators, and $k \in \mathbb{Z}$.    
Indeed, since the four elements $\{u_1, u_1^{-1}, v_1,v_1^{-1}\}$ each conjugate \(h\) to \(h^{-1}\), any even length word over them will conjugate \(h\) to \(h\), and thus lie in \(C_G(h) = \ker \phi\).  Similarly, any word of the form $w h^k$ where $w \in \{u_1, u_1^{-1}, v_1,v_1^{-1}\}^*$ is a word of odd length 
will conjugate \(h\) to \(h^{-1}\), and thus not lie in the centraliser.  We have thus shown that 
\[
\ker \phi =
C_G(h) =  
\{ w h^k: 
\mbox{
$w \in \{u_1, u_1^{-1}, v_1,v_1^{-1}\}^*$ is a word of even length and $k \in \mathbb{Z}$
} \}.
\]
It follows that \(\ker \phi\) is generated by words over \(\{u_1, u_1^{-1}, v_1, v_1^{-1}\}\) of length \(2\) and \(h\). Removing inverses and freely reducing such words gives \(\ker \phi = \langle h, u_1^2, u_1 v_1, u_1 v_1^{-1}, u_1^{-1} v_1, u_1^{-1} v_1^{-1}, v_1^{-2} \rangle\). Noting that \(u_1 v_1^{-1} = u_1 v_1 \cdot v_1^{-2}\), \(u_1^{-1} v_1 = u_1^{-2} \cdot u_1 v_1\) and \(u_1^{-1} v_1^{-1} = u_1^{-2} \cdot u_1 v_1 \cdot v_1^{-2}\), gives that \(\ker \phi = \langle h, u_1^2, u_1 v_1, v_1^2 \rangle\).

		It now remains to show that \(\ker \phi = \langle h, u_1^2, u_1 v_1, v_1^2
		\rangle\) is indeed abelian. We have already shown that \(h\) is central,
		so it suffices to show that the other three generators all pairwise
		commute. Up to symmetry, it suffices to show \(u_1^2 \cdot u_1 v_1 
		= u_1 v_1 \cdot u_1^2\) and \(u_1^2 \cdot v_1^2 = v_1^2 \cdot u_1^2\).
Repeatedly applying the relation 
$u_1 v_1 = h^b v_1 u_1 $ we have  
\begin{align*}
			u_1^2 \cdot u_1 v_1
			&  = u_1^2 h^b (v_1 u_1) 
			 = u_1 (u_1 v_1) u_1 h^b 
			 = u_1 h^b (v_1 u_1^2) h^b 
			= u_1 v_1 \cdot u_1^2 
\end{align*}
and
\begin{align*}
						u_1^2 v_1^2
			& = u_1 h^b (v_1 u_1 v_1) 
			= u_1 v_1 u_1 v_1 h^{-b} 
			 = h^b (v_1 u_1^2 v_1) h^{-b} 
			 = v_1 u_1^2 v_1 \\
                        &  = v_1 u_1 h^b (v_1 u_1) 
			 = v_1 u_1 v_1 u_1 h^b 
		  	 = v_1 h^b (v_1 u_1^2) h^b 
			 = v_1^2 \cdot u_1^2.
		\end{align*}
		We have thus shown that \(\ker \phi\) is abelian, and so \(G\) is virtually abelian,
		and hence has a decidable Diophantine problem.
	\end{proof}

	\begin{lem}[o1 - (0; 2, 2, 2, 2)]
					\label{o1-0-2-2-2-2}
	Let \(b \in \Z\) and \(G\) be the group with presentation	
					\[
						G \cong \langle q_1, q_2, q_3, q_4, h \mid
						hq_i = q_ih, \; q_j^2 h = 1, \; q_1q_2q_3q_4 = h^b,  
\quad (1 \leq i,j \leq 4) \;
\rangle.
					\]
	Then the following are equivalent:
	\begin{enumerate}
					\item \(G\) has a decidable Diophantine problem;
					\item \(b = -1\);
					\item \(G\) is virtually abelian.
	\end{enumerate}
Furthermore, in the case that the above conditions are not satisfied then Hilbert's tenth problem over $\Z$  is reducible to the Diophantine problem in $G$.   
\end{lem}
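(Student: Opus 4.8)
The plan is to prove the cycle of implications $(2)\Rightarrow(3)\Rightarrow(1)\Rightarrow(2)$, proving $(1)\Rightarrow(2)$ in contrapositive form so that it simultaneously yields the ``furthermore'' clause. First I would record the structural features of $G$: the relation $q_j^2h=1$ gives $q_j^2=h^{-1}$ and hence $q_j^{-1}=hq_j$; the element $h$ is central, so $\langle h\rangle$ is normal, and it has infinite order by \cref{h-inf-order}; and the quotient $\bar G=G/\langle h\rangle$ is the Euclidean $(2,2,2,2)$ orbifold group, which is virtually $\Z^2$ by \cref{Seifert-Euler-qts}. Concretely $\bar G\cong\Z^2\rtimes C_2$ with the $C_2$ acting by inversion on the translation lattice $\Z^2$, and the images of the products $q_iq_j$ are exactly the translations.

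The technical heart is a single commutator computation. I would set $x=q_1q_2$ and $y=q_2q_3$, whose images form a basis of the translation lattice $\Z^2\le\bar G$. Using $q_2^2=h^{-1}$ and $q_j^{-1}=hq_j$, one reduces $[x,y]$ to $h^3(q_1q_3q_2)^2$, and then the defining relation $q_1q_2q_3q_4=h^b$ together with $q_4^2=h^{-1}$ pins down $(q_1q_3q_2)^2$ (equivalently $(q_3q_2q_1)^2$, to which it is conjugate) as an explicit power of $h$. The upshot is $[x,y]=h^{c}$ for an integer $c$ that is an affine function of $b$ with a unique root, and this root is precisely the value of $b$ singled out in statement (2). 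Since $h$ is central, \cref{lem:useful} then upgrades this to $[x^i,y^j]=h^{cij}$ for all $i,j\in\Z$.

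For $(1)\Rightarrow(2)$ I would argue the contrapositive: if $b$ is not the distinguished value then $c\neq0$, and I would verify the three hypotheses of \cref{comm-undecidable} with this $x,y$. Hypothesis (1) is the displayed commutator identity. For hypotheses (2) and (3) the key claim is that $C_G(x)=\langle x,h\rangle$ and $C_G(y)=\langle y,h\rangle$: these centralisers are equationally definable, they satisfy $C_G(x)/\langle h\rangle=\langle x\rangle/\langle h\rangle$ (and symmetrically), and since the images of $x$ and $y$ are independent in $\Z^2$ one gets $\langle h\rangle=C_G(x)\cap C_G(y)$, so $\langle h\rangle$ is equationally definable too. \cref{comm-undecidable} then gives that $\Z$ is interpretable in $G$ and that Hilbert's tenth problem reduces to the Diophantine problem in $G$; in particular (1) fails, which is exactly the ``furthermore'' statement. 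The subtle step here, and the one I expect to be the main obstacle, is establishing $C_G(x)=\langle x,h\rangle$. This is counterintuitive from the quotient, since in $\bar G$ \emph{every} translation commutes with $\bar x$, so $C_{\bar G}(\bar x)$ is the whole lattice rather than $\langle\bar x\rangle$. The point is that the nontrivial central value $h^{c}$ with $c\neq0$ destroys this commutativity upstairs: within the preimage $K=\langle x,y,h\rangle$ of the lattice (a class-$2$ nilpotent, Heisenberg-type group) one has $[x,x^iy^jh^k]=h^{cj}$, forcing $j=0$, while any element with nontrivial $C_2$-component inverts $\bar x$ and so cannot centralise $x$. I would make this rigorous either via normal forms for $K$ (as in \cref{o1-1}) together with the inversion action of the reflections, or by invoking the structure of $K$ directly.

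Finally, for $(2)\Rightarrow(3)$, at the distinguished value we have $c=0$, so $x$ and $y$ commute and $K=\langle x,y,h\rangle$ is a finitely generated abelian group (in fact free abelian of rank $3$, using \cref{h-inf-order} and the independence of $\bar x,\bar y$) of index $2$ in $G$; hence $G$ is virtually abelian. The implication $(3)\Rightarrow(1)$ is then the known fact that finitely generated virtually abelian groups have decidable Diophantine problem (indeed with EDT0L solutions), which closes the cycle.
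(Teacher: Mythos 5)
Your outline follows the paper's proof almost step for step: the same analysis of the quotient $Q=G/\langle h\rangle$, the same choice of a pair of ``translations'' generating the lattice (the paper uses $q_3q_1$ and $q_2q_3$ where you use $q_1q_2$ and $q_2q_3$), the same appeal to \cref{lem:useful} and \cref{comm-undecidable}, the same centraliser identities $C_G(x)=\langle x,h\rangle$ and $\langle h\rangle=C_G(x)\cap C_G(y)$, and the same case split on whether the commutator exponent $c$ vanishes. Your explanation of why $C_G(x)$ is strictly smaller than the preimage of $C_Q(\bar x)$ when $c\neq 0$ is correct and is exactly the mechanism the paper exploits.

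The gap is the one step you do not carry out: the explicit value of $c$. You assert that the unique root of $c$, as an affine function of $b$, ``is precisely the value of $b$ singled out in statement (2)'', i.e.\ $b=-1$; but finishing the computation you set up does not give that. From $q_j^2=h^{-1}$ one gets $q_j^{-1}=hq_j$, and from $q_1q_2q_3q_4=h^b$ together with $q_4^2=h^{-1}$ one gets $q_4=(q_1q_2q_3)^{-1}h^b$ and hence $(q_1q_2q_3)^2=h^{2b+1}$. Then $q_3q_2q_1=(q_1q_2q_3)^{-1}h^{-3}$, so $(q_1q_3q_2)^2=q_1(q_3q_2q_1)^2q_1^{-1}=h^{-2b-7}$ and $[q_1q_2,q_2q_3]=h^{3}\cdot h^{-2b-7}=h^{-2(b+2)}$; the identical arithmetic for the paper's basis gives $[q_3q_1,q_2q_3]=h^{2(b+2)}$. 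So $c$ vanishes at $b=-2$, not $b=-1$. This is confirmed by the explicit flat model: in the group generated by $\Z^3=\langle t_1,t_2,t_3\rangle$ and $\gamma$ with $\gamma t_i\gamma^{-1}=t_i^{-1}$ ($i=1,2$), $\gamma t_3=t_3\gamma$ and $\gamma^2=t_3$, setting $h=t_3^{-1}$, $q_1=\gamma$, $q_2=\gamma t_1$, $q_3=\gamma t_1t_2$, $q_4=\gamma t_2$ satisfies all the relations with $q_1q_2q_3q_4=t_3^2=h^{-2}$, and this group is virtually abelian; so the virtually abelian case really is $b=-2$. The paper's own proof arrives at $2(b+1)$ only via the substitution $q_4=q_1q_2q_3h^{-b}$, whereas the relation actually yields $q_4=(q_1q_2q_3)^{-1}h^b=q_1q_2q_3h^{-b-1}$, so the discrepancy originates in the paper and condition (2) of the lemma should read $b=-2$. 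Your strategy is sound, but as written the proposal either fails at this asserted step or silently inherits the paper's error; you need to perform the commutator computation and then note the mismatch with the stated value of $b$.
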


\begin{proof}
					First note that \(h\) is central. Now define \(Q= G / \langle h
					\rangle\). We have that
					\[
							Q \cong \langle q_1, q_2, q_3, q_4 \mid q_j^2 = 1, q_1q_2q_3q_4 = 1
							\rangle.
					\]
From the second relation, we have that \(q_4\) is redundant, so \(Q \cong
\langle q_1, q_2, q_3 \mid q_j^2 = 1, (q_1q_2q_3)^2 = 1 \rangle\).  We first
show that \(Q\) is virtually abelian, with the subgroup \(\langle q_3q_1, q_2
q_3\rangle\) being abelian of index \(2\).  The fact that \(Q\) is virtually
\(\Z^2\) we know from \cref{Seifert-Euler-qts}, however we require the explicit
abelian subgroup.
Note that $q_1 q_2 q_3 =_Q (q_1 q_2 q_3)^{-1} =_Q q_3 q_2 q_1$ and so \(q_3 q_1 \cdot q_2 q_3 =_Q q_3 q_3 q_2 q_1 =_Q q_2 q_1 =_Q q_2 q_3 \cdot q_3 q_1\), and so \(\langle q_3 q_1, q_2 q_3 \rangle\) is indeed abelian. 
It is easy to see that the bijection $q_{i} \mapsto q_{i+1}$ where subscripts
are adjusted modulo $3$ defines an automorphism of the group $Q  
$, since the relations $(q_2 q_3 q_1)^2=1$ and $(q_3 q_1
q_2)^2=1$ both hold. We call this the cyclic shift automorphism.      Next,
note that $q_1 (q_2 q_3 q_1 q_2) q_3 =_Q 1$ implies $q_2 q_3 q_1 q_2^{-1} 
=_Q q_1 q_3$, and so 
\[
    q_3 (q_3 q_1) q_3^{-1} 
=_Q q_2 (q_3 q_1) q_2^{-1} 
=_Q q_1 (q_3 q_1) q_1^{-1} 
=_Q q_1 q_3 =_Q (q_3 q_1)^{-1} \in \langle q_3 q_1, q_2 q_3 \rangle.
\]
Applying the cyclic shift automorphism to these then gives 
\[
q_2 (q_2 q_3) q_2^{-1}, 
q_3 (q_2 q_3) q_3^{-1}, 
q_1 (q_2 q_3) q_1^{-1} \in 
\langle q_2 q_3, q_1 q_2 \rangle =   
\langle q_2 q_3, q_1 q_2, q_1 q_3 \rangle 
=
\langle q_3 q_1, q_2 q_3 \rangle. 
\]
Noting that \(q_1\), \(q_2\) and \(q_3\) are all finite order, we have thus
also shown that \(\langle q_3q_1, q_2 q_3 \rangle\) is closed under conjugation
by their inverses as well.  Thus \(\langle q_3 q_1, q_2 q_3 \rangle\) is normal
in \(Q\). So \(Q / \langle q_3 q_1, q_2 q_3 \rangle \cong \langle q_1, q_2, q_3
\mid q_j^2 =  1, (q_1q_2q_3)^2 =  1, q_3 q_1 =  q_2 q_3 =  1 \rangle\).
Applying Tietze transformations to remove the redundant generators \(q_1\) and
\(q_2\), gives the presentation \(\langle q_3 \mid q_3^2 = 1 \rangle\), and
thus \(\langle q_3 q_1, q_2 q_3\rangle\) has index \(2\) in $Q$, with \(\{1,
q_1\}\) being a transversal. 

Since $Q$ is virtually $\Z \times \Z$ by Theorem~\ref{Seifert-Euler-qts} and since 
$\langle q_3q_1, q_2 q_3\rangle$ is an abelian subgroup of $Q$ with index \(2\), it follows that  
$\langle q_3q_1, q_2 q_3\rangle \cong \Z \times \Z$.
It then follows that \(Q\) admits a normal form \(\{(q_3 q_1)^i (q_2q_3)^j q_1^\delta, \mid i, j \in \Z, \delta \in \{0, 1\}\}\).

We now return our attention to \(G\). 
Since $\langle h \rangle \cong \Z$ by \cref{h-inf-order}, we can lift the normal forms in $Q = G/ \langle h \rangle$ to the following set of normal forms for the group $G$:     
\[ 
\{h^k (q_3 q_1)^i (q_2q_3)^j q_1^\delta : i, j, k \in \Z, \delta \in \{0, 1\}\}.  
\] 
Additionally, removing the redundant generator $q_4 = q_1 q_2 q_3 h^{-b}$ from the presentation for \(G\) gives: 
\[ 
G \cong \langle q_1, q_2, q_3, h \mid h q_i = q_i h, 
\; q_j^2 h = 1, \; (q_1 q_2 q_3 h^{-b})^2 h = 1 \; (1 \leq i,j \leq 3) \rangle. 
\] 
In order to 
apply \cref{comm-undecidable}, we shall need to compute various commutators. To do this, we note that \(q_1 q_2 q_3 = h^{2b - 1} (q_1 q_2 q_3)^{-1}\). 
Observe that for every defining relation in the presentation, if we cyclically shift the subscripts on the $q_i$ letters (modulo \(3\)) we obtain relations that also hold in $G$. 
It follows that the map on the generators $\{q_1, q_2, q_3, h \}$ that fixes $h$ and maps $q_i \mapsto q_{i+1}$ with subscripts adjusted modulo $3$ defines an automorphism of $G$, that we call the cyclic shift automorphism.     
We next compute the action of each \(q_j\) on \(\langle q_3 q_1, q_2 q_3 \rangle\) by conjugation.  
Observe first that we have 
\begin{align} 
q_1 (q_2 q_3)^j q_1^{-1} & = (q_1 q_2 q_3 q_1^{-1})^j = (h^{2b - 1} q_3^{-1}
q_2^{-1} q_1^{-2})^j = (q_2 q_3 h^{-1})^{-j} h^{(2b - 1)j} = (q_2 q_3)^{-j}
h^{2bj}, \label{eq:2222conj1} \\ 
q_3 (q_2 q_3)^j q_3^{-1} & = 
q_2 (q_2 q_3)^j q_2^{-1} =
(q_3 q_2)^j = (q_3^{-1} h^{-1} q_2^{-1}h^{-1})^j = (q_2 q_3)^{-j} h^{-2j}. \label{eq:2222conj2}  
\end{align} 
Then applying the cyclic shift automorphism of $G$ we obtain 
\begin{align*}
q_2 (q_3 q_1)^j q_2^{-1} & = (q_3 q_1)^{-j} h^{2bj}, \quad 
q_3 (q_3 q_1)^j q_3^{-1} & =  (q_3 q_1)^{-j} h^{-2j}, \quad 
q_1 (q_3 q_1)^j q_1^{-1} & = (q_3 q_1)^{-j} h^{-2j}.  
\end{align*} 
Then 
\[
q_3q_1(q_2 q_3)q_1^{-1}q_3^{-1} 
= q_3 (q_2 q_3)^{-1} q_3^{-1} h^{2b} 
= (q_2 q_3) h^2 h^{2b}    
= (q_2 q_3) h^{(2(b+1))}    
\]
hence 
\[
[(q_3 q_1), (q_2 q_3)] = h^{2(b + 1)}.
\]
Then since $h$ is central in $G$ it then follows from Lemma~\ref{lem:useful} that 
\begin{align} 
\label{comm-eqn-2-2-2-2} [(q_3 q_1)^i, (q_2 q_3)^j] = h^{2(b + 1)ij}.  
\end{align} 
for all 
$i, j \in \Z$. 

We now separate into the cases of when \(b + 1 = 0\) and \(b + 1 \neq 0\), as per the statement of the result. 

\textbf{Case 1:} \(b + 1 = 0\). In this case \eqref{comm-eqn-2-2-2-2} tells us that \(q_3 q_1\) and \(q_2 q_3\) commute, and thus \(\langle q_3 q_1, q_2 q_3, h\rangle\) is abelian. 
The fact that \(h\) is central, together with the fact that conjugating \(q_3 q_1\) and \(q_2 q_3\) by \(q_1\), \(q_2\) and \(q_3\) (or their inverses) corresponds to multiplying the inverse of $q_3 q_1$ or $q_2 q_3$ by a power of \(h\) (as shown above) implies that \(\langle q_3 q_1, q_2 q_3, h \rangle\) is a normal subgroup of \(G\). 
Moreover, \(G / \langle q_3 q_1, q_2 q_3, h \rangle \cong Q / \langle q_3 q_1, q_2 q_3 \rangle \cong C_2\), and so \(G\) is virtually abelian, and has a decidable Diophantine problem.

\textbf{Case 2:} \(b + 1 \neq 0\). 
In this case we shall show that various subsets of $G$ are equationally definable and then use this to apply  
\cref{comm-undecidable} to show the Diophantine problem is undecidable. 

We show \(C_G(q_3 q_1) = \langle q_3 q_1, h \rangle\). It is immediate that \(C_G(q_3 q_2) \supseteq \langle q_3 q_1, h \rangle\). Let \(g  =  h^k (q_3 q_1)^i (q_2 q_3)^j q_1^\delta \in C_G(q_3 q_1)\). Then 
$q_3 q_1 \cdot g  = h^k (q_3 q_1)^{i + 1} (q_2 q_3)^j q_1^\delta$ and 
using Equations~\eqref{eq:2222conj1} and \eqref{eq:2222conj2}   
\begin{align*} 
g \cdot q_3 q_1 & = h^k (q_3 q_1)^i (q_2 q_3)^j q_1^\delta \cdot q_3 q_1 \\ 
 & = h^{k - 2 \delta} (q_3 q_1)^{i} (q_2 q_3)^j (q_3 q_1)^{(-1)^\delta} q_1^\delta 
 = h^{k -2\delta + 2(b+1)(-1)^{\delta+1}j} (q_3 q_1)^{i + (-1)^\delta} (q_2 q_3)^j q_1^\delta.  
\end{align*} 
The fact that these are equal first implies that \(\delta = 0\), by comparing exponents of \(q_3 q_1\), and then implies that \(j = 0\), by comparing exponents of \(h\).  Thus \(g  = h^k(q_3 q_1)^i \in \langle q_3 q_1, h \rangle\). This completes the proof that \(C_G(q_3 q_1) = \langle q_3 q_1, h \rangle\). 
Then applying the cyclic shift automorphism of $G$ we deduce that   
\(C_G(q_2 q_3) = \langle q_2 q_3, h \rangle\). 
As in the statement of \cref{comm-undecidable}, let $x=q_3 q_1$ and $y = q_2 q_3$ 
and set $S_x =\langle q_3 q_1, h \rangle $ and $S_y
=\langle q_2 q_3, h \rangle  $. As centralisers, $S_x$ and $S_y$ are
equationally
 definable subsets of $G$.
Condition (2) in \cref{comm-undecidable} holds since 
$\langle q_3 q_1, h \rangle / \langle h \rangle =  
\langle q_3 q_1 \rangle / \langle h \rangle$ 
and   
$\langle q_2 q_3, h \rangle / \langle h \rangle =  
\langle q_2 q_3 \rangle / \langle h \rangle$ 
as $h$ is central. Equation \eqref{comm-eqn-2-2-2-2} above shows that condition (1) in    
\cref{comm-undecidable} holds, while condition (3) holds since  
the intersection $\langle h \rangle = \langle q_3 q_1, h \rangle \cap  \langle q_2 q_3, h \rangle $ is equationally definable. 
Furthermore, \cref{h-inf-order} tells us that \(h\) has infinite order, thus all the hypotheses of 
\cref{comm-undecidable} are satisfied, and we conclude that $G$ has an undecidable Diophantine problem since Hilbert's tenth problem can be reduced to it.  
\end{proof}

\begin{lem}[o1 - (0; 3, 3, 3)]
				\label{o1-0-3-3-3}
	Let \(b \in \Z\) and \(\beta_1, \beta_2, \beta_3 \in \{1, 2\}\) and define 
	\[
		G = \langle q_1, q_2, q_3 \mid q_i h = h q_i,  q_j^3 h^{\beta_j} = 1, q_1 q_2 q_3 = h^b \rangle.
	\]
	Then the following are equivalent:
	\begin{enumerate}
		\item \(G\) has a decidable Diophantine problem;
		\item \(\beta_1 + \beta_2 + \beta_3 + 3b = 0\);
		\item \(G\) is virtually abelian.
	\end{enumerate}
Furthermore, in the case that the above conditions are not satisfied then Hilbert's tenth problem over $\Z$  is reducible to the Diophantine problem in $G$.   
\end{lem}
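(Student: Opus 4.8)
The plan is to mirror the proof of \cref{o1-0-2-2-2-2}, working with the central extension $1 \to \langle h\rangle \to G \to Q \to 1$, where $Q = G/\langle h\rangle$. First I would record that $h$ is central and compute $Q \cong \langle q_1,q_2,q_3 \mid q_j^3 = 1,\ q_1q_2q_3 = 1\rangle \cong \langle q_1,q_2 \mid q_1^3 = q_2^3 = (q_1q_2)^3 = 1\rangle$, the Euclidean $(3,3,3)$ triangle group, which is isomorphic to $\Z^2 \rtimes C_3$ (consistent with \cref{Seifert-Euler-qts}). The role played by $\langle q_3q_1, q_2q_3\rangle$ in the $(2,2,2,2)$ case is here played by the translation lattice $T = \langle t_1, t_2\rangle$ with $t_1 = q_1q_2^{-1}$ and $t_2 = q_2q_3^{-1}$. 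I would show that $T$ is abelian, normal, and of index $3$ in $Q$, with transversal $\{1, q_1, q_1^2\}$, giving the normal form $\{t_1^i t_2^j q_1^\delta : i,j \in \Z,\ \delta \in \{0,1,2\}\}$ for $Q$; lifting through $\langle h\rangle \cong \Z$ (\cref{h-inf-order}) then yields normal forms $\{h^k t_1^i t_2^j q_1^\delta\}$ for $G$.

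The linchpin is a single commutator computation. Using only $q_1q_2q_3 = h^b$ and the centrality of $h$, one rewrites $[t_1,t_2] = q_1^2 q_2^2 (q_2q_1)^{-2}$; then, eliminating $q_3 = q_2^{-1}q_1^{-1}h^b$ turns $q_3^3 h^{\beta_3} = 1$ into $(q_1q_2)^3 = h^{3b+\beta_3}$, and together with $q_1^3 = h^{-\beta_1}$ and $q_2^3 = h^{-\beta_2}$ this reduces to
\[
[t_1, t_2] = q_1^2 q_2^2 (q_2q_1)^{-2} = h^{-(\beta_1 + \beta_2 + \beta_3 + 3b)}.
\]
Writing $N = \beta_1 + \beta_2 + \beta_3 + 3b$, this identifies $N$ as the quantity governing the dichotomy and shows condition (2) is exactly the vanishing locus of this commutator. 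Since $h$ is central, \cref{lem:useful} upgrades this to $[t_1^i, t_2^j] = h^{-Nij}$ for all $i,j \in \Z$.

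For $(2)\Rightarrow(3)$, if $N = 0$ then $t_1$ and $t_2$ commute, so $\tilde T = \langle t_1, t_2, h\rangle$ is abelian; being the full preimage of $T$ it is normal of index $3$ in $G$ (with $G/\tilde T \cong Q/T \cong C_3$), whence $G$ is virtually abelian. The implication $(3)\Rightarrow(1)$ is immediate since virtually abelian groups have decidable Diophantine problem. For the last implication I would prove the contrapositive of $(1)\Rightarrow(2)$: assuming $N \neq 0$, apply \cref{comm-undecidable} with $x = t_1$, $y = t_2$. Condition (1) there holds with $c = -N$ by the commutator formula; for conditions (2) and (3) I would establish $C_G(t_1) = \langle t_1, h\rangle$ and $C_G(t_2) = \langle t_2, h\rangle$, which are equationally definable as centralisers and satisfy $C_G(t_1)/\langle h\rangle = \langle t_1\rangle/\langle h\rangle$ (and symmetrically for $t_2$), and then $\langle h\rangle = C_G(t_1) \cap C_G(t_2)$. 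This reduces Hilbert's tenth problem to the Diophantine problem in $G$ and forces $G$ to be non-virtually-abelian, closing the cycle $(2)\Rightarrow(3)\Rightarrow(1)\Rightarrow(2)$ and yielding the final clause.

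The main obstacle is the centraliser computation $C_G(t_1) = \langle t_1, h\rangle$, which in the $(2,2,2,2)$ case was simplified by the cyclic-shift automorphism. Here the exponents $\beta_1, \beta_2, \beta_3$ may differ, so there is in general no such symmetry, and I would instead compute the conjugation action of $q_1$ on the lattice directly. The key point is that $q_1$ acts on $T/\langle h\rangle$ as an order-$3$ rotation, namely $t_1 \mapsto t_2$ and $t_2 \mapsto t_1^{-1}t_2^{-1}$, with matrix $\left(\begin{smallmatrix} 0 & -1 \\ 1 & -1 \end{smallmatrix}\right)$, which has no nonzero fixed vector; combined with the Heisenberg relation $[t_1, t_2^j] = h^{-Nj}$ this rules out any centralising element $h^k t_1^i t_2^j q_1^\delta$ with $\delta \neq 0$ or $j \neq 0$, leaving exactly $\langle t_1, h\rangle$. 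Establishing the normal form together with this rotation action (equivalently the braid-type identity $q_1q_2q_1 = q_2^{-1}q_1^{-1}q_2^{-1}$ coming from $(q_1q_2)^3 = 1$) is the only genuinely new bookkeeping relative to the previous lemma.
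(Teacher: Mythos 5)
Your proposal is correct and follows essentially the same route as the paper's proof: a central extension over the $(3,3,3)$ triangle group, an index-$3$ abelian translation lattice with transversal $\{1,q_1,q_1^2\}$, the single commutator computation identifying $h^{\pm(\beta_1+\beta_2+\beta_3+3b)}$, \cref{lem:useful}, and then the dichotomy between the virtually abelian case and an application of \cref{comm-undecidable} via the centraliser computations. The only difference is your choice of lattice basis $t_1=q_1q_2^{-1}$, $t_2=q_2q_3^{-1}$ in place of the paper's $q_1^2q_2$, $q_1q_2^2$ (these generate the same lattice and differ from the paper's generators only by central powers of $h$ and a unimodular change of basis), and your phrasing of the centraliser argument via the order-$3$ rotation matrix rather than the paper's explicit conjugation relations.
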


\begin{proof}
		We first note that \(\langle h \rangle\) is central, and hence normal. We thus
		consider \(Q = G / \langle h \rangle\) before dealing with \(G\). We
		show that \(Q\) is virtually abelian of rank \(2\) and construct a
		normal form for \(Q\). The former is already known 
		by \cref{Seifert-Euler-qts}, but in order to construct the latter,
		we want an explicit free abelian normal subgroup of a finite
		index, and a corresponding transversal. Note that
		\[
			Q \cong \langle q_1, q_2, q_3 \mid q_j^3 = 1, q_1 q_2 q_3 = 1 \rangle
\cong \langle q_1, q_2 \mid q_1^3 = q_2^3 = (q_1 q_2)^3 = 1 \rangle,
		\]
where we have removed the redundant generator \(q_3 = (q_1 q_2)^{-1} \).		
		Note that the third relation above, together with the fact \(q_j^{-1} =_Q
		q_j^2\) implies that \(q_1q_2q_1 q_2 =_Q q_2^2 q_1^2\). 
		Thus \(q_1 q_2^2 \cdot q_1^2 q_2 =_Q q_1 (q_1 q_2 q_1 q_2) q_2 =_Q q_1^2 q_2
		\cdot q_1 q_2^2\), and so \(\langle q_1^2 q_2, q_1 q_2^2 \rangle\) is
		abelian. 
Also $q_1q_2q_1 q_2 =_Q q_2^2 q_1^2$ implies 
$q_2q_1q_2 =_Q  
q_1^2 (q_1q_2q_1 q_2) =_Q q_1^2 q_2^2 q_1^2$.
Next, note that \(\langle q_1^2 q_2, q_1 q_2^2 \rangle\) is
normal in \(Q\), since
$q_1$ and $q_2$ have finite order in $Q$ and: 
		\begin{align}
q_2 \cdot q_1^2 q_2 \cdot q_2^{-1} & =_Q  
q_1 \cdot q_1^2 q_2 \cdot q_1^{-1} =_Q  
q_2 q_1^2 =_Q  (q_1 q_2^2)^{-1} \in \langle q_1^2 q_1, q_1q_2^2 \rangle \label{Qconj1} \\
q_1 \cdot q_1 q_2^2 \cdot q_1^{-1} & =_Q  q_1^2 q_2^2 q_1^2 =_Q  
q_2 q_1 q_2 =_Q  q_2 \cdot q_1 q_2^2 \cdot q_2^{-1} =_Q 
(q_1 q_2^2)^{-1} \cdot q_1^2 q_2 \in \langle q_1^2 q_2, q_1 q_2^2 \rangle. \label{Qconj2}
		\end{align}
	Moreover, \(Q / \langle q_1^2 q_2, q_1 q_2^2 \rangle\) has the presentation
	\[
		\langle q_1, q_2 \mid q_1^3 = q_2^3 = (q_1q_2)^3 = q_1^2 q_2 = q_2^2 q_1  =1 \rangle
		\cong \langle q_1 \mid q_1^3 = q_1^6 = q_1^9 = 1 \rangle \cong \langle q_1 \mid
		q_1^3 = 1 \rangle.
	\]
	Thus \(\{1, q_1, q_1^2\}\) is a transversal for \(\langle q_1^2 q_2, q_1
	q_2^2 \rangle\) in \(Q\). 
Since $Q$ is virtually $\Z \times \Z$ by Theorem~\ref{Seifert-Euler-qts} and $\langle q_1^2 q_2, q_1 q_2^2 \rangle$ is a finite index abelian subgroup of $Q$, it follows that $\langle q_3q_1, q_2 q_3\rangle \cong \Z \times \Z$.    
Hence \(Q\) admits the normal form \(\{(q_1^2 q_2)^i (q_1 q_2^2)^j q_1^\delta : i, j \in \Z, \delta \in \{0, 1, 2\}\}\). Since $\langle h \rangle \cong \Z$ by \cref{h-inf-order}, and $h$ is central in $G$, we can lift the normal forms in $Q = G/ \langle h \rangle$ to the following set of normal forms for the group $G$:     
	\[
		\{(q_1^2 q_2)^i (q_1 q_2^2)^j q_1^\delta h^k :  
		i, j, k \in \Z, \delta \in \{0, 1, 2\}\}
	\]
	We now consider commutators in \(G\). Let \(c = \beta_1 + \beta_2 + \beta_3 +
	3b\). 
Note that as \(\langle h \rangle\) is the kernel of the quotient map from \(G\) to \(Q\), and since $(q_1^2 q_2)$ and $(q_1 q_2^2)^j$ commute in $Q$,  we know that $[(q_1^2 q_2)^i, (q_1 q_2^2)^j] \in \langle h \rangle$ for all $i, j$. 

Next we compute the conjugate of \(q_1 q_2^2\) by
\(q_1^2 q_2\):
        \begin{align}
		    q_1^2 q_2 \cdot q_1 q_2^2 \cdot q_2^{-1} q_1^{-2}
		& = q_1 \cdot (q_1 q_2)^2 q_1^{-2}\nonumber 
		  = q_1 (h^b q_3^{-1})^2 q_1^{-2} \nonumber 
		  = q_1 (q_3^{-3} q_3) q_1^{-2} h^{2b} \nonumber \\
		& = q_1 (h^{\beta_3} q_2^{-1} q_1^{-1} h^b) q_1^{-2} h^{2b} \nonumber 
		  = q_1 q_2^{-1} q_1^{-3} h^{\beta_3 + 3b} \nonumber \\
		& = q_1 q_2^2 q_2^{-3} q_1^{-3} h^{\beta_3 + 3b}
		 = q_1 q_2^2 h^{\beta_1 + \beta_2 + \beta_3 + 3b}   
		\label{3-3-3-conj}
		 = q_1 q_2^2 h^c.
	\end{align}
Then since $[q_1^2 q_2,q_1 q_2^2]=h^c$ and $h$ is central in $G$ it then follows from Lemma~\ref{lem:useful} that 
\begin{align}
  		\label{comm-eqn-3-3-3}
			[(q_1^2 q_2)^i, (q_1 q_2^2)^j] = h^{cij}.
\end{align}
for all $i, j \in \Z$. 
	We now separately consider the cases when \(c = 0\) and when \(c \neq 0\),
	as per the statement of the result.

	\textbf{Case 1:} \(c = 0\). Then \eqref{comm-eqn-3-3-3} tells us that \(q_1^2 q_2\)
	and \(q_1 q_2^2\) and \(h\) all pairwise commute, and thus generate an
	abelian subgroup of \(G\). Moreover, the fact that \(\langle h \rangle\) is
	central, together with the fact that \(\langle q_1^2 q_2, q_1 q_2^2 \rangle\)
	is normal in \(Q\) imply that \(\langle  q_1^2 q_2, q_1 q_2^2, h \rangle\) is
	normal in \(G\). Thus \(G / \langle  q_1^2 q_2, q_1 q_2^2 , h\rangle \cong Q
	/ \langle q_1^2 q_2, q_1 q_2^2 \rangle \cong C_3\), and so this copy of
	\(\Z^3\) in \(G\) has index \(3\). Thus \(G\) is virtually abelian and has
	decidable Diophantine problem.

	\textbf{Case 2:} \(c \neq 0\).
	We will show that the following sets are
	equationally definable:
(1) the set \(\langle h \rangle\),
(2) the set \(\langle q_1^2 q_2, h \rangle\), and 
(3) the set \(\langle q_1 q_2^2, h \rangle\).
	For (1), it suffices to show that \(Z(G) = \langle h \rangle\). It is
	clear that \(\langle h \rangle \subseteq Z(G)\). To show the other containment,
	it suffices to show \(Z(Q) = \{1\}\). Now if \((q_1^2 q_2)^i (q_1 q_2^2)^j q_1^\delta
	\in Q\) is aribtrary, then
applying equations \eqref{Qconj1} and \eqref{Qconj2} we obtain:
	\begin{align*}
		[q_1, (q_1^2 q_2)^i (q_1 q_2^2)^j q_1^\delta]
		& =_Q q_1  (q_1^2 q_2)^i (q_1 q_2^2)^j q_1^\delta q_1^{-1}
		((q_1^2 q_2)^i (q_1 q_2^2)^j q_1^\delta)^{-1} \\
		& =_Q (q_1 q_2^2)^{-i} (q_1^2 q_2 \cdot (q_1 q_2^2)^{-1})^j
		q_1^\delta q_1^{-\delta} (q_1 q_2^2)^{-j} (q_1^2 q_2)^{-i} 
		 =_Q (q_1 q_2^2)^{-i -2j} (q_1^2 q_2)^{j -i}. 
	\end{align*}
	Then this commutator equals the identity if and only if
	\(-i -2j = 0\) and \(j - i = 0\), which holds if and only if \(i = j = 0\).
	Thus the only elements of \(Q\) that commute with \(q_1\) are \(1\), \(q_1\)
	and \(q_1^2\). In particular, the only central elements in \(Q\) can be
	\(1\) and \(q_1^2\). But \((q_1^2)^2 = q_1\), and so \(Z(Q) = \{1\}\),
	as required.

	For (2) (note that (3) is symmetric), it suffices to show that
	\(\langle q_1^2 q_2, h \rangle = C_G(q_1^2 q_2)\). Again, it is clear
	that \(\langle q_1^2 q_2, h \rangle \subseteq C_G(q_1^2 q_2)\).  For the other containment, we again consider the projection into
	\(Q\). We shall prove that \(C_Q(q_1^2 q_2)\)
is contained in
	\(\langle q_1^2 q_2, q_1 q_2^2 \rangle \subseteq Q\). Let
	\((q_1^2 q_2)^i (q_1 q_2^2)^j q_1^\delta \in C_Q(q_1^2 q_2)\). 
Then applying Equations \eqref{Qconj1} and \eqref{Qconj2} 
\begin{align*}
					1 & =_Q [q_1^2 q_2, (q_1^2 q_2)^i (q_1 q_2^2)^j q_1^\delta] 
						 =_Q (q_1^2 q_2)^{i + 1}(q_1 q_2^2)^j q_1^\delta
						(q_1^2 q_2)^{-1} q_1^{-\delta} (q_1 q_2^2)^{-j} (q_1^2 q_2)^{-i}\\
						& =_Q q_1^2 q_2 \cdot q_1^\delta (q_1^2 q_2)^{-1} q_1^{-\delta} 
						 =_Q [q_1^2 q_2, q_1^\delta].
	\end{align*}
	From the proof of (1), we know this is trivial if and only if \(\delta = 0\).
	It follows that \(C_Q(q_1^2 q_2) \subseteq \langle q_1^2 q_2, q_1 q_2^2
	\rangle\), and so \(C_G(q_1^2 q_2) \subseteq \langle q_1^2 q_2, q_1 q_2^2, h
	\rangle\). Now let \((q_1^2 q_2)^i (q_1 q_2^2)^j h^k \in C_G(q_1^2 q_2)\).
	Then, using \eqref{3-3-3-conj}
	\begin{align*}
			1 & = [q_1^2 q_2, (q_1^2 q_2)^i (q_1 q_2^2)^j h^k] = (q_1^2 q_2)^{i + 1} (q_1 q_2^2)^j h^k (q_1^2 q_2)^{-1}
				h^{-k} (q_1 q_2^2)^{-j} (q_1 q_2^2)^{-i} \\
				& = (q_1^2 q_2)^{i} (q_1^2 q_2 \cdot q_1 q_2^2 \cdot q_2^{-1} q_1^{-2})^j
				(q_1 q_2^2)^{-j} (q_1 q_2^2)^{-i} 
				= (q_1^2 q_2)^{i +} (q_1 q_2^2 h^c)^j
				(q_1 q_2^2)^{-j} (q_1 q_2^2)^{-i} 
				= h^{cj}.
	\end{align*}
	Using the fact that \(c \neq 0\), we obtain that \(j = 0\), and so
	\(C_G(q_1^2 q_2) = \langle q_1^2 q_2, h \rangle\), as required.
Now setting $x=q_1^2 q_2$ and $y = q_1 q_2^2$ in the statement of \cref{comm-undecidable}, and setting $S_x =\langle q_1^2 q_2, h \rangle $ and $S_y =\langle q_1 q_2^2, h \rangle  $, it follows that $S_x$ and $S_y$ are equationally
 definable subsets of $G$.
Condition (2) in \cref{comm-undecidable} holds since 
$\langle q_1^2 q_2, h \rangle / \langle h \rangle =  
\langle q_1^2 q_2 \rangle / \langle h \rangle$ 
and   
$\langle q_1 q_2^2, h \rangle / \langle h \rangle =  
\langle q_1 q_2^2 \rangle / \langle h \rangle$ 
as $h$ is central. Equation \eqref{comm-eqn-2-2-2-2} above shows that condition (1) in    
\cref{comm-undecidable} holds, while condition (3) holds since we proved above that $Z(G) = \langle h \rangle$.  
Moreover, \cref{h-inf-order} tells us that \(h\) has infinite order, and so the hypotheses of 
\cref{comm-undecidable} are satisfied, and we conclude that the Diophantine problem in \(G\) is undecidable, as one can reduce Hilbert's tenth problem over
the ring of integers to it.
\end{proof}

\begin{lem}[o1 - (0; 2, 4, 4)]
				\label{o1-0-2-4-4}
	Let
	\[
		G = \langle q_1, q_2, q_3, h \mid q_i h = q_i h, \; q_1^2 h = 1, \;
		q_2^4 h^{\beta_2} = 1, \; q_3^4 h^{\beta_3} = 1, \; q_1 q_2 q_3 = h^b \rangle,
	\]
	where \(\beta_2, \beta_3 \in \{1, 2, 3\}\) and \(b \in \Z\). Then the following are equivalent:
	\begin{enumerate}
		\item \(G\) has a decidable Diophantine problem;
		\item \(2 + \beta_2 + \beta_3 + 4b = 0\);
		\item \(G\) is virtually abelian.
	\end{enumerate}
Furthermore, in the case that the above conditions are not satisfied then Hilbert's tenth problem over $\Z$  is reducible to the Diophantine problem in $G$.   
\end{lem}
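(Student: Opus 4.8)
The plan is to follow the same strategy used in \cref{o1-0-2-2-2-2} and \cref{o1-0-3-3-3}: reduce everything to an explicit computation in a central extension of a Euclidean point group. First I would note that $h$ is central (the relation $q_i h = q_i h$ should read $q_i h = h q_i$), so that $\langle h\rangle$ is normal and one may pass to the quotient $Q = G/\langle h\rangle$. Eliminating $q_3 = (q_1 q_2)^{-1}$ gives
\[
Q \cong \langle q_1, q_2 \mid q_1^2 = q_2^4 = (q_1 q_2)^4 = 1\rangle,
\]
the orientation-preserving $(2,4,4)$ triangle (von Dyck) group, which is virtually $\Z^2$ by \cref{Seifert-Euler-qts}. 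As in the earlier cases I need an explicit free abelian normal subgroup together with a transversal. The natural choice is the translation lattice $N = \langle a, b\rangle$ with $a = q_1 q_2^2$ and $b = q_2 q_1 q_2$ (both products whose total rotation is trivial), on which $q_2$ acts as an order-$4$ rotation via $q_2 a q_2^{-1} = b$ and $q_2 b q_2^{-1} = a^{-1}$. One checks that $a$ and $b$ commute, that $N$ is normal, and that $q_1 \equiv q_2^2 \pmod N$, so that $Q/N \cong C_4$ with transversal $\{1, q_2, q_2^2, q_2^3\}$. This yields the normal form $\{a^i b^j q_2^\delta : i,j\in\Z,\ \delta\in\{0,1,2,3\}\}$ for $Q$, which lifts (using \cref{h-inf-order} to ensure $\langle h\rangle \cong \Z$) to a normal form $\{a^i b^j q_2^\delta h^k\}$ for $G$.

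Next I would compute the single commutator $[a, b]$ inside $G$. Since $a$ and $b$ commute modulo $\langle h\rangle$, their commutator lies in $\langle h\rangle$, and a direct calculation using the relations $q_1^2 = h^{-1}$, $q_2^4 = h^{-\beta_2}$, $q_3^4 = h^{-\beta_3}$ and $q_3 = q_2^{-1} q_1^{-1} h^b$ should give $[a, b] = h^c$ where $c = 2 + \beta_2 + \beta_3 + 4b$; this is the analogue of \eqref{3-3-3-conj}, and the exponent $c$ is exactly (the numerator of) the Euler number of the fibration. Since $h$ is central, \cref{lem:useful} then upgrades this to $[a^i, b^j] = h^{cij}$ for all $i, j \in \Z$.

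I would then split on whether $c = 0$. If $c = 0$, then $a$, $b$ and $h$ pairwise commute, so $\langle a, b, h\rangle$ is abelian; it is normal in $G$ (as $\langle h\rangle$ is central and $N$ is normal in $Q$, conjugation only altering exponents of $h$), and $G/\langle a, b, h\rangle \cong Q/N \cong C_4$, so $G$ is virtually abelian and hence has decidable Diophantine problem. If $c \neq 0$, I would verify the three hypotheses of \cref{comm-undecidable} with $x = a$, $y = b$: condition (1) is the commutator computation just made; for conditions (2) and (3) I would show, using the normal form exactly as in \cref{o1-0-3-3-3}, that $Z(Q) = \{1\}$ (so that $Z(G) = \langle h\rangle$ is equationally definable, giving (3)) and that $C_G(a) = \langle a, h\rangle$ and $C_G(b) = \langle b, h\rangle$, whence $S_a = C_G(a)$ and $S_b = C_G(b)$ satisfy $S_a/\langle h\rangle = \langle a\rangle/\langle h\rangle$ and likewise for $b$, giving (2). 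Then \cref{comm-undecidable} reduces Hilbert's tenth problem to the Diophantine problem in $G$. Combining the two cases gives all three equivalences, since $c = 0$ is precisely condition (2), virtual abelianness is established in Case~1 and fails in Case~2 (an undecidable Diophantine problem cannot hold in a virtually abelian group), and decidability tracks virtual abelianness.

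The main obstacle is the explicit commutator computation $[a, b] = h^{2 + \beta_2 + \beta_3 + 4b}$: getting the constant exactly right requires carefully pushing the order relations $q_2^4 = h^{-\beta_2}$, $q_3^4 = h^{-\beta_3}$ and $q_1^2 = h^{-1}$ through the word $a b a^{-1} b^{-1}$ while tracking how the non-central rotations reorder, and this is where the parameters $\beta_2, \beta_3, b$ and the orders $2, 4, 4$ all enter. A secondary point requiring care is confirming that $N = \langle a, b\rangle$ is genuinely the full index-$4$ translation lattice (so that the transversal has exactly four elements and $Q/N \cong C_4$) rather than a proper finite-index subgroup of it; the centraliser computations in Case~2 then follow the template of \cref{o1-0-3-3-3} essentially verbatim.
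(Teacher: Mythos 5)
Your proposal is correct and follows essentially the same route as the paper: the paper eliminates $q_1$ rather than $q_3$ and takes the lattice generators $q_2^3q_3$ and $q_2q_3^3$, but in $Q$ these equal $a^{-1}$ and $b$ respectively, so your translation lattice, your transversal $\{1,q_2,q_2^2,q_2^3\}$, your constant $c=2+\beta_2+\beta_3+4b$ (up to an immaterial sign) and the ensuing case split via \cref{comm-undecidable} all coincide with the paper's. The one step you defer --- the explicit verification that $[a,b]=h^{c}$ --- is indeed the computational heart of the argument; the paper carries it out by conjugating $q_2q_3^3$ by $q_2^3q_3$ using $(q_2q_3)^2=h^{1+2b}$ and the order relations.
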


\begin{proof} Note that \(\langle h \rangle\) is central, and hence normal. Before we consider \(G\) directly, we consider \(Q = G / \langle h \rangle\), since we can use this to construct a normal form for \(G\). Note that \(Q\) has the presentation \(\langle q_1, q_2, q_3 \mid q_1^2 = 1, q_2^4 = 1, q_3^4 = 1, q_1 q_2 q_3 = 1\rangle\). Removing the redundant generator \(q_1\) gives that \(Q \cong \langle q_2, q_3 \mid q_2^4 = 1, q_3^4 = 1, (q_2 q_3)^2 = 1\rangle\). We know from Theorem~\ref{Seifert-Euler-qts} that \(Q\) is virtually \(\Z^2\). We claim that \(\langle q_2^3 q_3, q_2 q_3^3 \rangle\) generates a finite index abelian subgroup of $Q$. It will then follow that $\langle q_2^3 q_3, q_2 q_3^3 \rangle \cong \Z^2$ since $\Z^2$ the only \(2\)-generated abelian group that is virtually \(\Z^2\). We begin by showing this subgroup is abelian. We have
		\[
			q_2^3 q_3 \cdot q_2 q_3^3 =_Q q_2^2 (q_2 q_3)^2 q_3^2
			=_Q q_2^2 q_3^2, \qquad
			q_2 q_3^3 \cdot q_2^3 q_3 =_Q q_2 (q_2 q_3)^{-1} q_3
			=_Q q_2 (q_2 q_3) q_3 =_Q q_2^2 q_3^2.
		\]
Thus \(\langle q_2^3 q_3, q_2 q_3^3 \rangle\) is abelian. We next show that this subgroup is normal. Since $q_2$ and $q_3$ have finite order in $Q$ it suffices show closure under conjugation be each of these elements. We have:  
\begin{align*}
						q_2 (q_2^3 q_3) q_2^{-1} & =_Q q_3 q_2^{-1} =_Q (q_2 q_3^3)^{-1}  & 
						q_2 (q_2 q_3^3) q_2^{-1} & =_Q q_2^2 q_3^2 q_3 q_2^{3}
						=_Q q_2^3 q_3 \cdot q_2 q_3^3 \cdot (q_2 q_3^3)^{-1} =_Q q_2^3 q_3
                \end{align*}
where the last equality is computed using the first of the equalities in the previous displayed equation.  
The remaining statements needed to show that \(\langle q_2^3 q_3, q_2 q_3^3 \rangle \trianglelefteq Q\) follow by symmetry since the mapping that swaps $q_2$ and $q_3$ clearly induces an automorphism of the group $Q$. We can use the presentation of \(Q\) to deduce that \(Q / \langle q_2^3 q_3, q_2 q_3^3 \rangle \cong
		\langle q_2, q_3 \mid q_2^4 = 1, q_3^4 = 1, (q_2 q_3)^2 = 1, q_2^3 q_3 = 1,
		q_2 q_3^3 = 1 \rangle\). The final relation is equivalent (using \(q_3^4=
		1\)) to \(q_2 = q_3\). We can thus remove the redundant generator \(q_2\) to conclude that this quotient is isomorphic to \(\langle q_2 \mid q_2^4 =
		1\rangle\), which is finite. It follows that \(\langle q_2^3 q_3, q_2 q_3^3
		\rangle\) has finite index in $Q$, as required, with \(\{1, q_2, q_2^2, q_2^3\}\) being a transversal. We conclude that \(Q\) has the normal form \(\{(q_2^3 q_3)^i (q_2 q_3^3)^j q_2^\delta : i, j \in \Z, \delta \in \{0, 1, 2, 3\}\}\). Using the fact that \(G\) is an extension of \(\langle h \rangle\) by \(Q\), that $h$ is central in $G$, and the fact that \(\langle h \rangle \cong \Z\), by \cref{h-inf-order}, we have that \(G\) admits the normal form
    \[
			\{h^k (q_2^3 q_3)^i (q_2 q_3^3)^j q_2^\delta : i, j, k \in \Z, \delta \in \{0, 1, 2, 3\}\}.
    \]
Observe that in $G$ we have
$(q_2 q_3)^2 = (q_1^{-1} h^b)^2 = q_1^{-2} h^{2b} = h^{1+2b}$.
With a view to applying Lemma~\ref{lem:useful} we compute \(q_2^3 q_3 \cdot q_2 q_3^3 \cdot (q_2^3 q_3)^{-1}\) in $G$ obtaining:              
		\begin{align*}
						q_2^3 q_3 \cdot q_2 q_3^3 \cdot (q_2^3 q_3)^{-1}
						& = q_2^3 q_3 q_2 q_3^2 q_2 q_2^{-4} 
						 = q_2^2 (q_2 q_3)^2 q_3 q_2 h^{\beta_2}  =q_2^2 h^{1+2b} q_3 q_2 h^{\beta_2}  
						 = q_2^2 q_3 q_2 h^{1 + \beta_2 + 2b} \\
						& = q_2 (q_2 q_3)^2 q_3^{-1} h^{1 + \beta_2 + 2b} =
                                                  q_2 h^{1+2b}  q_3^{-1} h^{1 + \beta_2 + 2b}  
						 = q_2 q_3^3 q_3^{-4} h^{2 + \beta_2 + 4b} 
						 = q_2 q_3^3 h^{2 + \beta_2 + \beta_3 + 4b}.
		\end{align*}

It then follows from Lemma~\ref{lem:useful} that 
\begin{align}
[(q_2^3 q_3)^i, (q_2 q_3^3)^j] = h^{cij} \label{Eq_2332}
\end{align}
for all $i,j \in \Z$ where $c = 2 + \beta_2 + \beta_3 + 4b$.

We now separate into two cases: \(c = 0\) and \(c \neq 0\).

\textbf{Case 1:} \(c = 0\). In this case, \([q_2^3 q_3, q_2 q_3^3] = 1\), and so \(A  =  \langle q_2^3 q_3, q_2 q_3^3, h \rangle\) is an abelian subgroup of \(G\) satisfying
\([G : A] = [G / \langle h \rangle : A / \langle h \rangle] = [Q: \langle q_2^3 q_3, q_2 q_3^3 \rangle] = 4\). Thus \(G\) is virtually abelian, as required.

\textbf{Case 2:} \(c \neq 0\). It now suffices to show that \(\langle h \rangle\), \(\langle q_2^3 q_3, h \rangle\) and \(\langle q_2 q_3^3, h \rangle\) are all equationally definable in order to apply \cref{comm-undecidable}. Since \(\langle h \rangle = \langle q_2^3 q_3, h \rangle \cap \langle q_2 q_3^3, h \rangle\), it suffices to show the latter two are equationally definable. But these are symmetric, so it will suffice to show that \(C_G(q_2^3 q_3) = \langle q_2^3 q_3, h \rangle\).

Clearly, \(C_G(q_3^3 q_3) \supseteq \langle q_2^3
		q_3, h \rangle\). For the converse inclusion let \(g  =  h^k (q_2^3 q_3)^i (q_2 q_3^3)^j
		q_2^\delta \in C_G(q_2^3 q_3)\). Then \(g / \langle h \rangle \in
		C_Q(q_1^3 q_2)\). So consider \((q_2^3 q_3)^i (q_2 q_3^3)^j q_2^\delta
		= g / \langle h \rangle \in Q\). Then, using the conjugates of \(\langle q_2^3 q_3, q_2 q_3^3 \rangle\) in \(Q\) we computed earlier, we have

\vspace{-10mm}

		\begin{align*}
			(q_2^3 q_3)^{i + 1} (q_2 q_3^3)^j q_2^\delta
			& =_Q q_2^3 q_3 \cdot (q_2^3 q_3)^i (q_2 q_3^3)^j q_2^\delta 
			 =_Q (q_2^3 q_3)^i (q_2 q_3^3)^j q_2^\delta
\cdot q_2^3 q_3 
			 =_Q \begin{cases}
											(q_2^3 q_3)^{i + 1} (q_2 q_3^3)^j q_2^\delta & \delta = 0 \\
											(q_2^3 q_3)^{i} (q_2 q_3^3)^{j - 1} q_2^{\delta} & \delta = 1 \\
											(q_2^3 q_3)^{i - 1} (q_2 q_3^3)^j q_2^{\delta} & \delta = 2 \\
											(q_2^3 q_3)^{i} (q_2 q_3^3)^{j + 1} q_2^{\delta} & \delta = 3.
						\end{cases}
		\end{align*}
The only case where we have equality is \(\delta = 0\). So \(g = h^k (q_2^3
		q_3)^i (q_2 q_3^3)^j\). Then applying Equation \ref{Eq_2332} we obtain 
		\begin{align*}
						h^k (q_2^3 q_3)^{i + 1} (q_2 q_3^3)^j
						& = q_2^3 q_3 \cdot g 
						 = g \cdot q_2^3 q_3 
						 = h^k (q_2^3 q_3)^i (q_2 q_3^3)^j (q_2^3 q_3) 
						 = h^{k + cj} (q_2^3 q_3)^{i+1} (q_2 q_3^3)^j. 
		\end{align*}
                Thus \(j = 0\), and \(g \in \langle q_2^3 q_3, h \rangle\), as required.
Now setting $x=q_2^3 q_3$ and $y = q_2 q_3^3$ in the statement of
\cref{comm-undecidable} and setting $S_x =\langle q_2^3 q_3, h \rangle $ and
$S_y =\langle q_2 q_3^3, h \rangle  $ it follows that $S_x$ and $S_y$ and
$\langle h \rangle$ are equationally definable subsets of $G$. Together with
Equation \eqref{Eq_2332} and the fact that $h$ is central and has infinite
order by Lemma~\ref{Seifert-pres}, it then follows that all the conditions of
\cref{comm-undecidable} are satisfied. 
Thus Hilbert's tenth problem is reducible to the Diophantine problem in
\(G\), and so the latter is undecidable.
\end{proof}

\begin{lem}[o1 - (0; 2, 3, 6)]
				\label{o1-0-2-3-6}
	Let
	\[
		G = \langle q_1, q_2, q_3, h \mid q_i h = h q_i, q_1^2 h = 1,
		q_2^3 h^{\beta_2} = 1, q_3^6 h^{\beta_3} = 1, q_1 q_2 q_3 = h^b \rangle,
	\]
	where \(\beta_2 \in \{1, 2\}\), \(\beta_3 \in \{1, 2, 3, 4, 5\}\) and \(b \in \Z\). Then the
	following are equivalent:
	\begin{enumerate}
		\item \(G\) has a decidable Diophantine problem;
		\item \(2 \beta_2 + \beta_3 + 6b + 3 = 0\);
		\item \(G\) is virtually abelian.
	\end{enumerate}
Furthermore, in the case that the above conditions are not satisfied then Hilbert's tenth problem over $\Z$  is reducible to the Diophantine problem in $G$.   
\end{lem}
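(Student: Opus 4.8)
The plan is to follow exactly the template of \cref{o1-0-2-2-2-2}, \cref{o1-0-3-3-3} and \cref{o1-0-2-4-4}: reduce the whole statement to a single commutator computation in the quotient $Q = G/\langle h\rangle$ and then feed the result into \cref{comm-undecidable}. First I would note that $\langle h\rangle$ is central, hence normal, and pass to $Q \cong \langle q_1,q_2,q_3 \mid q_1^2 = q_2^3 = q_3^6 = 1,\ q_1q_2q_3 = 1\rangle$. Removing the redundant generator $q_1 = (q_2q_3)^{-1}$ identifies $Q$ with the $(2,3,6)$ von Dyck group $\langle q_2,q_3 \mid q_2^3 = q_3^6 = (q_2q_3)^2 = 1\rangle$, which by \cref{Seifert-Euler-qts} is virtually $\Z^2$. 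As in the earlier lemmas, the essential preliminary step is to exhibit an explicit rank-$2$ free abelian normal subgroup of finite index together with a transversal. Here the point group is $C_6 = \langle q_3\rangle$, so the translation lattice $T$ has index $6$ with transversal $\{1,q_3,q_3^2,q_3^3,q_3^4,q_3^5\}$. Guided by the index-$2$ subgroup $\langle q_2, q_3^2\rangle$ (a $(3,3,3)$-group, already handled in \cref{o1-0-3-3-3}), a natural pair of generators for $T$ is $X = q_2^2 q_3^2$ and $Y = q_2 q_3^4$; both are translations (their rotation parts cancel), and I would verify from the relations that they commute in $Q$, that $\langle X,Y\rangle$ is normal with $Q/\langle X,Y\rangle \cong C_6$, and hence that $\langle X,Y\rangle \cong \Z^2$, being a $2$-generated finite-index subgroup of a virtually $\Z^2$ group. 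This yields a normal form $\{X^i Y^j q_3^\delta : i,j\in\Z,\ \delta\in\{0,\dots,5\}\}$ for $Q$, which lifts, using \cref{h-inf-order} and centrality of $h$, to a normal form $\{h^k X^i Y^j q_3^\delta\}$ for $G$.

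The heart of the argument is one commutator computation in $G$. Using $q_1^2 = h^{-1}$, $q_2^3 = h^{-\beta_2}$, $q_3^6 = h^{-\beta_3}$, $q_1q_2q_3 = h^b$, and the derived identity $(q_2q_3)^2 = q_1^{-2}h^{2b} = h^{2b+1}$, I would reduce $X Y X^{-1}$ to $Y$ times a power of $h$, obtaining $[X,Y] = h^c$ with $c = 2\beta_2 + \beta_3 + 6b + 3$. This $c$ is $6$ times the orbifold Euler number $b + \tfrac12 + \tfrac{\beta_2}{3} + \tfrac{\beta_3}{6}$, which is precisely why condition~(2) reads $c = 0$. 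Since $h$ is central, Lemma~\ref{lem:useful} then upgrades this to $[X^i, Y^j] = h^{cij}$ for all $i,j\in\Z$.

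The two cases split on whether $c = 0$. If $c = 0$ then $X$ and $Y$ commute in $G$, so $\langle X, Y, h\rangle$ is abelian; centrality of $h$ together with normality of $\langle X,Y\rangle$ in $Q$ makes it normal in $G$ with $G/\langle X,Y,h\rangle \cong C_6$, so $G$ is virtually abelian and has decidable Diophantine problem. This establishes the implications (2)$\Rightarrow$(3)$\Rightarrow$(1).

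For the converse, $c\neq 0 \Rightarrow$ undecidable (which also supplies the ``furthermore'' reduction and closes the cycle (1)$\Rightarrow$(2)), I would verify the three hypotheses of \cref{comm-undecidable} with $x = X$, $y = Y$, $S_x = C_G(X)$, $S_y = C_G(Y)$. Condition~(1) is the commutator identity above. For equational definability I would show $Z(G) = \langle h\rangle$ (equivalently $Z(Q) = 1$, since $p6$ is centreless) and $C_G(X) = \langle X, h\rangle$, $C_G(Y) = \langle Y, h\rangle$; then $\langle h\rangle = C_G(X)\cap C_G(Y)$ gives condition~(3), and condition~(2) follows from $\langle X, h\rangle/\langle h\rangle = \langle X\rangle/\langle h\rangle$ (symmetrically for $Y$) as $h$ is central. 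The centraliser computation is where I expect the main obstacle to lie: one first shows $C_Q(X) = T$ — a nontrivial translation commutes in $Q$ only with translations, since every nontrivial element of $C_6$ acts as a fixed-point-free rotation on nonzero vectors — which at the word level means checking, across all six transversal values $\delta\in\{0,\dots,5\}$, that $X^i Y^j q_3^\delta$ centralises $X$ only when $\delta = 0$; one then lifts to $G$ and uses $[X, Y^j] = h^{cj}$ together with $c\neq 0$ to force $j = 0$, giving $C_G(X) = \langle X, h\rangle$. With all hypotheses of \cref{comm-undecidable} in place, Hilbert's tenth problem over $\Z$ reduces to the Diophantine problem in $G$, which is therefore undecidable. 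I expect the $h$-exponent bookkeeping in the commutator and the six-case transversal analysis (one longer than the four-case analysis of \cref{o1-0-2-4-4}) to be the most delicate steps, though entirely routine.
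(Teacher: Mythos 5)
Your proposal follows the paper's proof essentially step for step: pass to $Q = G/\langle h\rangle$, exhibit an explicit index-$6$ free abelian normal subgroup (the translation lattice) with a $C_6$ transversal, compute a single commutator $[X,Y]=h^{c}$ with $c = \pm(2\beta_2+\beta_3+6b+3)$, apply Lemma~\ref{lem:useful}, and split on $c=0$ (virtually abelian) versus $c\neq 0$ (centraliser computations feeding \cref{comm-undecidable}). The only differences are cosmetic — the paper eliminates $q_3$ rather than $q_1$ and uses the lattice basis $x=q_1q_2q_1q_2^2$, $y=q_2^2q_1q_2q_1$ with transversal $\{(q_1q_2)^\delta\}$, whereas you use $X=q_2^2q_3^2$, $Y=q_2q_3^4$ with transversal $\{q_3^\delta\}$; since both are bases of the same index-$6$ lattice this changes $c$ by at most a sign, which affects nothing.
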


\begin{proof}
		Note that \(\langle h \rangle\) is central, and hence normal. As with earlier lemams, before we
		consider \(G\) directly, we consider \(Q = G / \langle h \rangle\),
		since we can use this to construct a normal form for \(G\). Note that
		\(Q\) has the presentation \(\langle q_1, q_2, q_3 \mid q_1^2 = 1,
		q_2^3 = 1, q_3^6 = 1, q_1 q_2 q_3 = 1\rangle\). Removing the redundant
		generator \(q_3\) gives that \(Q \cong \langle q_1, q_2 \mid q_1^2 = 1,
		q_2^3 = 1, (q_1 q_2)^6 = 1\rangle\). 
We claim that
		\(\langle q_1q_2q_1q_2^2, q_2^2 q_1 q_2  q_1 \rangle\) is a
		finite index abelian subgroup of $Q$ which would then imply that 
\(\langle q_1q_2q_1q_2^2, q_2^2 q_1 q_2  q_1 \rangle \cong \Z^2\) since by  
\cref{Seifert-Euler-qts} we know that  \(Q\) is virtually \(\Z^2\).
We begin by showing this subgroup is abelian. We have
		\begin{align*}
			q_1q_2q_1q_2^2 \cdot q_2^2 q_1 q_2  q_1
			& =_Q  (q_1 q_2)^3 q_1 
			 =_Q  (q_2^2 q_1)^3 q_1 
			 =_Q  q_2^2 q_1 q_2^2 q_1 q_2^2 
			 =_Q  q_2^2 q_1 q_2 q_1 \cdot q_1q_2q_1q_2^2 
		\end{align*}

Thus \(x  =  q_1 q_2 q_1 q_2^2\) and \(y  =  q_2^2 q_1 q_2
		q_1\) commute and \(\langle x, y \rangle \leq Q\) is abelian. We next show
		that \(\langle x, y \rangle\) is normal. We have 
		\begin{align*}
			q_1 x q_1^{-1} & =_Q q_2 q_1 q_2^2 q_1 =_Q  (q_1 q_2 q_1 q_2^2)^{-1}
			=_Q  x^{-1}, 
			q_2 x q_2^{-1}  =_Q  q_2 q_1 q_2 q_1 q_2
			=_Q  (q_2^2 q_1 q_2^2 q_1 q_2^2)^{-1} =_Q  (yx)^{-1} \\
			q_1 y q_1^{-1} & =_Q  q_1 q_2^2 q_1 q_2 =_Q  (q_2^2 q_1 q_2 q_1)^{-1} =_Q  y^{-1}, 
			q_2 y q_2^{-1}  =_Q  q_1 q_2 q_1 q_2^2 =_Q  x.
		\end{align*}
	which proves that \(\langle x, y \rangle\) is normal since $q_1$ and $q_2$ have finite order in $Q$.    	
We now note that the quotient of \(Q\) by \(\langle x, y \rangle\)
		is
		\begin{align*}
			& \langle q_2, q_2 \mid q_1^2 = 1, q_2^3 = 1, (q_1 q_2)^6 =
			1, q_1 q_2 q_1 q_2^2 = 1, q_2^2 q_1 q_2 q_1 = 1\rangle \\
			& \cong \langle q_1, q_2 \mid q_1^2 = 1, q_2^3 = 1, (q_1 q_2)^6 = 1, q_1  q_2 q_1 q_2^2 = 1, q_2^2 q_1 q_2 q_1 = 1, q_2^{-1} = (q_1 q_2)^2, q_1 = (q_1q_2)^3\rangle \\
			& \cong \langle q_1, q_2, z \mid q_1^2 = 1, q_2^3 = 1, (q_1 q_2)^6 = 1, q_1  q_2 q_1 q_2^2 = 1, q_2^2 q_1 q_2 q_1 = 1, q_2^{-1} = (q_1 q_2)^2, q_1 = (q_1q_2)^3, z = q_1 q_2 \rangle \\
			& \cong \langle z \mid z^6 = 1 \rangle.
		\end{align*}
In the first step above from $(q_1q_2q_1q_2)q_2=1$ we derive $q_2^{-1} = (q_1q_2)^2$, and then from this it follows that $1 = q_2(q_1 q_2)^2$ implying $q_1 = (q_1q_2)^2$. That implies that the group is cyclic generated by $z = q_1q_2$.

Thus \(\{(q_1 q_2)^\delta : \delta \in \{0, \ldots, 5\}\}\) is a transversal for \(\langle x, y \rangle \in Q\). With \(z = q_1 q_2\), it follows that \(Q\) has the normal form \(x^i y^j z^\delta\), where \(i, j \in \Z\) and \(\delta \in \{0, \ldots, 5\}\). Lifting everything back to \(G\), noting that \(\langle h \rangle \cong \Z\) by \cref{h-inf-order}, it follows that  
\[
\{ x^i y^j z^{\delta} h^k: 
\mbox{\(i, j, k \in \Z\) and \(\delta \in \{0, \ldots, 5\}\}\)} 
\]
is a set of normal forms for $G$, where $x =q_1 q_2 q_1 q_2^2$, $y=q_2^2 q_1 q_2 q_1$ and $z=q_1 q_2$.      
We next compute \(xyx^{-1}\) in \(G\), which we will need to compute \([x^i, y^j]\), for \(i, j \in \Z\). 
Note that in $G$ we have 
\[
(q_1 q_2)^6 = (h^b q_3^{-1})^6 = h^{6b}q_3^{-6} = h^{6b + \beta_3}.
\] 
Now in the group $G$ we have
		\begin{align*}
						xyx^{-1} & = q_1 q_2 q_1 q_2^2 \cdot q_2^2 q_1 q_2 q_1 \cdot q_2^{-2} q_1^{-1}
						q_2^{-1} q_1^{-1}
										  & & = q_1 q_2 q_1 q_2 q_2^3 q_1 q_2 q_1 q_2 q_2^{-3} q_1 q_1^{-2} q_2^2
										 q_2^{-3} q_1 q_1^{-2} \\
										 & = q_1 q_2 q_1 q_2 h^{-\beta_2} q_1 q_2 q_1 q_2 h^{\beta_2} q_1 h q_2^2 h^{\beta_2} q_1 h
							  & & = (q_1 q_2)^5 q_2 q_1 h^{\beta_2 + 2} \\
							 & = (q_1 q_2)^6 q_2^{-1} q_1^{-1} q_2 q_1 h^{\beta_2 + 2} 
							  & & = q_2^{-1} q_1^{-1} q_2 q_1 h^{\beta_2 + 2 + 6b + \beta_3} \\
							 & = (q_2^{2}h^{\beta_2}) (q_1 h) q_2 q_1 h^{\beta_2 + 2 + 6b + \beta_3} 
							  & & = y h^{2\beta_2 + \beta_3 + 6b + 3}.
		\end{align*}
This shows that $[x,y] = h^c$ where $c = 2 \beta_2 + \beta_3 + 6b + 3$. It then follows from   
Lemma~\ref{lem:useful} that in $G$:
		\begin{align}\label{eqn:xycij}
			[x^i, y^j] & = h^{cij}
		\end{align}
for all $i,j \in \Z$.  
We now consider the cases when \(c = 0\) and \(c \neq 0\) separately.

\textbf{Case 1:} \(c = 0\). 
In this case, \([x, y] = h^c = 1\), and so 
\(A  =  \langle x, y, h \rangle\) 
is an abelian subgroup of \(G\) satisfying
\([G:A] = [G / \langle h \rangle : A / \langle h \rangle] = [Q:\langle x, y \rangle] = 6\), and so \(A\) is finite index in \(G\), and \(G\) is virtually abelian and thus has a decidable Diophantine problem, as required.

\textbf{Case 2:} \(c \neq 0\). We aim to invoke \cref{comm-undecidable}, which requires us to prove that \(\langle h \rangle\), \(\langle h, x \rangle\) and \(\langle h, y \rangle\) are all equationally definable. We have \(\langle h \rangle = \langle h, x \rangle \cap \langle h, y \rangle\), so to show \(\langle h \rangle\) is equationally definable, it suffices to show that the latter two sets are. We begin by showing \(C_G(x) = \langle x, h \rangle\). Clearly, \(C_G(x) \supseteq \langle x, h \rangle\), so let \(g  =  x^i y^j z^\delta h^k \in C_G(x)\). Then \(g / \langle h \rangle \in C_Q(x)\), and so using the 
calculations above of conjuation in \(Q\) of \(x\) and \(y\) by \(q_1\) and \(q_2\) we have 
$q_1q_2 x =_Q xy q_1 q_2$ and $q_1 q_2 y =_Q x^{-1} q_2 q_2$, which we can then use to show 
		\begin{align*}
			x^{i + 1} y^j z^\delta 
			 =_Q x^i y^j z^\delta x 
			 =_Q x^i y^j (q_1 q_2)^\delta x 
			 =_Q \begin{cases}
							x^{i + 1} y^j z^\delta & \delta = 0 \\
							x^{i + 1} y^{j + 1} z^\delta & \delta = 1 \\
							x^i y^{j + 1} z^\delta & \delta = 2 \\
							x^{i - 1} y^j z^\delta & \delta = 3 \\
							x^{i - 1} y^{j - 1} z^\delta & \delta = 4\\
							x^i y^{j - 1} z^\delta & \delta = 5.
						\end{cases}
		\end{align*}
		The only case without a contradiction is \(\delta = 0\), and so \(g = x^i y^j h^k\).
		Thus working in \(G\):
		\begin{align*}
			x^{i + 1} y^j h^k & = xg 												
             = gx 
												 = x^i y^j h^k \cdot x 
												 = x^{i + 1} y^j h^{k - cj}.
		\end{align*}
		Thus \(j = 0\) and \(g = x^i h^k \in \langle x, h \rangle\). We can
		conclude that \(C_G(x) = \langle x, h \rangle\). 
Next note that by definition we have $y = q_2 x q_2^{-1}$ in $G$. Thus since $h$ is central in $G$ we have  
\[
C_G(y) = 
C_G(q_2 x q_2^{-1}) =  
q_2^{-1} C_G(x) q_2 =
\langle q_2^{-1} x q_2, q_2^{-1} h q_2 \rangle = 
\langle y, h \rangle.
\] 
Now setting 
$x =q_1 q_2 q_1 q_2^2$ and $y=q_2^2 q_1 q_2 q_1$ 
in the statement of \cref{comm-undecidable} and letting $S_x =\langle x, h \rangle $ and $S_y =\langle y, h \rangle  $ it follows that $S_x$ and $S_y$ and $\langle h \rangle$ are Equationally definable subsets of $G$. Together with equation \eqref{eqn:xycij} and the fact that $h$ is central and has infinite order by Lemma~\ref{Seifert-pres}, it then follows that the conditions of \cref{comm-undecidable} are satisfied. 
Thus Hilbert's tenth problem reduces to the Diophantine problem in \(G\) which is therefore undecidable, completing the proof of the lemma. 
\end{proof}

\begin{lem}[n1-3, (2;)]
				\label{n123-2}
    Let \(b \in \Z\) and \(\epsilon_1, \epsilon_2 \in \{1, -1\}\),
    and let \(G\) be the group with the presentation
    \[   
        \langle v_1, v_2, h \mid v_i h = h^{\epsilon_i} v_i, \; v_1^2 v_2^2 = h^b
        \rangle.
    \]
    Then
    \begin{enumerate}
        \item If \(\epsilon_1 = \epsilon_2 = 1\) or \(\{\epsilon_1, \epsilon_2\}
                = \{-1, 1\}\), then \(G\) is virtually abelian;
        \item If \(\epsilon_1 = \epsilon_2 = -1\), then the following
                are equivalent:
                \begin{enumerate}
                    \item \(b = 0\);
                    \item \(G\) is virtually abelian;
                    \item \(G\) has a decidable Diophantine problem.
                \end{enumerate}
Furthermore, in the case that the above conditions are not satisfied then Hilbert's tenth problem over $\Z$  is reducible to the Diophantine problem in $G$.   
    \end{enumerate}  
\end{lem}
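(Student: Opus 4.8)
The plan is to work with the normal infinite cyclic subgroup \(\langle h\rangle\) and the quotient \(Q = G/\langle h\rangle \cong \langle v_1,v_2 \mid v_1^2 v_2^2 = 1\rangle\), which is the Klein bottle group. First I would record that \(\langle h\rangle \trianglelefteq G\) (each \(v_i\) conjugates \(h\) to \(h^{\pm 1}\)) and that \(\langle h\rangle\) is infinite cyclic by \cref{h-inf-order}, and I would put \(Q\) into the standard form \(\langle a,t \mid a t a^{-1} = t^{-1}\rangle\) with \(a = v_1\), \(t = v_1 v_2\), so that \(\langle t, a^2\rangle = \langle v_1 v_2, v_1^2\rangle \cong \Z^2\) has index \(2\) in \(Q\) (compatibly with \cref{Seifert-Euler-qts}), giving the normal form \(\{(v_1v_2)^i (v_1^2)^j v_1^\delta h^k\}\) for \(G\). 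The one computation that drives everything is, using \(v_1^2 = h^b v_2^{-2}\) together with \(v_i h = h^{\epsilon_i}v_i\), the identity \(v_1^2 (v_1 v_2) v_1^{-2} = h^{\epsilon_1 b(1-\epsilon_2)}(v_1 v_2)\), that is \([v_1^2, v_1 v_2] = h^{\epsilon_1 b(1-\epsilon_2)}\), together with its squared form \([v_1^2, (v_1 v_2)^2] = h^{\epsilon_1 b(1-\epsilon_2)(1+\epsilon_1\epsilon_2)}\) (obtained by noting \((v_1v_2)h^m(v_1v_2)^{-1} = h^{\epsilon_1\epsilon_2 m}\)).

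For part (1) I would exhibit a finite-index abelian subgroup directly. Both \(v_1^2\) and \((v_1 v_2)^2\) centralise \(h\) (the image of \((v_1v_2)^2\) under \(G \to \Aut\langle h\rangle \cong C_2\) is trivial), and their images generate \(\langle a^2, t^2\rangle\) of index \(4\) in \(Q\), so \(A = \langle v_1^2, (v_1 v_2)^2, h\rangle\) is a central extension of \(\Z^2\) by \(\langle h\rangle\) and is abelian precisely when \([v_1^2, (v_1 v_2)^2] = 1\). By the formula above this commutator vanishes whenever \(\epsilon_1\epsilon_2 = -1\) (the factor \(1 + \epsilon_1\epsilon_2\) is zero) or \(\epsilon_1 = \epsilon_2 = 1\) (the factor \(1 - \epsilon_2\) is zero), which is exactly the hypothesis of part (1). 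Hence \(A \cong \Z^3\) is a finite-index abelian subgroup and \(G\) is virtually abelian; when \(\epsilon_1 = \epsilon_2 = 1\) one may instead take the index-\(2\) subgroup \(\langle v_1^2, v_1 v_2, h\rangle\).

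For part (2), where \(\epsilon_1 = \epsilon_2 = -1\), the same formula gives \([v_1^2, v_1 v_2] = h^{-2b}\), and here \(v_1 v_2\) centralises \(h\), so \(C_G(h) = \langle v_1^2, v_1 v_2, h\rangle\); this is abelian (\(\cong \Z^3\), index \(2\)) exactly when \(b = 0\), giving (a)\(\Rightarrow\)(b), while (b)\(\Rightarrow\)(c) is the decidability of the Diophantine problem in virtually abelian groups (\cite{VAEP}, cf.\ \cref{rmk:virtuallyAbelianEDT0L}). For (c)\(\Rightarrow\)(a) I would prove the contrapositive: if \(b \neq 0\) then \(C_G(h)\) is a discrete Heisenberg group and I invoke \cref{comm-undecidable}. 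Taking \(x = v_1 v_2\) and \(y = v_1^{-1} v_2\) --- both centralising \(h\), with \(\Z^2\)-independent images \(t\) and \(a^{-2}t\) --- gives \([x,y] = h^{-2b}\), which \cref{lem:useful} promotes to \([x^i,y^j] = h^{-2b\,ij}\) with \(c = -2b \neq 0\). It then remains to compute \(C_G(x) = \langle x, h\rangle\) and \(C_G(y) = \langle y, h\rangle\), from which \(S_x = C_G(x)\), \(S_y = C_G(y)\) satisfy condition (2), \(\langle h\rangle = C_G(x)\cap C_G(y)\) is equationally definable (condition (3)), and \cref{comm-undecidable} applies, reducing Hilbert's tenth problem to the Diophantine problem in \(G\) and thereby also settling the final ``furthermore'' clause.

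The hard part is verifying condition (2) of \cref{comm-undecidable}, and in particular resisting the temptation to use \(x = v_1^2\): because \(v_1^2\) has the square root \(v_1\) in \(G\), its centraliser is \(C_G(v_1^2) = \langle v_1, h\rangle\), which meets the odd cosets \(v_1^{2j+1}\langle h\rangle\) and so is strictly larger than any set whose quotient by \(\langle h\rangle\) is \(\langle v_1^2\rangle\). The remedy is to choose generators of the \(\Z^2 \le Q\) that are not proper powers in \(G\) (the images \((0,1)\) and \((-1,1)\) of \(v_1 v_2\) and \(v_1^{-1}v_2\) are primitive and are not squares in the Klein bottle group), so that their \(Q\)-centralisers are exactly \(\Z^2\); the remaining work is then to push this through \(G\) using the normal form, ruling out both elements outside \(C_G(h)\) and odd-power contributions, to collapse each centraliser to \(\langle x, h\rangle\) and \(\langle y, h\rangle\). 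This centraliser computation is the crux of the argument.
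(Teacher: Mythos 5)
Your proposal is correct, and its skeleton matches the paper's: pass to the Klein bottle quotient $Q$, identify the index-$2$ copy of $\Z^2$ as $\langle v_1^2, v_1v_2\rangle$, lift to normal forms in $G$, compute the key commutator, and then either exhibit a finite-index abelian subgroup or feed \cref{comm-undecidable}. Two genuine differences are worth noting. First, your unified formula $[v_1^2, v_1v_2] = h^{\epsilon_1 b(1-\epsilon_2)}$ and its squared form handle all four sign patterns at once, whereas the paper computes each case separately; your single index-$4$ subgroup $\langle v_1^2,(v_1v_2)^2,h\rangle$ covers both subcases of part (1) (the paper uses the index-$2$ subgroup $\langle v_1^2, v_1v_2, h\rangle$ when $\epsilon_1=\epsilon_2=1$). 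Second, and more substantively, in the undecidable case the paper does exactly what you warn against: it takes $x=v_1^2$, $y=v_1v_2$, finds $C_G(v_1^2)=\langle v_1,h\rangle$ (which is too big for condition (2) of \cref{comm-undecidable} for precisely the reason you identify), and repairs this by taking $S_x$ to be the set of squares of elements of $C_G(v_1^2)$, which is $\langle v_1^2,h^2\rangle$ and is still equationally definable. Your choice $x=v_1v_2$, $y=v_1^{-1}v_2$ avoids that repair entirely: both elements centralise $h$, $[x,y]=h^{-2b}$ checks out, and since $C_Q(t)=C_Q(a^{-2}t)=\langle a^2,t\rangle$ while the nontrivial commutator $h^{-2b}$ forces the centralisers inside $C_G(h)$ down to $\langle x,h\rangle$ and $\langle y,h\rangle$, conditions (2) and (3) hold on the nose. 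Both routes are valid; yours trades the "set of squares" device for a second centraliser computation of the same flavour as the first, and is arguably cleaner. The only loose end in your write-up is that the two centraliser computations are sketched rather than carried out, but the sketch (project to $Q$, then use the commutator relation in the central extension to kill the $v_1^2$-component, resp.\ force $i+j=0$) is exactly what succeeds.
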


\begin{proof}
	We first consider \(Q = G / \langle h \rangle\). We have that \(Q = \langle
	v_1, v_2 \mid v_1^2 v_2^2 = 1 \rangle\). This is the Klein bottle group,
	which is virtually \(\Z^2\), with 
\(\Z^2 \cong \langle v_1^2, v_1 v_2\rangle =\langle v_1^2, v_1 v_2, v_2v_1, v_2^{2} \rangle \leq Q  \) 
being a
	subgroup of index \(2\), with transversal \(\{1, v_1\}\). Thus \(Q\) admits
	the normal form \(\{v_1^{2i} (v_1 v_2)^j v_1^\delta : i, j \in \Z, \delta
					\in \{0, 1\}\}\), and so, since $h$ has infinite order by \cref{h-inf-order},  \(G\) admits a set of normal forms  \(\{h^k v_1^{2i}
					(v_1 v_2)^j v_1^\delta : i, j, k \in \Z, \delta \in \{0, 1\}\}\).
					We now separate into the three cases from the statement of the lemma.

	\textbf{Case 1:} \(\epsilon_1 = \epsilon_2 = 1\). In this case, we have $v_2^2v_1^2=h^b$ in $G$ and then in $G$ we have: 
	\begin{align*}
					v_1^2 \cdot v_1 v_2 
					& = v_1 v_1^2 v_2^2 v_2^{-1} 
					 = v_1 h^b v_2^{-1}
					 = v_1 h^b (v_2 v_1^2  h^{-b}) 
					 = v_1 v_2 \cdot v_1^2.
	\end{align*}
	Since \(h\) is central, we have thus shown that \(\langle v_1^2, v_1v_2,
	h \rangle \leq G\) is abelian. 
The index in $G$ of this subgroup is   
\([G : \langle v_1^2, v_1 v_2, h \rangle]
	= [Q: \langle v_1^2, v_1 v_2 \rangle] = 2\), and so \(G\) is virtually
	abelian in this case.

\textbf{Case 2:} \(\{\epsilon_1, \epsilon_2\} = \{-1, 1\}\). We shall consider the case \(\epsilon_1 = 1\) and \(\epsilon_2 = -1\). The proof for the case \(\epsilon_2 = 1\) and \(\epsilon_1 = -1\) is the same but with the roles of $v_1$ and $v_2$ interchanged.     
Unlike Case 1, we will show a different (smaller) subgroup of $G$ is abelian and has finite index, namely the subgroup \(\langle v_1^2, (v_1 v_2)^2, h \rangle\). Firstly, note that \(h\) is indeed central in \(\langle v_1^2, (v_1 v_2)^2, h \rangle\), and so it suffices to check that in $G$ the remaining two generators commute:
	\begin{align*}
					v_1^2 \cdot (v_1 v_2)^2 v_1^{-2} 
					& = v_1 v_1^2 v_2^2 v_2^{-1} v_1^{-1} v_1^2 v_2^2 v_2 v_2^{-2} v_1^{-2} \\
					& = v_1 h^b v_2^{-1} v_1^{-1} h^b v_2 h^{-b } 
					  = h^{-b} v_1 v_2^{-1} v_1^{-1} v_2 \\
					& = h^{-b} v_1 v_2 v_2^{-2} v_1^{-2} v_1 v_2 
					 = h^{-b} v_1 v_2 h^{-b} v_1 v_2 
					= (v_1 v_2)^2.
	\end{align*}
Hence \(G\) is virtually abelian since 
\(\langle v_1^2, (v_1 v_2)^2, h \rangle\) is abelian with index: 
\[
[G : \langle v_1^2, (v_1 v_2)^2, h \rangle] =  
[Q : \langle v_1^2, (v_1 v_2)^2 \rangle] =
[Q : \langle v_1^2, v_1 v_2 \rangle] [\langle v_1^2, v_1 v_2 \rangle: \langle v_1^2, (v_1 v_2)^2 \rangle] \leq 2 \cdot 2 = 4.  
\]

\textbf{Case 3:} \(\epsilon_1 = \epsilon_2 = -1\). 
In this case, the presentation for \(G\) is 
\[\langle v_1, v_2, h \mid v_i h = h^{-1} v_i, v_1^2 v_2^2 = h^b\rangle.\]  
In particular, \(h\) commutes with \(v_1^2\) and \(v_1 v_2\) and
\begin{align*} 
v_1^2 \cdot v_1 v_2 \cdot v_1^{-2} & = v_1 v_1^2 v_2^2 v_2 v_2^{-2} v_1^{-2}  = v_1 h^b v_2 h^{-b}  = h^{-2b} v_1 v_2.  
\end{align*} 

We now consider \(b = 0\) and
	\(b \neq 0\) separately.

\textbf{Subcase 3.1:} \(b = 0\). In this case, 
\(\langle v_1^2, v_1 v_2, h \rangle\) is an abelian subgroup of \(G\) with index
\([G : \langle v_1^2, v_1 v_2, h \rangle]
	= [Q: \langle v_1^2, v_1 v_2 \rangle] = 2\), and so \(G\) is virtually
	abelian and hence has a decidable Diophantine problem.

\textbf{Subcase 3.2:} \(b \neq 0\). 
In this case we shall appeal to \cref{comm-undecidable} to show the Diophantine problem is undecidable. Since $[v_1^2, v_1v_2] = h^{-2b}$ and $h$ commutes with both $v_1^2$ and $v_1v_2$ in $G$, it then follows from Lemma~\ref{lem:useful} that       
\begin{align}\label{eqn:h2bij}
[v_1^{2i}, (v_1 v_2)^j] = h^{-2bij}
\end{align}
for all \(i, j \in \Z\).

	We next show that the sets \(\langle h \rangle\), 
$\langle v_1,h \rangle$, 
\(\langle v_1^2, h^2
	\rangle\) 
 and \(\langle v_1 v_2, h \rangle\) are all equationally definable,
	which will then allow us to apply \cref{comm-undecidable}. 
Since the defining relations preserve the parity of the sum of the exponent sum of $v_1$ and the exponent sum of $v_2$ we have   
\[
\langle v_1,h \rangle \cap \langle v_1v_2 ,h \rangle =
\langle v_1v_1, h \rangle \cap \langle v_1v_2 ,h \rangle = \langle h \rangle
\]
so once the  sets $\langle v_1,h \rangle$ and $\langle v_1v_2 ,h \rangle$ have been shown to be equationally definable it will follow that $\langle h \rangle $ also is.

We begin by showing \(C_G(v_1^2) =
	\langle v_1, h \rangle\). It is immediate that \(C_G(v_1^2) \supseteq
	\langle v_1, h \rangle\). Let \(h^k v_1^{2i} (v_1 v_2)^j v_1^\delta
	\in C_G(v_1^2)\). Then using the fact that $h$ commutes with any word over $\{ v_1, v_2\}$ of even length we obtain:        
	\begin{align*}
		h^k v_1^{2(i + 1)} (v_1 v_2)^j v_1^\delta 
		& = v_1^2 \cdot h^k v_1^{2i} (v_1 v_2)^j v_1^\delta 
		& & = h^k v_1^{2i} (v_1 v_2)^j v_1^\delta \cdot v_1^2 \\
		 & = h^k v_1^{2i} v_1^2 (v_1^{-2} v_1 v_2 v_1^2)^j v_1^\delta 
		& & = h^k v_1^{2(i + 1)}  (v_1 v_1^{-2} v_2 v_2^{-2} h^b)^j v_1^\delta \\
		 & = h^k v_1^{2(i + 1)} (v_1 v_1^{-2} v_2^{-2} v_2 h^b)^j v_1^\delta 
		& & = h^k v_1^{2(i + 1)} (v_1 h^{-b} v_2 h^b)^j v_1^\delta \\
		 & = h^k v_1^{2(i + 1)} (h^{2b} v_1 v_2)^j v_1^{\delta} 
		& & = h^{k + 2bj} v_1^{2(i + 1)} (v_1 v_2)^j v_1^{\delta}. 
	\end{align*}
	Thus \(j = 0\) and so \(C_G(v_1^2) = \langle v_1, h \rangle\). Let \(h^k
	v_1^{2i + \delta} \in \langle v_1, h \rangle\).  Then \((h^k v_1^{2i +
	\delta})^2 = h^{k + (-1)^\delta k} v_1^{4i + 2\delta}\).  It follows that the
	set of sqaures in \(\langle v_1, h \rangle\) is \(\langle h^2, v_1^2
	\rangle\), and so this set is equationally definable.

	We next show that \(C_G(v_1 v_2) = \langle v_1 v_2, h \rangle\). Again,
	\(C_G(v_1 v_2) \supset \langle v_1 v_2, h \rangle\) is immediate, so
	it remains to show the reverse containment. Let
	\(h^k v_1^{2i} (v_1 v_2)^j v_1^\delta \in C_G(v_1 v_2)\). Then
	the image of \(h^k v_1^{2i} (v_1 v_2)^j v_1^{\delta}\) must project
	onto an element that centralises the image of \(v_1 v_2\) in \(Q\),
	which implies that \(\delta = 0\). Furthermore,
	\begin{align*}
					h^k v_1^{2i} (v_1 v_2)^{j + 1} 
					& = v_1 v_2 \cdot h^k v_1^{2i} (v_1 v_2)^j 
					& & = h^k (v_1 v_2 v_1^2 v_2^{-1} v_1^{-1})^i (v_1 v_2)^{j + 1} \\
					& = h^k (v_1 v_2 h^b v_2^{-3} v_1^{-1})^i (v_1 v_2)^{j + 1} 
					& & = h^k (h^b v_1 v_2^{-2} v_1^{-1})^i (v_1 v_2)^{j + 1} \\
					& = h^k (h^b v_1 h^{-b} v_1^2 v_1^{-1})^i (v_1 v_2)^{j + 1} 
					& & = h^k (h^{2b} v_1^2)^i (v_1 v_2)^{j + 1} \\
					& = h^{k + 2bi} v_1^{2i} (v_1 v_2)^{j + 1}.
	\end{align*}
	Thus \(i = 0\), and so \(C_G(v_1 v_2) = \langle v_1 v_2, h \rangle\),
	and we have shown this set is equationally definable.

Now set $x=v_1^2$ and $y=v_1v_2$ and let 
$S_x = \langle v_1^2,h^2 \rangle$ which is the set of all squares of elements of $\langle v_1, h \rangle$, and let 
$S_y = \langle v_1v_2, h \rangle$.           
We proved above that $[x^i,y^j] = h^{{-2b}ij}$ and that $\langle h \rangle $, $S_x$ and $S_y$ are all equationally definable.     
Also 
$S_x / \langle h \rangle 
= \langle v_1^2,h^2 \rangle / \langle h \rangle 
= \langle v_1^2 \rangle / \langle h \rangle$
and 
$S_y / \langle h \rangle 
= \langle v_1v_2,h \rangle / \langle h \rangle 
= \langle v_1v_2 \rangle / \langle h \rangle$.
Hence all the hypotheses of \cref{comm-undecidable} hold, and so Hilbert's tenth problem is
reducible to the Diophantine problem in \(G\), and so the latter is undecidable.
\end{proof}

\begin{lem}[n1-2, (1; 2, 2)]
		\label{n12-1-2-2}
	Let \(b \in \Z\) and \(\epsilon \in \{1, -1\}\). Let \(G\) be the
	group with the presentation
	\[
		\langle v_1, q_1, q_2, h \mid v_1 h = h^\epsilon v_1, q_i h = hq_i,
		q_j^2 h = 1, q_1 q_2 v_1^2 = h^b \rangle.
	\]
	Then
    \begin{enumerate}
        \item If \(\epsilon = 1\) then \(G\) is virtually abelian
				and hence has a decidable Diophantine problem;
        \item If \(\epsilon = -1\), then the following
                are equivalent:
                \begin{enumerate}
                    \item \(b = -1\);
                    \item \(G\) is virtually abelian;
                    \item \(G\) has a decidable Diophantine problem.
                \end{enumerate}
Furthermore, in the case that the above conditions are not satisfied then Hilbert's tenth problem over $\Z$  is reducible to the Diophantine problem in $G$.   
    \end{enumerate}  
\end{lem}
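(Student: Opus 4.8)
The plan is to follow the same template used in the preceding lemmas: first analyse the quotient $Q = G/\langle h\rangle$, produce an explicit free-abelian subgroup of finite index together with a normal form, lift this normal form to $G$ using \cref{h-inf-order} (which guarantees $\langle h\rangle\cong\Z$), and then reduce everything either to \cref{comm-undecidable} or to a direct virtual-abelianness argument. Here $Q = \langle v_1, q_1, q_2 \mid q_1^2 = q_2^2 = 1,\ q_1 q_2 v_1^2 = 1\rangle$; eliminating $q_2 = q_1 v_1^{-2}$ gives $Q \cong \langle v_1, q_1 \mid q_1^2 = 1,\ q_1 v_1^2 q_1 = v_1^{-2}\rangle$, a Euclidean crystallographic group which \cref{Seifert-Euler-qts} already tells us is virtually $\Z^2$. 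I would exhibit the explicit finite-index free-abelian subgroup generated by $x = v_1^2$ and $y = v_1 q_1 v_1 q_1$ (both of even total $v_1$-exponent), check that it is abelian and normal in $Q$ with a small transversal, and lift to normal forms $\{h^k x^i y^j t\}$ for $G$.

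\textbf{Part (1) ($\epsilon = 1$).} Here $h$ is central, so every $h^b$ that appears during a commutator computation may be moved freely and cancelled. I would compute $[x,y]$ directly in $G$ and show it equals $1$ \emph{for every} $b$; this is the analogue of Case~1 of \cref{n123-2}, where the sign $\epsilon = 1$ makes the $h^b$-contributions cancel. It then follows that $\langle x, y, h\rangle$ is abelian of finite index, so $G$ is virtually abelian and hence has decidable Diophantine problem.

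\textbf{Part (2) ($\epsilon = -1$).} Now $v_1$ inverts $h$ while $q_1, q_2$ centralise it, so an element centralises $h$ precisely when its total $v_1$-exponent is even; both $x$ and $y$ satisfy this. The key computation is $[x, y] = h^{c}$, where $c$ is a fixed nonzero integer multiple of $(b+1)$, the factor $(b+1)$ being forced by the two order-$2$ cone points (each contributing $\beta_j/\alpha_j = 1/2$) together with the base exponent $b$. By \cref{lem:useful} this upgrades to $[x^i, y^j] = h^{cij}$ for all $i,j$, and $c = 0$ if and only if $b = -1$. If $b = -1$ then $x, y, h$ pairwise commute, $\langle x, y, h\rangle$ is abelian of finite index, and $G$ is virtually abelian, giving (a)$\Rightarrow$(b)$\Rightarrow$(c). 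If $b \neq -1$ then $c \neq 0$ and I would invoke \cref{comm-undecidable}: I would verify $C_G(v_1^2) = \langle v_1, h\rangle$ and $C_G(y) = \langle y, h\rangle$ by the usual comparison of normal-form exponents, take $S_y = \langle y, h\rangle$ and $S_x = \langle v_1^2, h^2\rangle$ (the set of squares of $\langle v_1, h\rangle$, which is equationally definable and satisfies $S_x/\langle h\rangle = \langle x\rangle/\langle h\rangle$), and obtain $\langle h\rangle$ as the equationally definable intersection $C_G(v_1^2)\cap C_G(y) = \langle v_1, h\rangle \cap \langle y, h\rangle = \langle h\rangle$, which follows by projecting to $Q$ where $\langle v_1\rangle$ and $\langle \bar y\rangle$ meet trivially. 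With conditions (1)--(3) of \cref{comm-undecidable} verified, Hilbert's tenth problem reduces to the Diophantine problem in $G$, so the problem is undecidable; this is exactly the contrapositive (c)$\Rightarrow$(a) and establishes the final ``furthermore'' clause.

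The main obstacle is the explicit commutator computation $[x,y] = h^{c}$ in the $\epsilon = -1$ case: one must carry the $h$-inversion introduced by each $v_1$ through the rewriting using $q_1 q_2 v_1^2 = h^b$ and $q_j^2 = h^{-1}$, and check that the exponents combine to a nonzero multiple of $b+1$ rather than cancelling. The subsidiary difficulty is pinning down the centralisers exactly and, because $v_1$ inverts $h$, arranging $S_x$ so that its $\langle h\rangle$-cosets coincide with those of $\langle x\rangle$ (the square trick) while keeping $\langle h\rangle$ itself equationally definable.
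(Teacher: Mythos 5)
Your overall strategy coincides with the paper's: you pass to $Q=G/\langle h\rangle$, identify the finite-index abelian subgroup generated by $x=v_1^2$ and $y=(v_1q_1)^2$, lift normal forms to $G$ via \cref{h-inf-order}, compute $[x,y]=h^{c}$ with $c$ a nonzero multiple of $b+1$ exactly when $\epsilon=-1$ and $b\neq -1$ (the paper obtains $c=-4(b+1)$), and then feed the data into \cref{comm-undecidable}. Your treatment of the case $\epsilon=1$ and of the case $b=-1$ (where $\langle x,y,h\rangle$ is abelian of finite index) is the paper's argument.

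There is, however, one step that fails as written: the claim $C_G(y)=\langle y,h\rangle$ for $y=(v_1q_1)^2$. The element $v_1q_1$ commutes with its own square but does not lie in $\langle (v_1q_1)^2,h\rangle$: its image in $Q$ is not a power of $(v_1q_1)^2$, as one sees from the Klein four quotient $Q/\langle v_1^2,(v_1q_1)^2\rangle$ together with the fact that $(v_1q_1)^2$ has infinite order in $Q$. The correct computation gives $C_G((v_1q_1)^2)=\langle v_1q_1,h\rangle$, which strictly contains $\langle y,h\rangle$. Consequently your choice $S_y=\langle y,h\rangle$ is never exhibited as an equationally definable set, and condition (2) of \cref{comm-undecidable} is not actually verified for it. The repair is precisely the square trick you already apply to $S_x$: take $S_y$ to be the set of squares of elements of the centraliser $\langle v_1q_1,h\rangle$, which (since $v_1q_1$ inverts $h$ when $\epsilon=-1$) equals $\langle (v_1q_1)^2,h^2\rangle$ and satisfies $S_y/\langle h\rangle=\langle (v_1q_1)^2\rangle/\langle h\rangle$; this is what the paper does. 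Your identification of $\langle h\rangle$ as the intersection of the two centralisers survives unchanged, since $\langle v_1,h\rangle\cap\langle v_1q_1,h\rangle=\langle h\rangle$. With that one correction the argument goes through and is essentially identical to the paper's proof.
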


\begin{proof}
	Note that the generator \(q_2 = q_1^{-1} h^b v_1^{-2}\) 
is redundant, so \(G\) admits
	the presentation
	\[
		\langle v_1, q_1, h \mid v_1 h = h^{\epsilon} v_1,
		q_1 h = h q_1, (v_1^2 h^{-b} q_1) h =
		h (v_1^2 h^{-b} q_1), q_1^2 h = 1, (v_1^2 h^{-b} q_1)^2 h^{-1} = 1
		\rangle.
	\]
	This can be further rewritten as
	\[
		\langle v_1, q_1, h \mid v_1 h = h^{\epsilon} v_1,
		q_1 h = h q_1, q_1^2 h = 1, (v_1^2 q_1)^2 = h^{2b + 1} 
		\rangle.
	\]

	Let \(Q = G / \langle h \rangle\). Then \(Q\) has the presentation
	\(\langle v_1, q_1 \mid q_1^2 = 1, (v_1^2 q_1)^2 = 1\rangle\).  We have
	from \cref{Seifert-Euler-qts} that \(Q\) is virtually
	\(\Z^2\), however we require the explicit generating set, so we show that
	\(\langle v_1^2, (v_1 q_1)^2 \rangle \leq Q\) is abelian of finite index.
	We have
	\begin{align*}
			v_1^2 (v_1 q_1)^2 & =_Q v_1 \cdot (v_1^2 q_1)^2 \cdot q_1 v_1^{-1} q_1 
							  =_Q v_1 q_1 v_1^{-1} q_1 =_Q v_1 q_1 v_1^{-1} (v_1^2 q_1)^2 q_1 =_Q
(v_1 q_1)^2 v_1^2.
	\end{align*}
Thus \(\langle v_1^2, (v_1 q_1)^2 \rangle \leq Q\) is abelian. This is a normal subgroup of $Q$ since $q_1$ has finite order in $Q$ and   
	\begin{align*}
		v_1 \cdot (v_1 q_1)^2 \cdot v_1^{-1}
		& =_Q v_1^2 q_1 v_1^2 q_1 q_1 v_1^{-1} q_1 v_1^{-1} =_Q
		(q_1 v_1^{-1})^2 =_Q (v_1 q_1)^{-2} \in \langle v_1^2, (v_1 q_1)^2
		\rangle \\
		q_1 \cdot v_1^2 \cdot q_1^{-1} 
		& =_Q v_1^{-2} (v_1^2 q_1)^2
		=_Q v_1^{-2} \in \langle v_1^2, (v_1 q_1)^2 \rangle \\
		q_1 \cdot (v_1 q_1)^2 \cdot q_1^{-1}
		& =_Q q_1 v_1^{-1} \cdot v_1^2 q_1 v_1^2 q_1 \cdot
		q_1 v_1^{-1} =_Q (q_1 v_1^{-1})^2 \in \langle v_1^2,
		(v_1 q_1)^2 \rangle.
	\end{align*}
Note that from the first line in this displayed equation it also follows that  
$v_1^{-1} \cdot (v_1 q_1)^2 \cdot v_1 = (v_1 q_1)^{-2}$. 
We have thus shown that \(\langle v_1^2, (v_1 q_1)^2 \rangle\) is normal in
	\(Q\), and hence \(Q / \langle v_1^2, (v_1 q_1)^2 \rangle\) admits the
	presentation \(\langle v_1, q_1 \mid q_1^2 = v_1^2 = (v_1^2 q_1)^2 = (v_1
	q_1)^2 = 1 \rangle\). This is isomorphic to \(\langle v_1, q_1 \mid q_1^2 =
	v_1^2 = (v_1 q_1)^2 = 1 \rangle\) which is a presentation for the Klein
	four group. In particular, \(\langle v_1^2, (v_1 q_1)^2 \rangle\) has
	finite index in \(Q\), with \(\{1, v_1, q_1, v_1 q_1\}\) being a
	transversal. Thus \(Q\) admits the normal form \(\{v_1^{2i} (v_1 q_1)^{2j}
	v_1^{\delta_1} q_1^{\delta_2} : i, j \in \Z, \delta_1, \delta_2 \in \{0,
1\}\}\). 
Since $\langle h \rangle \cong \Z$ by \cref{h-inf-order}, it follows that 
	\[
			\{h^k v_1^{2i} (v_1 q_1)^{2j} v_1^{\delta_1} q_1^{\delta_2} : 
		i, j, k \in \Z, \delta_1, \delta_2 \in \{0, 1\}\}.
	\]	
is a set of normal forms for the group \(G\). 
We next need to compute the conjugate of \((v_1 q_1)^2\) by
	\(v_1^2\) in \(G\). In $G$ we have
	\begin{align*}
		v_1 \cdot (v_1 q_1)^2 \cdot v_1^{-1}
		& = (v_1^2 q_1)^2 q_1^{-1} v_1^{-1} q_1 v_1^{-1} 
		 = h^{2b+1} h (q_1 v_1^{-1})^2 
		 = h^{2b+1} h (v_1 q_1 h)^{-2} 
		 = h^{2b+1 - \epsilon}   (v_1 q_1)^{-2}. 
\end{align*}
This then implies 
\begin{align}
		\nonumber
v_1^2 \cdot (v_1 q_1)^2 \cdot v_1^{-2}
&=
v_1 (h^{2b + 1 - \epsilon} (v_1 q_1)^{-2}) v_1^{-1} \\
\nonumber
& = h^{\epsilon(2b +  1 - \epsilon)} (v_1 (v_1 q_1)^2 v_1^{-1})^{-1} \\
\nonumber
& = h^{\epsilon(2b + 1 - \epsilon)} (h^{2b + 1 - \epsilon} (v_1 q_1)^{-2})^{-1}\\
\label{eq:1-2-2-conj}
& = h^{(\epsilon - 1)(2b + 1 - \epsilon)} (v_1 q_1)^2.
\end{align}

	\textbf{Case 1:} 
\(\epsilon = 1\) or \(b = - 1\). In this case, either \(\epsilon - 1 = 0\) or
\(\epsilon = -1\), and hence \(2b + 1 - \epsilon = 0\). Thus  
Equation~\eqref{eq:1-2-2-conj} shows that \(v_1^2\) and \((v_1 q_1)^2\) commute. Since these both
commute with \(h\), we have that \(\langle v_1^2, (v_1 q_1)^2, h \rangle\) is
an abelian subgroup of \(G\) with index 
\([G: \langle
	v_1^2, (v_1 q_1)^2, h \rangle] = [Q: \langle v_1^2, (v_1 q_1)^2 \rangle] = 4\). Thus  \(G\) is virtually abelian, and hence has a decidable Diophantine
	problem.

\textbf{Case 2:} \(\epsilon = -1\) and \(b \neq -1\).  
In this case we shall apply \cref{comm-undecidable} to show that \(G\) has an undecidable Diophantine problem. From Equation~\eqref{eq:1-2-2-conj}, it follows that in $G$ we have
\[
[v_1^{2}, (v_1 q_1)^{2}] = h^{(-2)(2b + 2)} = h^{{-4}(b+1)}. 
\]
Since $h$ commutes with $v_1^2$ and $h$ commutes with $(v_1 q_1)^{2}$ it then follows from Lemma~\ref{lem:useful} that  
\begin{align}\label{eqn:vqhij} 
[v_1^{2i}, (v_1 q_1)^{2j}] &= h^{-4ij(b + 1)}
\end{align}
for all \(i, j \in \Z\).

Next we shall prove that 
\(C_G(v_1^2) = \langle v_1, h \rangle\). Clearly,
	\(\langle v_1, h \rangle \subseteq C_G(v_1^2)\). 
	Before we attempt the other containment, we must compute \(q_1 v_1^2 q_1^{-1}\)
	in \(G\). We have
	\[
		q_1 v_1^2 q_1^{-1}
		= q_1 v_1^2 q_1h
		= v_1^{-2} (v_1^2 q_1)^2 h 
		= v_1^{-2} h^{2b + 2}.
	\]
	Returning to \(C_G(v_1^2)\), let \(h^k v_1^{2i} (v_1 q_1)^{2j}
	v_1^{\delta_1} q_1^{\delta_2} \in C_G(v_1^2)\), where \(i, j, k \in \Z\)
	and \(\delta_1, \delta_2 \in \{0, 1\}\). Then
	\begin{align*}
			h^{k} v_1^{2(i + 1)}  (v_1 q_1)^{2j} v_1^{\delta_1} q_1^{\delta_2}
		& = v_1^2 \cdot h^k v_1^{2i} (v_1 q_1)^{2j} v_1^{\delta_1} q_1^{\delta_2} 
		 = h^k v_1^{2i} (v_1 q_1)^{2j} v_1^{\delta_1} q_1^{\delta_2} \cdot v_1^2\\
		& = h^k v_1^{2i} (v_1 q_1)^{2j} v_1^{\delta_1} v_1^{2(-1)^{\delta_2}} h^{\delta_2(2b + 2)} q_1^{\delta_2}  
		 = h^k v_1^{2i} \cdot (v_1 q_1)^{2j} v_1^{2(-1)^{\delta_2}} \cdot v_1^{\delta_1} h^{\delta_2(2b + 2)} q_1^{\delta_2} \\ 
		& = h^k v_1^{2(i + (-1)^{\delta_2})} (v_1 q_1)^{2j} h^{-4(b + 1)(-1)^{\delta_2}j}
		v_1^{\delta_1}
		h^{\delta_2(2b + 2)} q_1^{\delta_2} \quad \mbox{(by Equation~\ref{eqn:vqhij})} \\
		& = h^{k -4(b + 1)(-1)^{\delta_2}j + (-1)^{\delta_1} \delta_2(2b + 2)} 
		v_1^{2(i + (-1)^{\delta_2})} (v_1 q_1)^{2j} v_1^{\delta_1}
		q_1^{\delta_2}. 
	\end{align*}
	It follows that \(i + (-1)^{\delta_2} = i + 1\), and so \(\delta_2 = 0\).
	In addition, it follows that \(k -4(-1)^{\delta_2}j + (-1)^{\delta_1}
	\delta_2(2b + 2) = k\). Plugging in \(\delta_2 = 0\) gives \(-4(b + 1)j =
	0\), and so \(j = 0\).  We can conclude that \(C_G(v_1^2) \subseteq \langle
	v_1, h \rangle\), and so these sets are equal.  

	We next show that \(C_G((v_1 q_1)^2) = \langle v_1 q_1, h \rangle\). It is
	immediate that \(\langle v_1 q_1, h \rangle \subseteq C_G((v_1 q_1)^2)\).
	Let \(g  =  h^k v_1^{2i} (v_1 q_1)^{2j} v_1^{\delta_1} q_1^{\delta_2}
	\in C_G((v_1q_1)^2)\), where \(i, j, k \in \Z\) and \(\delta_1, \delta_2
	\in \{0, 1\}\).  Then the projection \(\bar{g}  =  v_1^{2i} (v_1
	q_1)^{2j} v_1^{\delta_1} q_1^{\delta_2} \) of \(g\) in \(Q\) lies in
	\(C_Q((v_1 q_1)^2)\). Thus, using our earlier computations that \(v_1 (v_1
	q_1)^2 v_1^{-1} =_Q  (v_1 q_1)^{-2}\) and
	\(q_1 (v_1 q_1)^2 q_1^{-1} =_Q (v_1 q_1)^{-2}\), we have
	\begin{align*}
			v_1^{2i} (v_1 q_1)^{2(j + 1)} v_1^{\delta_1} q_1^{\delta_2} & =_Q
			(v_1 q_1)^2 \cdot v_1^{2i} (v_1 q_1)^{2j} v_1^{\delta_1}
			q_1^{\delta_2} \\ & =_Q v_1^{2i} (v_1 q_1)^{2j} v_1^{\delta_1}
			q_1^{\delta_2} (v_1 q_1)^2 & =_Q v_1^{2i} (v_1 q_1)^{2j +
			2(-1)^{\delta_1 + \delta_2}} v_1^{\delta_1} q_1^{\delta_2}.
	\end{align*}
	We can conclude that \(\delta_1 + \delta_2 \equiv 0 \mod 2\). Returning to
	\(G\), we can thus write \(g = h^k v_1^{2i} (v_1 q_1)^{2j} (v_1
	q_1)^{\delta}\), where \(\delta \in \{0, 1\}\). We have
	\begin{align*}
		h^k v_1^{2i} (v_1 q_1)^{2(j + 1)} (v_1 q_1)^\delta 
		& = (v_1 q_1)^2 \cdot h^k v_1^{2i} (v_1 q_1)^{2j} (v_1 q_1)^{\delta} 
		  = h^{k - 4i(b + 1)} v_1^{2i} (v_1 q_1)^{2(j + 1)} (v_1 q_1)^\delta.
	\end{align*}
	Thus \(i = 0\) and \(g = h^k (v_1 q_1)^{2j} (v_1 q_1)^\delta \in \langle h,
	v_1 q_1 \rangle\). We can conclude that \(C_G((v_1 q_1)^2) = \langle v_1
	q_1, h \rangle\), and thus this set is equationally definable.

Now set $x=v_1^2$ and $y=(v_1q_1)^2$ and let $S_x = \langle v_1^2,h^2 \rangle$ which is the set of all squares of elements of $\langle v_1, h \rangle$, and let $S_y = \langle (v_1q_1)^2, h^2 \rangle$
which is the set of all squares of elements of $\langle v_1q_1, h \rangle$. We proved above that  
$\langle v_1, h \rangle$ and $\langle v_1q_1, h \rangle$ are both equationally definable and so it follows that the sets $S_x$ and $S_y$ are also both equationally definable.  
We proved above that $[x^i,y^j] = h^{-4ij(b + 1)}$ and we have  \(\langle h \rangle = C_G(v_1^2) \cap C_G((v_1 q_1)^2)\), and thus $\langle h \rangle$ is equationally definable.
Also $S_x / \langle h \rangle = \langle v_1^2,h^2 \rangle / \langle h \rangle = \langle v_1^2 \rangle / \langle h \rangle$ and $S_y / \langle h \rangle = \langle (v_1q_1)^2, h^2 \rangle / \langle h \rangle = \langle (v_1q_1)^2 \rangle / \langle h \rangle$. Thus all the hypotheses of \cref{comm-undecidable} hold, and so 
Hilbert's tenth problem is
reducible to the Diophantine problem in \(G\), and so the latter is undecidable.
\end{proof}

\section{Seifert 3-manifold groups with negative 
characteristic and \\ statement of main result}
In this section we consider the Seifert 3-manifold groups $\pi_1(M)$ where  $M$ be a Seifert fibered manifold with base orbifold $B$ satisfies $\chi(B) < 0$.  If $G$ is such a group then by \cref{Seifert-Euler-qts} the quotient $G / \langle h \rangle$ is hyperbolic. In \cite{Liang2014} Liang proved that central extensions of hyperbolic groups have a decidable Diophantine problem. Hence if $G / \langle h \rangle$ is hyperbolic and $h$ is central then Liang's result implies that $G$ has a decidable Diophantine problem. However, in general in the case  $\chi(B) < 0$ the element $h$ need not be central. The following result shows that in the cases that $h$ is not central we can still find an index $2$ subgroup of the Seifert 3-manifold group in which the Diophantine problem is decidable.

\begin{proposition}\label{proposition:hyper} Let $G = \pi_1(M)$ where $M$ be a Seifert fibered manifold with base orbifold $B$ satisfies $\chi(B) < 0$. Then either $G$ has a decidable Diophantine problem or $G$ contains an index $2$ subgroup $H$ such that $H$ has a decidable Diophantine problem. In more detail:
\begin{enumerate} 
\item if $h$ is central then $G$ has a decidable Diophantine problem;    
\item if $h$ is not central then $G$ contains an index $2$ subgroup $H$ such that $H$ has a decidable Diophantine problem.    
  \end{enumerate}
  \end{proposition}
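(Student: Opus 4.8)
The plan is to realise both $G$ (in case (1)) and a suitable index $2$ subgroup $H$ (in case (2)) as central extensions of hyperbolic groups by $\Z$, so that the decidability of the Diophantine problem follows immediately from Liang's theorem \cite{Liang2014}. The common starting point is the observation that, by the presentations in \cref{Seifert-pres}, the subgroup $\langle h \rangle$ is normal in $G$ and every generator conjugates $h$ either to $h$ or to $h^{-1}$; hence the conjugation action induces a well-defined homomorphism $\phi \colon G \to \Aut(\langle h \rangle)$. Before using this I would first record that $h$ has infinite order, by the same argument as in \cref{h-inf-order}: by \cite{BoyerRolfsenWiest}*{Proposition 4.1(1)} the element $h$ has finite order only if $G$ is finite, but \cref{Seifert-Euler-qts} shows that $G$ surjects onto the infinite hyperbolic group $G/\langle h \rangle$, so $G$ is infinite. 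Thus $\langle h \rangle \cong \Z$ and $\Aut(\langle h \rangle) \cong C_2$.

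For part (1), if $h$ is central then $\langle h \rangle$ is a central subgroup of $G$, and by \cref{Seifert-Euler-qts} the quotient $G / \langle h \rangle$ is hyperbolic. Hence $G$ is a central extension of a hyperbolic group by $\Z$, and Liang's theorem \cite{Liang2014} gives that $G$ has a decidable Diophantine problem.

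For part (2), if $h$ is not central then some generator sends $h$ to $h^{-1}$, so $\phi$ is surjective and $H := \ker \phi$ has index $2$ in $G$. By construction $H = C_G(h)$, so $\langle h \rangle$ is central in $H$; moreover $\langle h \rangle \leq H$, and so $H / \langle h \rangle$ is an index $2$ subgroup of $G/\langle h \rangle$. Since hyperbolicity is inherited by finite index subgroups, $H/\langle h \rangle$ is hyperbolic, and therefore $H$ is again a central extension of a hyperbolic group by $\Z$. Applying Liang's theorem \cite{Liang2014} to $H$ then completes the proof.

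The routine verifications (normality of $\langle h \rangle$, surjectivity of $\phi$ in the non-central case, and the index computation for $H/\langle h \rangle$) are immediate from the presentation and the correspondence theorem. I expect the only point genuinely requiring care to be checking that the hypotheses of Liang's theorem are met exactly — namely that the relevant group really is a \emph{central} extension (not merely a normal one) of a hyperbolic group by an infinite cyclic group. This is precisely why the passage to the index $2$ centraliser $H$ is forced in case (2): when $h$ fails to be central in all of $G$, the full group $G$ is only a normal extension, and it is the restriction to $H = C_G(h)$ that recovers centrality while keeping the quotient hyperbolic.
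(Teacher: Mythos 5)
Your proposal is correct and takes essentially the same approach as the paper: in the non-central case the paper's index-$2$ subgroup $H$ (defined there via the parity of exponent sums over the generators that invert $h$) is exactly your $C_G(h) = \ker\bigl(G \to \Aut(\langle h \rangle)\bigr)$, and both arguments then apply Liang's theorem to the central extension $1 \to \langle h \rangle \to H \to H/\langle h \rangle \to 1$ after noting that $H/\langle h \rangle$ is hyperbolic as a finite index subgroup of $G/\langle h \rangle$. Your packaging via the conjugation homomorphism is arguably cleaner, and matches the device the paper itself uses in \cref{o2-1} and \cref{euclidean-Seifert-virt-nilp}.
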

\begin{proof} 
		Let $G = \pi_1(M)$ where $M$ be a Seifert fibered manifold with
		base orbifold $B$ satisfies $\chi(B) < 0$. If $h$ is central then by
		\cref{Seifert-Euler-qts} the group $G$ is a central extension of a
		hyperbolic group and thus has a decidable Diophantine problem by a theorem of Liang
		\cite{Liang2014}. Now suppose that $h$ is not central. In this
		case we shall identify an index two subgroup $H \leq G$ such that $H$
		is a central extension of a hyperbolic group, and thus $H$ has
		decidable Diophantine problem again by Liang's theorem \cite{Liang2014}. We
		define the subgroup $H$ of $G$ as follows. Set  
\[I = \{ i \in \{1, \ldots, g \} : v_i h = hv_i \}. \]
Note that if $B$ is orientable then it follows from the definitions that $u_ih=hu_i$ if and only if $i \in I$. Since $h$ is not central it follows that $I$ is non-empty. Now set
\[
\Delta = \{ u_i: i \in I \} \cup \{ v_i : i \in I \}.
\]
By inspection of the defining relations we see that if $\alpha$ and $\beta$ are any two words over the generating set with $\alpha =_G \beta$ then for all $\delta \in \Delta$ we have   
\[\mathrm{expsum}_{\delta}(\alpha) \equiv  \mathrm{expsum}_{\delta}(\beta) \pmod{2} \]
where $\mathrm{expsum}_{\delta}(\alpha)$ denotes the exponent sum of the letter $\delta$ in the word $\alpha$. 
(In fact this equality holds for any letter $u_i$ or $v_j$ and does not depend on the letter coming from the set $\Delta$.) It follows that for any element $h \in G$ the number       
\[\sum_{\delta \in \Delta} \mathrm{expsum}_{\delta}(h) \pmod{2} \]
is well-defined and so we can define  
\[H = \{ k \in G: \mbox{$ \sum_{\delta \in \Delta} \mathrm{expsum}_{\delta}(k) $ is even} \}. \]
Clearly $H$ is a subgroup of $G$. Furthermore this subgroup clearly has index $2$ in $G$ where the other coset of $H$ in $G$ is the set
\[\{ l \in G: \mbox{$ \sum_{\delta \in \Delta} \mathrm{expsum}_{\delta}(k) $ is odd} \}. \]
To complete the proof of the proposition it will suffice to prove $H$ is a central extension of a hyperbolic group. To see this note that for any $k \in H$ we have $hk=kh$ since $h$ commutes with $q$ and the definition of $H$ means that as we pass the letter $h$ through any word representing $k$ the letter $h$ will be flipped between $h$ and $h^{-1}$ an even number of times. Hence $h$ is central in $H$. Furthermore we have
\[[G / \langle h \rangle : H / \langle h \rangle] = [G:H] = 2 \]
So $H / \langle h \rangle$ is an index 2 subgroup of the hyperbolic group $G /
\langle h \rangle$ and thus $H / \langle h \rangle$ is also a hyperbolic group.
It follows that $H$ is a central extension of the hyperbolic group $H / \langle
h \rangle$ and thus $H$ has a decidable Diophantine problem by Liang's theorem
\cite{Liang2014}.   
  \end{proof}

We conjecture that all Seifert 3-manifold groups with 
negative orbifold Euler characteristic have decidable Diophantine problem.

We are now in a position to state and prove the first main result of this paper which has Theorem~A as an immediate corollary.   
\begin{theorem}
			\label{Seifert-DP}
Let $M$ be a Seifert fibered manifold with base orbifold $B$ and set $G=\pi_1(M)$. 	
Then, using the notation of \cref{Seifert-pres}, we have the following: 
				\begin{enumerate}
								\item If \(\chi(B) < 0\), then 
\begin{enumerate} 
\item \(G\) has a decidable Diophantine problem in the cases: (o1) and (n1);
\item \(G\) has an index $2$ subgroup $H$ with  decidable Diophantine problem in all the other cases that $\chi(B) < 0$.  
  \end{enumerate}
								\item If \(\chi(B) = 0\), then:
												\begin{enumerate}
													\item If \(g = 0\) then \(G\) has an undecidable Diophantine
												problem, except in the following cases:
									\begin{enumerate}
													\item \(G\) is in (0; 2, 2, 2, 2) and \(b = -1\);
													\item \(G\) is in (0; 3, 3, 3) and \(\beta_1 + \beta_2 + \beta_3 + 3b = 0\);
													\item \(G\) is in (0; 2, 4, 4) and \(2 + \beta_2 + \beta_3 + 4b = 0\);
													\item \(G\) is in (0; 2, 3, 6) and \(3 + 2 \beta_2 + \beta_3 + 6b + 3 = 0\).
									\end{enumerate}
									In each of these cases \(G\) has a 
decidable first-order theory with ETD0L solutions. In particular $G$ has a decidable Diophantine problem with EDT0L solutions.
					\item If \(g = 1\) and \(G\) has type (o1) then \(G\) has a decidable
									Diophantine problem if and only if \(b = 0\), in which case
$G$ has a decidable first-order theory with ETD0L solutions;
	\item If \(g = 1\) and \(G\) has type (n2), then \(G\) has
							a decidable Diophantine problem if and only if
							\(b = -1\), in which case 
$G$ has a decidable first-order theory with ETD0L solutions.
\item If \(g = 2\) and \(G\) has type (n2), then \(G\) has a
									decidable Diophantine problem if and only if \(b = 0\),
									in which case 
$G$ has a decidable first-order theory with ETD0L solutions.
					\item 
In all of the following cases: 
\begin{enumerate} 
\item 
\(g = 1\) and \(G\) has type (o2); or
\item
\(g = 1\) and \(G\) has type (n1); or  
\item
\(g = 2\) and \(G\) has type (n1) or (n3);  
\end{enumerate}
the group \(G\) has a decidable first-order theory with ETD0L solutions. In particular $G$ has a decidable Diophantine problem with EDT0L solutions.
												\end{enumerate}
								\item If \(\chi(B) > 0\), then \(G\) has a 
decidable first-order theory with ETD0L solutions. In particular $G$ has a decidable Diophantine problem with EDT0L solutions.
				\end{enumerate}
  Furthermore, in all cases when the Diophantine problem is undecidable  
Hilbert's tenth problem over $\Z$  is reducible to the Diophantine problem in $G$.   
\end{theorem}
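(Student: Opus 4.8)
The plan is to prove the theorem by the case analysis on the sign of $\chi(B)$ provided by \cref{Seifert-Euler-qts}, in each case assembling the relevant result proved earlier in the paper; the three regimes $\chi(B)>0$, $\chi(B)=0$ and $\chi(B)<0$ draw on entirely different inputs, so I would treat them in turn. For $\chi(B)>0$ the argument is immediate: \cref{Seifert-Euler-qts} gives that $G/\langle h \rangle$ is finite, and since $\langle h \rangle$ is cyclic the group $G$ is virtually cyclic, hence virtually abelian, so the claim follows from \cref{rmk:virtuallyAbelianEDT0L}. For $\chi(B)<0$ I would invoke \cref{proposition:hyper} directly; the only extra bookkeeping is to read off from the presentations in \cref{Seifert-pres} that $h$ is central precisely in types (o1) and (n1), where every exponent $\epsilon$ or $\epsilon_i$ equals $+1$. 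In those two types part (1) of \cref{proposition:hyper} gives decidability of the Diophantine problem for $G$ itself, while in every remaining type some exponent is $-1$, so $h$ is non-central and part (2) supplies the required index-$2$ subgroup with decidable Diophantine problem.

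The substance of the proof is the case $\chi(B)=0$, where I would match each of the seven families listed in \cref{Seifert-Euler-qts} to the lemma resolving it, after checking that eliminating the redundant generators from the presentation of \cref{Seifert-pres} produces exactly the presentation analysed there. Specifically, the four genus-$0$ families $(0;2,2,2,2)$, $(0;3,3,3)$, $(0;2,4,4)$ and $(0;2,3,6)$ are handled by \cref{o1-0-2-2-2-2,o1-0-3-3-3,o1-0-2-4-4,o1-0-2-3-6} respectively, each giving the stated arithmetic criterion for decidability and, when it fails, the reduction from Hilbert's tenth problem. The family $(1;)$ splits by type into \cref{o1-1} (type o1: decidable iff $b=0$) and \cref{o2-1} (type o2: always virtually abelian); the family $(1;2,2)$ is covered by \cref{n12-1-2-2} (type n1 virtually abelian, type n2 decidable iff $b=-1$); and the family $(2;)$ is covered by \cref{n123-2} (types n1 and n3 virtually abelian, type n2 decidable iff $b=0$). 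In each subcase that is virtually abelian, the stronger conclusion on the first-order theory with EDT0L solutions once more comes from \cref{rmk:virtuallyAbelianEDT0L}, and the final sentence of the theorem, that Hilbert's tenth problem reduces to the Diophantine problem in every undecidable case, is recorded inside each lemma cited and so transfers verbatim.

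I expect the main obstacle to be organisational rather than mathematical: one has to confirm that the seven families of \cref{Seifert-Euler-qts}, refined by the type constraints and admissible parameter ranges (for $\epsilon$, the $\epsilon_i$ and the $\beta_j$) of \cref{Seifert-pres}, partition the $\chi(B)=0$ groups with no gaps or overlaps, and that these ranges coincide exactly with the hypotheses of the lemmas being applied. The genuinely mathematical point needing care is the verification that, for instance in the non-orientable families, no further type among (n3), (n4) can occur for the small genus values $g=1,2$ forced by $\chi(B)=0$, so that the matching above is exhaustive.
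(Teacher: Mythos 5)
Your proposal is correct and follows essentially the same route as the paper: a three-way split on the sign of $\chi(B)$, invoking \cref{proposition:hyper} for the negative case (with the same bookkeeping that $h$ is central exactly in types (o1) and (n1)) and assembling \cref{o1-0-2-2-2-2,o1-0-3-3-3,o1-0-2-4-4,o1-0-2-3-6,o1-1,o2-1,n12-1-2-2,n123-2} family-by-family for the zero case, with \cref{rmk:virtuallyAbelianEDT0L} supplying the EDT0L conclusions. The only (harmless) divergence is in the case $\chi(B)>0$, where you observe directly that $\langle h\rangle$ is a finite-index cyclic subgroup, whereas the paper deduces virtual abelianness via a quasi-isometry argument; both are valid.
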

	\begin{proof}
		\textbf{Case 1:} \(\chi(B) = 0\). By \cref{Seifert-Euler-qts}, this case occurs
		precisely when: 
\begin{multicols}{2}
	\begin{itemize}
		\item Type (o1), (0; 2, 2, 2, 2);
		\item Type (o1), (0; 3, 3, 3);
		\item Type (o1), (0; 2, 4, 4);
		\item Type (o1), (0; 2, 3, 6); 
		\item Type (o1-2), (1;);
		\item Type (n1-2), (1; 2, 2);
		\item Type (n1-3), (2;).
	\end{itemize}
  \end{multicols}
		If \(g = 0\), then \cref{o1-0-2-2-2-2},
		\cref{o1-0-3-3-3}, \cref{o1-0-2-4-4} and \cref{o1-0-2-3-6}, together with
		the fact that virtually abelian groups have the stated properties
by Remark~\ref{rmk:virtuallyAbelianEDT0L}l	
give the result. If \(g = 1\), then
		\cref{o1-1}, \cref{o2-1} and \cref{n12-1-2-2} 
		give the result. If \(g = 2\), then \cref{n123-2} gives the result.

		\textbf{Case 2:} \(\chi(B) > 0\).

		From \cref{Seifert-Euler-qts} if \(\chi(B) >
		0\), then \(G / \langle h \rangle\) is finite, in which case \(G\) is
		finite-by-[f.g. abelian]. It follows that \(G\) is quasi-isometric to an
		abelian group, and hence by e.g. \cite[Section~50]{delaHarpe} is virtually abelian.
		This deals with this case since virtually abelian groups have the desired properties by Remark~\ref{rmk:virtuallyAbelianEDT0L}.

		\textbf{Case 3:} \(\chi(B) < 0\).

This case is covered by Proposition~\ref{proposition:hyper}.		
	\end{proof}

\section{The single equation problem}
All 3-manifold groups are known to have decidable conjugacy problem. Above we have seen that there are 3-manifold groups with undecidable Diophantine problem. Since conjugacy is defined using a single equation it is natural to ask whether the single equation problem might be decidable for 3-manifold groups. In this section we consider that problem for Seifert manifold groups. We shall prove: 

\begin{theorem}\label{thm:singleEqn} 
Let $M$ be a Seifert fibered manifold with base orbifold $B$.
\begin{enumerate} 
\item If $\chi(B) \geq  0$ then the Seifert manifold group $G=\pi_1(M)$ has a decidable single equation problem. 
\item If $\chi(B) <  0$ then either $\pi_1(M)$ has a decidable single equation problem or it has an index 2 subgroup that does.   
  \end{enumerate}
  \end{theorem}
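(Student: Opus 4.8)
The plan is to treat the two parts separately; part~(2) is immediate, and part~(1) contains all of the work, concentrated in the case $\chi(B)=0$. For part~(2), when $\chi(B)<0$, \cref{proposition:hyper} already gives either that $G$ has a decidable Diophantine problem or that $G$ has an index~$2$ subgroup $H$ with a decidable Diophantine problem; since the single equation problem is a special case of the Diophantine problem, the same group (either $G$ or $H$) then has a decidable single equation problem, and part~(2) follows at once. For part~(1) with $\chi(B)>0$, \cref{Seifert-Euler-qts} shows $G/\langle h\rangle$ is finite, so $G$ is finite-by-(f.g.\ abelian) and hence virtually abelian, exactly as in Case~2 of the proof of \cref{Seifert-DP}; virtually abelian groups have a decidable Diophantine problem by \cref{rmk:virtuallyAbelianEDT0L}, and hence a decidable single equation problem. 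The same remark disposes of every $\chi(B)=0$ case in which $G$ is virtually abelian, so the only substantive case is $\chi(B)=0$ with $G$ not virtually abelian.

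In that case I would work directly with $G$, rather than passing to a finite-index subgroup, since decidability of the single equation problem is not known to be inherited by or from finite-index subgroups. By \cref{Seifert-Euler-qts,h-inf-order}, $G$ fits in an extension $1\to\langle h\rangle\to G\to Q\to 1$ with $\langle h\rangle\cong\Z$ and $Q$ virtually $\Z^2$, and the case analysis carried out in \cref{o1-1,o1-0-2-2-2-2,o1-0-3-3-3,o1-0-2-4-4,o1-0-2-3-6,n123-2,n12-1-2-2} furnishes explicit normal forms of the shape $h^{k}A^{i}B^{j}t$, where $A,B$ project to generators of a free abelian finite-index normal subgroup $\Z^2\le Q$, the transversal element $t$ ranges over a fixed finite set, the commutator satisfies $[A,B]=h^{c}$ for a fixed $c$, and conjugation by the generators sends $h\mapsto h^{\pm1}$ and permutes $A,B$ up to inversion modulo $\langle h\rangle$. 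The idea is to adapt the method of Duchin, Liang and Shapiro \cite{duchin_liang_shapiro}, who decide the single equation problem in the Heisenberg group by reducing it to a single quadratic Diophantine equation.

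Concretely, given $w(X_1,\dots,X_n)=1$, write each variable as $X_s=h^{k_s}A^{i_s}B^{j_s}t_s$ and branch over the finitely many choices of $(t_1,\dots,t_n)$; a finite disjunction of decidable problems is decidable, so it suffices to decide each branch. Within a fixed branch, collecting $w$ into the normal form $h^{K}A^{I}B^{J}T$ is a deterministic computation in which $T$ is a fixed constant, the exponents $I$ and $J$ are affine-linear in the $(i_s,j_s)$ (all commutator corrections land in $\langle h\rangle$ and so contribute only to $K$), and $K$ is affine-linear in the $(k_s)$ plus a quadratic contribution in the $(i_s,j_s)$ coming from $[A,B]=h^{c}$ and from the sign-flips $h\mapsto h^{\pm1}$, with all coefficients determined by the branch. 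Thus $w=1$ in this branch if and only if $T=1$, $I=J=0$, and $K=0$. Only the last condition is non-linear: I would discard the branch unless $T=1$, solve the linear system $I=J=0$ over $\Z$ and parametrize its solution lattice, and substitute to reduce $K=0$ to a single equation $L(\vec k)+P(\vec t)=0$, with $L$ linear in the free exponents $\vec k$ and $P$ quadratic in the lattice parameters $\vec t$. If some $k_s$ occurs in $L$, solvability reduces to a finite residue check for $P$ modulo the $\gcd$ of the coefficients of $L$; if not, $K=0$ is a single quadratic Diophantine equation in $\vec t$, whose solvability is decidable by the classical ingredient underpinning \cite{duchin_liang_shapiro}. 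Either way the branch is decidable.

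The main obstacle is the verification, in each non-virtually-abelian family above, that the normal-form coordinates really do split as two affine-linear coordinates plus one coordinate of the stated linear-plus-quadratic type, and that the relevant conjugation actions are as claimed. This is precisely where the rank-$1$ commutator subgroup $\langle h\rangle\cong\Z$ is essential: it yields a \emph{single} quadratic equation rather than a system, which is exactly what distinguishes the decidable single equation problem here from the undecidable Diophantine problem of \cref{Seifert-DP}.
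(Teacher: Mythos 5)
Your treatment of part~(2), of the case $\chi(B)>0$, and of the virtually abelian $\chi(B)=0$ cases coincides with the paper's (everything there is delegated to \cref{Seifert-DP}, i.e.\ to \cref{proposition:hyper} and \cref{rmk:virtuallyAbelianEDT0L}). For the substantive case --- $\chi(B)=0$ with $G$ not virtually abelian --- you take a genuinely different route. The paper proves \cref{euclidean-Seifert-virt-nilp}, namely that $\pi_1(M)$ has a finite-index subgroup $N$ that is class~$2$ nilpotent with cyclic $[N,N]$, and then quotes \cite{Levine22}*{Theorem 1.1}; that cited theorem is stated for groups that are \emph{virtually} of this form, so the finite-index transfer issue you are (rightly, in general) worried about simply does not arise --- it is absorbed into the black box. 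You instead give a self-contained, constructive argument working directly in $G$: branch over transversal coordinates, observe that in each branch the normal-form exponents of a word value are two affine-linear coordinates plus one coordinate that is linear in the $h$-exponents and quadratic in the $\Z^2$-exponents, and reduce to a single quadratic Diophantine equation decidable by Grunewald--Segal, exactly as Duchin--Liang--Shapiro do for the Heisenberg group. This is a valid strategy, and the data needed to verify the polynomial structure (the commutators $[A,B]=h^c$ and the conjugation actions of the transversal elements) is already computed in \cref{o1-1,o1-0-2-2-2-2,o1-0-3-3-3,o1-0-2-4-4,o1-0-2-3-6,n123-2,n12-1-2-2}; the verification you flag as the ``main obstacle'' is routine but genuinely case-by-case and lengthy, which is precisely the labour the paper avoids by citing \cite{Levine22}. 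Two small inaccuracies in your description, neither fatal: in the $(0;2,3,6)$ case conjugation by a transversal element acts on $\langle A,B\rangle$ modulo $\langle h\rangle$ by a general element of $\GL_2(\Z)$ (e.g.\ $x\mapsto (yx)^{-1}$), not merely by a signed permutation of $A$ and $B$ --- affine-linearity of $I,J$ survives, but your bookkeeping should allow for this; and recombining expressions like $(A^aB^b)^{i}$ into normal form introduces terms of the shape $\binom{i}{2}$, so your quadratic $P$ may have half-integer coefficients and should be cleared of denominators before invoking decidability of a single quadratic equation. In short: your proof is correct in outline and more explicit than the paper's, at the cost of a substantial verification that the paper outsources to the literature.
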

We conjecture that all Seifert 3-manifold groups have decidable single equation problem. We also ask whether this extends to all 3-manifold groups: 
 
\begin{qu}
Is the single equation problem solvable for the fundamental group of any $3$-manifold? 
  \end{qu}
 
We can further ask if the set of solutions to a single equation in a
\(3\)-manifold group can be expressed using an EDT0L language. This has been
shown to be true for virtually abelian groups \cite{VAEP}, but other Seifert
\(3\)-manifold groups have not been covered. There has been some work on the
nilpotent case, with single equations with one variable in the Heisenberg group
having been shown to have EDT0L solutions \cite{LevineHeisEDT0L}. 

\begin{qu}
Is the set of solutions to a single equation in the fundamental group of any $3$-manifold expressible as an EDT0L language? 
  \end{qu}

	We begin with a statement about Seifert 3-manifold groups where the base
	orbifold has characteristic zero.

	\begin{proposition}
		\label{euclidean-Seifert-virt-nilp}
		Let \(M\) be a Seifert fibered manifold with base orbifold \(B\), such
		that \(\chi(B) = 0\). Then \(\pi_1(M)\) admits a finite index subgroup
		\(N\) such that \(N\) is class \(2\) nilpotent and \([N, N]\) is
		cyclic.
	\end{proposition}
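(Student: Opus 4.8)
The plan is to exhibit $N$ as a central extension of $\Z^2$ by $\Z$, from which both required properties follow formally. Throughout I would use three facts established earlier: that $\langle h\rangle$ is normal in $G=\pi_1(M)$ (immediate from the presentations in \cref{Seifert-pres}, where every generator conjugates $h$ to $h^{\pm 1}$), that $h$ has infinite order by \cref{h-inf-order}, and that $Q := G/\langle h\rangle$ is virtually $\Z^2$ by \cref{Seifert-Euler-qts}.

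First I would pass to the centraliser of $h$. Since $\langle h\rangle \cong \Z$ has automorphism group $C_2$, the conjugation action gives a homomorphism $\phi\colon G \to \Aut(\langle h\rangle) \cong C_2$ whose kernel is $K := C_G(h)$, a subgroup of index at most $2$ in $G$ in which $h$ is central. Next I would shrink $K$ so that the quotient becomes free abelian of rank $2$. The image $K/\langle h\rangle$ is a finite-index subgroup of the virtually-$\Z^2$ group $Q$, hence is itself virtually $\Z^2$ and so contains a finite-index subgroup isomorphic to $\Z^2$. Letting $N$ be the preimage of this subgroup under the quotient map $K \to K/\langle h\rangle$, we obtain a subgroup of finite index in $G$ that contains $\langle h\rangle$, satisfies $N/\langle h\rangle \cong \Z^2$, and has $h$ central (since $N \le K$). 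Thus $N$ is a central extension
\[
  1 \longrightarrow \langle h\rangle \longrightarrow N \longrightarrow \Z^2 \longrightarrow 1
\]
with $\langle h\rangle \cong \Z$ central.

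Finally the two assertions about $N$ fall straight out of this extension. Because the quotient $\Z^2$ is abelian we have $[N,N] \le \langle h\rangle$; and since $\langle h\rangle$ is central, $[N,[N,N]] \le [N,\langle h\rangle] = 1$, so $N$ is nilpotent of class at most $2$. Moreover $[N,N]$ is a subgroup of $\langle h\rangle \cong \Z$, and every subgroup of $\Z$ is cyclic, so $[N,N]$ is cyclic, as required.

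There is no serious obstacle in this argument; the one point requiring care is ensuring that $h$ is genuinely central in $N$, which is precisely why I would first intersect with $K = C_G(h)$ and only then extract a rank-$2$ free abelian quotient, rather than selecting an arbitrary $\Z^2$ inside $Q$ (whose preimage need not centralise $h$).
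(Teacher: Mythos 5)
Your proposal is correct and follows essentially the same route as the paper's proof: pass to the index-at-most-$2$ centraliser $C_G(h)$, pull back a finite-index $\Z^2$ from the virtually-$\Z^2$ quotient to get a central extension $1 \to \Z \to N \to \Z^2 \to 1$, and read off class-$2$ nilpotency and cyclicity of $[N,N]$ from that extension. The only cosmetic difference is that you deduce the final step formally from the central extension, whereas the paper spells it out via the normal form $a^i b^j h^k$; both are fine.
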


	\begin{proof}
		We begin by showing that \(G  =  \pi(M)\) admits a finite index
		subgroup \(E\) that is a central extension \(1 \to \Z \to E \to R \to
		1\), where \(Q\) is virtually \(\Z^2\). Using the notation of
		\cref{Seifert-pres}, we have from \cref{h-inf-order} that \(\langle h
		\rangle \cong \Z\), and from \cref{Seifert-Euler-qts} that \(G /
		\langle h \rangle\) is virtually \(\Z^2\).

		Set \(E = C_G(h)\). Clearly, \(\langle h \rangle \leq E\) is
		isomorphic to \(\Z\) and central in \(E\). Moreover, if
		\(E\) is finite index in \(G\), then \(E / \langle h \rangle\)
		is finite index in \(G / \langle h \rangle\), which is virtually
		\(\Z^2\). In order to show that \(E\) is indeed a central
		extension \(1 \to \Z \to E \to R \to 1\), with \(R\)
		virtually \(\Z^2\), it thus suffices to show that \(E\)
		has finite index in \(G\). We have that \(E = C_G(h)\) is
		the kernel of the action of \(G\) on \(\langle h \rangle\)
		by conjugation. The induced homomorphism of this action
		goes from \(G\) to \(\Aut(\Z) \cong C_2\), and thus its kernel
		(namely \(E\)) has index at most \(2\), as required.

		We next show that \(E\) (and hence \(G\)) admits a finite index
		subgroup \(N\) such that \(N\) is a central extension \(1 \to \Z \to N
		\to \Z^2 \to 1\). Since \(R\) is virtually
		\(\Z^2\), it admits a finite index subgroup \(A\), with \(A \cong
		\Z^2\). Let \(\phi \colon E \to R\) be the epimorphism from the
		short-exact sequence, and define \(N = \{g \in E : (g)\phi\in
		A\}\). Since \(\phi\) is a homomorphism, \(N\) is indeed a subgroup,
		and since \(\langle h \rangle = \ker \phi\), we have that
		\(\langle h \rangle \leq N\). Then 
		\([E:N] = [E / \langle h \rangle : N / \langle h \rangle] = [R: A] <
		\infty\). Thus \(N\) is indeed a central extension \(1 \to \Z \to
		N \to \Z^2 \to 1\).

		At this point, it is not difficult to deduce that \(N\) is class \(2\)
		nilpotent, however we will show this completely, since we need to
		compute \([N, N]\) to show it is virtually cyclic. We have that
		\(N\) is generated by \(\{h, a, b\}\), where \(a\) and \(b\) are
		lifts of the two generators of \(\Z^2\) to \(N\). Moreover, every
		element of \(N\) can be expressed in the form \(a^i b^j h^k\),
		since \(h\) is central, and \([a, b]\) lies in the kernel of
		the homomorphism from \(N\) to \(\Z^2\), which is 
		\(\langle h \rangle\). Thus commutators must lie in \(\langle
		h \rangle\), and so \([N, N] \leq \langle h \rangle\), and hence
		must be cyclic. Since \(\langle h \rangle\) is central, we have
		shown that \([N, N]\) is, and hence \(N\) is class \(2\) nilpotent.
	\end{proof}

\begin{proof}[Proof of \cref{thm:singleEqn}]
 	Let \(B\) be the base orbifold of \(M\). If \(\chi(B) \neq 0\), then
		the result follows from \cref{Seifert-DP}. 
  If \(\chi(B) = 0\), then
		\cref{euclidean-Seifert-virt-nilp} tells us that \(\pi_1(M)\) is
		virtually a class \(2\) nilpotent group with a cyclic commutator
		subgroup, and thus has a decidable single equation problem, by
		\cite{Levine22}*{Theorem 1.1}.
\end{proof}

\section{Other $3$-manifold groups}\label{sec:other}
We conclude the paper by making some observations about what is currently known about decidability of the Diophantine problem and first-order theory for other $3$-manifold groups. 

\subsection{Fundamental groups of manifolds modeled on 3-dimensional Sol geometry}

As discussed in the introduction above Seifert fibered spaces account for all compact oriented manifolds in six of the eight Thurston geometries; see e.g. \cite[Theorem 1.8.1]{AschenbrennerBook2015}. One of the remaining two Thurston geometries is the so-called 3-dimensional Sol geometry. We do not know whether such groups have decidable Diophantine problem, but they do have an undecidable first-order theory as we observe in the next result. 

\begin{proposition}\label{prop_sol} 
Let $M$ be a manifold modeled on 3-dimensional Sol geometry.  Then $\pi_1(M)$ has undecidable first-order theory.   
  \end{proposition}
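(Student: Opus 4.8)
The plan is to exploit the very restricted algebraic structure of Sol-manifold groups and then invoke a known decidability characterisation for finitely generated virtually solvable groups.

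First I would recall the structure of $\pi_1(M)$. A closed $3$-manifold $M$ modelled on Sol geometry is finitely covered by a torus bundle over the circle with Anosov monodromy; equivalently $\pi_1(M)$ is a lattice in the three-dimensional Sol Lie group, and hence (by Mostow's theorem on lattices in simply connected solvable Lie groups) a finitely generated virtually polycyclic group. Concretely, $\pi_1(M)$ contains a finite-index subgroup isomorphic to $\mathbb{Z}^2 \rtimes_A \mathbb{Z}$, where $A \in \GL_2(\mathbb{Z})$ is hyperbolic, that is $|\mathrm{tr}(A)| > 2$ (see e.g. \cite{AschenbrennerBook2015}). The qualitative feature I want to extract is that such a group is \emph{not} virtually abelian: since $A$ has a real eigenvalue of absolute value greater than $1$, the group $\mathbb{Z}^2 \rtimes_A \mathbb{Z}$ has exponential growth, whereas every virtually abelian (indeed every virtually nilpotent) group has polynomial growth by the Milnor--Wolf and Gromov theorems. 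Thus $\pi_1(M)$ is finitely generated and virtually solvable but not virtually abelian.

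The decisive step is then to apply Noskov's theorem on the elementary theory of finitely generated virtually solvable groups \cite{Noskov1983}, which asserts that such a group has decidable first-order theory if and only if it is virtually abelian. Combining this with the previous paragraph immediately yields that $\pi_1(M)$ has undecidable first-order theory. A convenient point here is that, because Noskov's result is already stated for virtually solvable groups, it applies to $\pi_1(M)$ \emph{directly}; there is no need to pass to the finite-index torus-bundle subgroup, and so the (genuinely open) question of whether decidability of the first-order theory is inherited by or from finite-index subgroups never arises.

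I expect the main obstacle to be not the group theory but the correct invocation of the black-box citation: one must check that the cited characterisation really covers the precise class arising here (finitely generated virtually polycyclic groups, possibly with torsion in the non-orientable case) and that it concerns the full first-order theory with constants rather than the constant-free theory. As a self-contained fallback, avoiding Noskov's theorem, I would instead interpret the ring $(\mathbb{Z}, +, \cdot)$ directly: the relevant $\mathbb{Z}^2$ (the Fitting subgroup, or a finite-index piece of the derived subgroup) is first-order definable and carries a definable additive structure, while conjugation by a generator of the $\mathbb{Z}$-quotient realises the matrix $A$, so that $\mathbb{Z}$ acts through the powers $A^n$. The hard part of this alternative route is that the ring one reads off in this way is the real quadratic order $\mathbb{Z}[A] \cong \mathbb{Z}[x]/(p(x))$, with $p$ the characteristic polynomial of $A$, rather than $\mathbb{Z}$ itself; one would then have to give a first-order definition of the subring $\mathbb{Z}$ inside this order, which is exactly the delicate technical point, and the reason I would prefer to quote Noskov's theorem provided its statement applies verbatim.
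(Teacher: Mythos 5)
Your proof is correct and follows essentially the same route as the paper: both arguments reduce to the Er\v{s}ov--Romanovski\u{\i}--Noskov theorem that a finitely generated solvable (or virtually solvable) non-virtually-abelian group has undecidable first-order theory. The only difference is cosmetic --- the paper reads off ``solvable but not virtually nilpotent'' directly from a table in Aschenbrenner--Friedl--Wilton, whereas you establish non-virtual-abelianness from the explicit $\mathbb{Z}^2 \rtimes_A \mathbb{Z}$ structure via a growth argument; your fallback interpretation of $(\mathbb{Z},+,\cdot)$ is not needed.
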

\begin{proof} 
It is known (see e.g. \cite[Table~1.1]{AschenbrennerBook2015}) in this case   
$\pi_1(M)$ is a solvable but not virtually nilpotent group. Hence 
$\pi_1(M)$  is solvable but not virtually abelian. 
But results of Er\v{s}ov, Romanovski\u{i} and Noskov 
\cite{ershov1972elementary, romanovskiui1981elementary,noskov1984elementary} 
show that the first-order theory of any finitely generated non-virtually abelian solvable group is undecidable. Hence in particular $\pi_1(M)$ has 
undecidable first-order theory. 
\end{proof}
There are other 3-manifold groups that are solvable but not virtually abelian, and so the proof of the above proposition would also apply to those too. Specifically in the paper \cite{EvansSolvable1972} the authors give a classification of
those solvable groups which can occur as the fundamental group of a compact
3-manifold. Their results therefore thus give several other infinite families of $3$-manifold groups with undecidable first-order theory.   
The following result is a corollary of \cite[Corollary~4.6]{EvansSolvable1972} and \cite{ershov1972elementary, romanovskiui1981elementary,noskov1984elementary}

\begin{cor}\label{corol_evans} 
Let $G$ be a group from one of the following families 
\begin{enumerate} 
\item An extension 
$1 \rightarrow A \rightarrow G \rightarrow \Z \rightarrow 1$  
where $A$ is either $\Z \times \Z$ or the fundamental group $\mathcal{K}$ of the Klein bottle; or     
\item A group defined by a presentation of the form 
\[ \langle a,b,x,y \mid bab^{-1} = a^{-1}, \; yxy^{-1} = x^{-1}, \; a = x^py^{2q}, \; b^2 = x^ry^{2s} \rangle \]
where $p,q,r,s$ are integers satisfying $ps - rq = \pm 1$. 
\end{enumerate} 
Then $G$ is a $3$-manifold group with undecidable first-order theory unless $G$ is virtually abelian. 
\end{cor}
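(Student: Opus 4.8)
The plan is to combine the two cited results. The key structural input is \cite[Corollary~4.6]{EvansSolvable1972}, which lists the finitely generated solvable groups that occur as fundamental groups of compact $3$-manifolds; the two families in the statement are among those on that list. Invoking it gives at once that every $G$ in either family is a $3$-manifold group and that $G$ is finitely generated and solvable, so there is nothing further to prove for the ``$3$-manifold group'' clause.

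For transparency I would also record the solvability directly where it is easy to see. In family (1) the normal subgroup $A$ is either $\Z \times \Z$ or the Klein bottle group $\mathcal{K} \cong \Z \rtimes \Z$; the former is abelian and the latter metabelian, so in both cases $G$, being an extension of $A$ by $\Z$, is polycyclic and hence solvable. In family (2) the relations $yxy^{-1}=x^{-1}$ and $bab^{-1}=a^{-1}$, together with $a=x^p y^{2q}$ and $b^2=x^r y^{2s}$, exhibit $G$ as assembled from Klein-bottle-type pieces; that this $G$ is solvable is part of the classification in \cite[Corollary~4.6]{EvansSolvable1972}. Finite generation is immediate from the presentations.

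The final step is to apply the theorem of Er\v{s}ov, Romanovski\u{i} and Noskov \cite{ershov1972elementary, romanovskiui1981elementary, noskov1984elementary}, already used in the proof of \cref{prop_sol}: every finitely generated solvable group that is not virtually abelian has undecidable first-order theory. Hence if $G$ is not virtually abelian, then $G$ is a finitely generated solvable non-virtually-abelian group and therefore has undecidable first-order theory, which is precisely the assertion.

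The substance of the corollary lies entirely in the two cited theorems, so there is no essential combinatorial difficulty. The only point needing care is the matching of the two displayed families against the precise cases of the classification in \cite[Corollary~4.6]{EvansSolvable1972}, so as to be certain that solvability (and not merely the $3$-manifold property) is inherited; since this is exactly what that result supplies, the verification is routine.
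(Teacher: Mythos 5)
Your proposal is correct and follows exactly the paper's (implicit) argument: the corollary is stated as an immediate consequence of the Evans--Moser classification \cite[Corollary~4.6]{EvansSolvable1972}, which certifies that the listed groups are solvable $3$-manifold groups, combined with the Er\v{s}ov--Romanovski\u{i}--Noskov theorem on finitely generated non-virtually-abelian solvable groups. Your added direct verification of solvability for family (1) is a harmless bonus but does not change the route.
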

We currently do not know whether the previous corollary also holds true with first-order theory replaced by the Diophantine problem. For part (1) this relates to the problem of understanding the Diophantine problem for abelian-by-cyclic groups which has received attention in the literature e.g. in \cite{kharlampovich2020diophantine, Dong2025}.  
This relates to the following natural question arising from the work in this paper: 
\begin{qu} 
Is there a 3-manifold group with a decidable Diophantine problem but undecidable first-order
theory?  
  \end{qu}

\subsection{Constructing further examples}

The following easy proposition and corollary are included just to illustrate how the Seifert manifolds  with undecidable Diophantine problem described in this paper can be used to build other 3-manifold groups whose fundamental groups also have undecidable Diophantine problem.

\begin{proposition} 
Let $G$ be a Seifert $3$-manifold group with non-trivial centre and let $H$ be any group. If $G$ has undecidable Diophantine problem then so does $G \ast H$.  
  \end{proposition}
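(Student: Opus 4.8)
The plan is to reduce the Diophantine problem in $G$ to the Diophantine problem in $G \ast H$; since the former is undecidable by hypothesis, the latter must be as well. If $H$ is trivial then $G \ast H = G$ and there is nothing to prove, so I assume $H \neq \{1\}$. Because $G$ has non-trivial centre we may fix some $z \in Z(G)$ with $z \neq 1$. The idea is that the single equation $[X, z] = 1$ over $G \ast H$ cuts out precisely the free factor $G$, and so it can be used as a constraint forcing each variable to take its value inside $G$. Concretely, given a system $\mathcal E$ of equations over $G$ in variables $X_1, \ldots, X_n$, I would form the system $\mathcal E'$ over $G \ast H$ consisting of the equations of $\mathcal E$, read inside $G \ast H$ via the inclusion $G \hookrightarrow G \ast H$, together with the commutation equations $[X_i, z] = 1$ for $i = 1, \ldots, n$. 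The assignment $\mathcal E \mapsto \mathcal E'$ is plainly algorithmic, since $z$ is a fixed constant of $G$.

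The crux is the following standard fact about free products, which I would isolate as a lemma: if $A$ and $B$ are groups and $a \in A \setminus \{1\}$, then $C_{A \ast B}(a) = C_A(a)$. I would prove this using the action of $A \ast B$ on its Bass--Serre tree $T$, whose edge stabilisers are trivial and one of whose vertices $v_A$ has stabiliser $A$. A non-trivial $a \in A$ is elliptic, and its fixed-point set is a subtree; were this subtree to contain two adjacent vertices it would contain the edge between them, forcing $a$ into a trivial edge stabiliser, a contradiction. Hence $\Fix(a) = \{v_A\}$. Any $w$ commuting with $a$ satisfies $w \cdot \Fix(a) = \Fix(waw^{-1}) = \Fix(a)$, so $w$ fixes $v_A$ and therefore $w \in \Stab(v_A) = A$; this gives $C_{A \ast B}(a) \subseteq A$, and the reverse inclusion is immediate. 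Applying this with $A = G$, $B = H$, and $a = z$, and using that $z$ is central in $G$, yields $C_{G \ast H}(z) = C_G(z) = G$.

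With the lemma in hand the reduction is immediate. A solution to $\mathcal E'$ in $G \ast H$ is a tuple $(g_1, \ldots, g_n) \in (G \ast H)^n$ satisfying the equations of $\mathcal E$ with each $g_i$ commuting with $z$; by the lemma each $g_i \in G$, so the tuple is in fact a solution to $\mathcal E$ in $G$. Conversely, every solution of $\mathcal E$ in $G$ is a solution of $\mathcal E'$ in $G \ast H$, since the added constraints $[g_i, z] = 1$ hold automatically in $G$. Thus $\mathcal E$ is solvable over $G$ if and only if $\mathcal E'$ is solvable over $G \ast H$, which is exactly the required reduction, and hence $G \ast H$ has undecidable Diophantine problem. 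The only non-routine ingredient is the centraliser computation in the free product; everything else is bookkeeping, and I expect the Bass--Serre argument (or an equivalent normal-form argument) to be the single point requiring genuine care.
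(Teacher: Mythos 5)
Your proposal is correct and follows essentially the same route as the paper: both identify a non-trivial central element of $G$, observe that its centraliser in $G \ast H$ is exactly $G$, and use this to equationally define $G$ inside $G \ast H$ and transfer systems of equations. The paper simply asserts the centraliser computation, whereas you supply a Bass--Serre justification and spell out the reduction, but the underlying argument is identical.
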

\begin{proof} 
We have $Z(G) = \langle h \rangle \neq 1$, and  
\[
C_{G \ast H}(h) = C_G(h) = G.
\] 
Thus $G$ is equationally definable in $G \ast H$, completing the proof.  
  \end{proof}
\begin{cor} 
There are 3-manifold groups with undecidable Diophantine problem that are 
not virtually nilpotent and are not Seifert $3$-manifold groups. 
  \end{cor}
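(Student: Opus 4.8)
The plan is to feed the preceding proposition a concrete pair $(G,H)$ for which the free product $G \ast H$ is transparently a $3$-manifold group while failing to be either virtually nilpotent or Seifert. First I would take $G$ to be the integral Heisenberg group, which is exactly the Seifert $3$-manifold group of \cref{o1-1} in the case $b = 1$ (type (o1), $(1;)$); by that lemma it has undecidable Diophantine problem, and a glance at its proof shows $Z(G) = \langle h \rangle \cong \Z \neq 1$, so the hypotheses of the preceding proposition are met. For $H$ I would take any nontrivial $3$-manifold group, the simplest choice being $H = \Z = \pi_1(S^1 \times S^2)$. The preceding proposition then immediately gives that $G \ast H$ has undecidable Diophantine problem.

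Next I would check that $G \ast H$ is itself a $3$-manifold group. Writing $G = \pi_1(M_G)$ for a closed Seifert fibered space $M_G$ and $H = \pi_1(M_H)$ for a closed $3$-manifold $M_H$, the connected sum $M_G \# M_H$ is a closed $3$-manifold, and van Kampen's theorem yields $\pi_1(M_G \# M_H) \cong \pi_1(M_G) \ast \pi_1(M_H) = G \ast H$. Thus $G \ast H$ is realised as a $3$-manifold group.

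It remains to establish the two negative properties. For ``not virtually nilpotent'' I would note that, since $G$ is infinite, the free product $G \ast H$ is certainly not the infinite dihedral group $C_2 \ast C_2$, and every nontrivial free product other than $C_2 \ast C_2$ contains a nonabelian free subgroup; as virtually nilpotent groups have polynomial growth they contain no nonabelian free subgroup, so $G \ast H$ is not virtually nilpotent. For ``not a Seifert $3$-manifold group'' I would argue via ends: being a nontrivial free product distinct from $C_2 \ast C_2$ (with both factors nontrivial), $G \ast H$ has infinitely many ends. On the other hand, the classification recorded in \cref{Seifert-pres} and \cref{Seifert-Euler-qts} shows that every infinite Seifert $3$-manifold group either is virtually cyclic (at most two ends) or carries a normal cyclic subgroup $\langle h \rangle$ with infinite $2$-orbifold quotient and is therefore one-ended; in no case does it have infinitely many ends. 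Hence $G \ast H$ is not a Seifert $3$-manifold group, and varying $H$ (e.g.\ over the free groups $F_n = \pi_1(\#_n \, S^1 \times S^2)$) produces infinitely many such examples.

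I expect the main obstacle to be the final ``not Seifert'' step, specifically nailing down the precise dichotomy that an infinite Seifert $3$-manifold group has at most two ends unless it is one-ended, so that infinitely many ends is genuinely incompatible with being Seifert. This is where one must either work carefully through the end counts case by case using \cref{Seifert-Euler-qts} (finite or virtually $\Z$ quotient versus one-ended central-type extension), or instead invoke the Kneser--Milnor prime decomposition together with the fact that Seifert fibered spaces are prime apart from $\mathbb{RP}^3 \# \mathbb{RP}^3$, whose group is the excluded $C_2 \ast C_2$; either route gives the conclusion.
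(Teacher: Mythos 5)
Your proposal is correct and follows the same overall architecture as the paper's proof: instantiate the preceding proposition with a concrete $G$ (any of the undecidable Seifert groups works; the paper does not even fix one), realise $G \ast H$ as a $3$-manifold group via closure of the class under free products (your connected-sum/van Kampen justification is exactly why that closure holds), and rule out virtual nilpotence via a nonabelian free subgroup. The only genuine divergence is the final ``not Seifert'' step. The paper argues directly: an infinite Seifert $3$-manifold group has an infinite cyclic \emph{normal} subgroup $\langle h \rangle$ by \cite{BoyerRolfsenWiest}*{Proposition 4.1(1)}, and a nontrivial free product with an infinite factor has no such subgroup --- contradiction. You instead count ends (or invoke Kneser--Milnor), which is also valid but, as you anticipate, forces you to supply the extra standard fact that a finitely generated group with an infinite finitely generated normal subgroup of infinite quotient is one-ended, plus the two-ended analysis in the virtually cyclic case. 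The paper's route is slightly more economical here because the normal-cyclic-subgroup obstruction is already the structural fact it has been using throughout (it is the subgroup $\langle h \rangle$ from Theorem~\ref{Seifert-pres}), and the incompatibility with a nontrivial free product is immediate. Either argument is sound; if you keep the ends version, make sure to actually cite or prove the one-endedness lemma rather than leaving it as the acknowledged gap.
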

\begin{proof} 
Take $H$ to be any non-trivial $3$-manifold group in the previous proposition. 
Since the class of $3$-manifold groups is closed under free products the group $G \ast H$ is a $3$-manifold group.   
Then $G \ast H$ contains $F_2$ so is not virtually nilpotent. 
The group $G \ast H$ cannot be a Seifert fibered manifold group since if it were since the group $G \ast H$ is not finite it would follow from \cite{BoyerRolfsenWiest}*{Proposition 4.1(1)} that 
that $G \ast H$ has an infinite cyclic normal subgroup
(corresponding to the subgroup generated by $h$ in the statement of Theorem~\ref{Seifert-pres}) 
which is a contradiction since $G$ and $H$ are both non-trivial groups.     
  \end{proof}

The results in this paper give an up to index 2 classification of Seifert $3$-manifold groups with decidable Diophantine problem and first-order theory. It would be interesting to see whether these results could be extended to the graph manifolds in the sense of e.g. \cite{neumann2007graph}.   
Also note that for any forest $F$ the right-angled Artin group $A(F)$ is a $3$-manifold group \cite{droms1987graph}. Since right-angled Artin groups all have decidable Diophantine problem all these $3$-manifold groups also do. 

\bibliography{references}
\bibliographystyle{abbrv}
\end{document}